\DeclareFontFamily{OT1}{manual}{}
\DeclareFontShape{OT1}{manual}{m}{n}{ <10> manfnt }{}
\newcommand*{\ch}{\mathrm{char}}
\newcommand*{\sub}{\underline{s}}
\newcommand*{\sgn}{\mathrm{sgn}}
\newcommand*{\<}{\langle}
\renewcommand*{\>}{\rangle}
\newcommand*{\x}{\times}
\newcommand*{\snq}{\langle 1\rangle\perp n \times q}
\newcommand*{\snlq}{\langle 1\rangle\perp (n-1) \times q}
\newcommand{\leqs}{\leqslant}
\newcommand{\geqs}{\geqslant}
\newtheorem{lemma}{Lemma}[section]
\newtheorem{theorem}[lemma]{Theorem}
\newtheorem{prop}[lemma]{Proposition}
\newtheorem{cor}[lemma]{Corollary}
\theoremstyle{definition}
\theoremstyle{remark}
\newtheorem{remark}[lemma]{Remark}
\theoremstyle{definition}
\newtheorem{example}[lemma]{Example}
\theoremstyle{definition}
\newtheorem{question}[lemma]{Question}
\title{Isotropy of multiples of Pfister forms and weak isotropy of forms over field extensions}
\author{James O'Shea}
\address{James O'Shea,\newline School of Mathematical Sciences, University College Dublin, Dublin 4, Ireland.\newline E-mail: james.oshea@ucd.ie}
\date{}
\begin{document}

%
%

\maketitle

\begin{abstract}\noindent The isotropy of multiples of Pfister forms is studied. In particular, an improved lower bound on the value of their first Witt index is obtained. This result and certain of its corollaries are applied to the study of the weak isotropy index (or equivalently, the sublevel) of arbitrary quadratic forms. The relationship between this invariant and the level of the (quadratic) form is investigated. The problem of determining the set of values of the weak isotropy index of a form with respect to field extensions is addressed, with both admissible and inadmissible integers being determined. The analogous investigation with respect to the level of a form is also undertaken, with some questions asked by Berhuy, Grenier-Boley and Mahmoudi being resolved. An examination of the weak isotropy index and the level of round and Pfister forms concludes this article.
\end{abstract}

\section{Introduction}

The subject matter of this article is drawn from three distinct, but interrelated, topics within quadratic form theory: the isotropy of multiples of Pfister forms; the weak isotropy index of forms; the level of forms.

Given the central role of Pfister forms within the theory of quadratic forms, the isotropy behaviour of their products with other forms has been a topic of long-standing interest. In particular, given forms $\pi$ and $q$ over a field $F$ such that $\pi$ is similar to an anisotropic Pfister form, in the case where the product of $\pi$ and $q$ is isotropic over $F$, it has long been known that the Witt index of this product is a multiple of the dimension of $\pi$. Thus, if the product of $\pi$ and $q$ is anisotropic over $F$, it follows that the Witt index of this product over the generic extension that makes it isotropic (its first Witt index) is at least the dimension of $\pi$. 

Our main result in the opening section of this article, Theorem~\ref{i1bound}, improves upon this lower bound, establishing that the first Witt index of the product of $\pi$ and $q$ is at least the first Witt index of $q$ times the dimension of $\pi$. Moreover, in Proposition~\ref{i1maxspl}, we determine that this inequality is actually an equality in the case where the form $q$ has maximal splitting, and thus, as a result, we can conclude that maximal splitting is preserved with respect to taking products with Pfister forms.



A form $q$ over $F$ is weakly isotropic if there is $n\in\mathbb{N}\setminus \{ 0\}$ such that the orthogonal sum of $n$ copies of $q$ is isotropic over $F$. The \emph{weak isotropy index} of a form $q$, denoted $wi(q)$, as introduced by Becher in \cite{B}, is the least such integer $n$ if $q$ is weakly isotropic, and is infinite otherwise. In \cite{BGBM}, Berhuy, Grenier-Boley and Mahmoudi defined the \emph{$q$-sublevel of $F$}, denoted $\sub_q(F)$, to be the least $n\in\mathbb{N}$ such that the orthogonal sum of $n+1$ copies of $q$ is isotropic over $F$, and to be infinite if no such $n$ exists. Clearly, we have that $\sub_q(F)=wi(q)-1$. Throughout this article, our preference will be to write in terms of the $q$-sublevel, bearing in mind that all such results can immediately be reformulated in terms of the weak isotropy index of $q$.

We open our study of the $q$-sublevel with an examination of its relationships to other invariants, before moving on to explore its behaviour with respect to field extensions. In particular, for $q$ a form over $F$, we seek to determine the entries of the set $\{\sub_q(K)\mid K/F \textrm{ field extension}\}$. Drawing upon known results regarding isotropy over function fields of forms, we establish criteria for the containment of integers within this set. Without placing any restrictions on $q$, we can determine some entries of the set $\{\sub_q(K)\mid K/F \textrm{ field extension}\}$. Moreover, restricting to forms $q$ of a specific type (such as forms of $2$-power dimension, forms with maximal splitting, forms of certain signature, etc.), we can identify additional entries of the set. Indeed, for certain forms $q$, we obtain a complete determination of the set $\{\sub_q(K)\mid K/F \textrm{ field extension}\}$. In the complementary direction, without placing any restrictions on the form $q$, we invoke our aforementioned lower bound on the first Witt index of multiples of Pfister forms to show that all the integers contained in certain intervals do not belong to the set $\{\sub_q(K)\mid K/F \textrm{ field extension}\}$ (see Theorem~\ref{inadmissiblesub}). Moreover, we show that these intervals of inadmissible integers cannot be extended in either direction.



Letting $R$ be a non-trivial ring, the \emph{level of $R$}, denoted $s(R)$, is the least positive integer $n$ such that $-1$ is a sum of $n$ squares in $R$ if such an $n$ exists, and is infinite otherwise. Interest in this invariant first arose on account of the Artin-Schreier theorem, which states that a field $F$ has an ordering if and only if $s(F)=\infty$, and its behaviour with respect to various classes of rings continues to be a topic of study. In \cite{BGBM}, Berhuy, Grenier-Boley and Mahmoudi introduced the concept of the level of a field with respect to a form $q$, or the $q$-level, as a generalisation of the level, and undertook a wide-ranging investigation of this invariant. For $q$ a form over $F$, the \emph{$q$-level of $F$}, denoted $s_q(F)$, is the least $n\in\mathbb{N}\setminus \{0\}$ such that the orthogonal sum of $n$ copies of $q$ represents $-1$ over $F$, and is infinite if no such $n$ exists. Clearly, one recovers the level of a field by considering the $q$-level with respect to the form $\< 1\>$. As was the case with the $q$-sublevel, we interest ourselves in the behaviour of this invariant with respect to field extensions, seeking a determination of the set $\{s_q(K)\mid K/F \textrm{ field extension}\}$. In this regard, we establish analogues of our results with respect to the $q$-sublevel, establishing criteria for containment within this set; identifying elements of this set common to all forms; determining additional (and sometimes, all) entries of this set with respect to specific forms; showing that certain integers are not entries of this set with respect to specific forms. Consequently, we can answer some questions raised in \cite{BGBM}. In particular, for all $n\in\mathbb{N}\setminus \{0\}$, in Theorem~\ref{svalues} we establish the existence of an $n$-dimensional form $q$ over an ordered field $F$ that can take any prescribed positive integer as its $q$-level over a suitable extension, answering \cite[Question $6.1$]{BGBM}.

In the final section of this article, the $q$-sublevel and $q$-level are considered with respect to round forms and Pfister forms, classes of forms for which these invariants are known to coincide. Addressing remarks in \cite{BGBM}, we compare and contrast the behaviour of the $q$-sublevel with respect to these classes, showing that certain of their results can be extended from the class of Pfister forms to round forms, whereas others cannot. We conclude with a treatment of the $q$-sublevel with respect to function fields of quadratic forms in the case where $q$ is a Pfister form.

Henceforth, we will let $F$ denote a field of characteristic different from two (indeed, if $\ch(F)=2$ then every anisotropic quadratic form $q$ over $F$ satisfies $\sub_q(F)=1$). The term ``form'' will refer to a regular quadratic form. Every form over $F$ can be diagonalised. Given $a_1,\ldots ,a_n\in F^{\x}$ for $n\in\mathbb{N}\setminus \{ 0\}$, one denotes by $\<a_1,\ldots,a_n\>$ the $n$-dimensional quadratic form $a_1X_1^2+\ldots+a_nX_n^2$. If $p$ and $q$ are forms over $F$, we denote by $p\perp q$ their orthogonal sum and by $p\otimes q$ their tensor product. For $n\in\mathbb{N}$, we will denote the orthogonal sum of $n$ copies of $q$ by $n\x q$. We use $aq$ to denote $\<a\>\otimes q$ for $a\in F^{\x}$. We write $p\simeq q$ to indicate that $p$ and $q$ are isometric, and say that $p$ and $q$ are \emph{similar} (over $F$) if $p\simeq aq$ for some $a\in F^{\x}$. For $q$ a form over $F$ and $K/F$ a field extension, we will view $q$ as a form over $K$ via the canonical embedding. A form $p$ is a \emph{subform of $q$} if $q\simeq p\perp r$ for some form $r$, in which case we will write $p\subset q$. A form $q$ \emph{represents $a\in F$} if there exists a vector $v$ such that $q(v)=a$. We denote by $D_F(q)$ the set of values in $F^{\x}$ represented by $q$. A form over $F$ is \emph{isotropic} if it represents zero non-trivially, and \emph{anisotropic} otherwise. A form $q$ over $F$ is \emph{universal} if $D_F(q)=F^{\x}$. In particular, isotropic forms are universal \cite[Theorem I.$3.4$]{LAM}. Every form $q$ has a decomposition $q\simeq q_{\mathrm{an}}\perp i_W(q)\x\<1,-1\>$ where the anisotropic form $q_{\mathrm{an}}$ and the integer $i_W(q)$ are uniquely determined. A form $q$ is \emph{hyperbolic} if $q_{\mathrm{an}}$ is trivial, whereby $i_W(q)=\frac 1 2 \dim q$. Two anisotropic forms $p$ and $q$ over $F$ are \emph{isotropy equivalent} if for every field extension $K/F$ we have that $p_K$ is isotropic if and only if $q_K$ is isotropic. The following basic fact (see \cite[Exercise I.$16$]{LAM}) will be employed frequently. 
\begin{lemma}\label{star} If $\tau\subset\varphi$ with $\dim\tau\geqs\dim\varphi-i_W(\varphi)+1$, then $\tau$ is isotropic.\end{lemma}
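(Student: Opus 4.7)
The plan is to argue by contradiction, using the ambient quadratic space and the existence of a maximal totally isotropic subspace of dimension $i_W(\varphi)$.

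More precisely, let $(V,\varphi)$ denote the quadratic space underlying $\varphi$, and let $W\subseteq V$ be the subspace on which the restriction of $\varphi$ is the subform $\tau$. By the Witt decomposition of $\varphi$, there exists a totally isotropic subspace $T\subseteq V$ with $\dim T = i_W(\varphi)$. Suppose, toward a contradiction, that $\tau$ is anisotropic. The intersection $W\cap T$ is a subspace of $T$ and is therefore totally isotropic with respect to $\varphi$; but $W\cap T\subseteq W$, so it is also totally isotropic with respect to $\varphi|_W=\tau$. Since $\tau$ is anisotropic, this forces $W\cap T=0$.

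Now apply the dimension formula:
\[
\dim(W+T)=\dim W+\dim T-\dim(W\cap T)=\dim\tau+i_W(\varphi).
\]
Since $W+T\subseteq V$, we obtain $\dim\tau+i_W(\varphi)\leqs\dim\varphi$, i.e.\ $\dim\tau\leqs\dim\varphi-i_W(\varphi)$. This directly contradicts the hypothesis $\dim\tau\geqs\dim\varphi-i_W(\varphi)+1$, so $\tau$ must be isotropic.

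There is no real obstacle here: the argument is entirely linear-algebraic once one observes that any maximal totally isotropic subspace of $\varphi$ realises the Witt index, and that its intersection with $W$ automatically provides a totally isotropic subspace of $\tau$. The only point that requires a moment's care is the identification of $W\cap T$ as totally isotropic for $\tau$ (rather than just for $\varphi$), which is immediate from the fact that $\tau$ is literally the restriction $\varphi|_W$.
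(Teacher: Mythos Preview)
Your proof is correct. The paper does not actually supply a proof of this lemma; it merely cites it as a basic fact, referring to \cite[Exercise I.$16$]{LAM}. Your argument---intersecting the subspace carrying $\tau$ with a maximal totally isotropic subspace of $\varphi$ and applying a dimension count---is the standard way to solve that exercise, so there is nothing to compare against here.
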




An \emph{ordering of $F$} is a set $P\subset F^{\x}$ such that $P\cup -P=F^{\x}$ and $x+y, xy\in P$ for all $x,y\in P$. We will let $X_F$ denote the space of orderings of $F$. If $X_F$ is non-empty, we say that $F$ is a \emph{formally real field}. An element $a\in F^{\x}$ is \emph{totally positive} if $a\in P$ for all $P\in X_F$, which is the case if and only if $a$ is a sum of squares in $F^{\x}$, denoted $a\in\sum {F^{\x}}^2$. For $a\in\sum {F^{\x}}^2$, the \emph{length of $a$}, $\ell_F(a)$, is the least number of squares in $F^{\x}$ that sum to $a$ (we set $\ell_F(a)=\infty$ if $a\notin \sum {F^{\x}}^2$). The \emph{Pythagoras number of $F$} is $p(F)=\sup\{ \ell_F(a)\mid a\in\sum {F^{\x}}^2\}$. Given a form $q$ over $F$ and an ordering $P\in X_F$, the \emph{signature of $q$ at $P$}, denoted $\sgn_P(q)$, is the number of coefficients in a diagonalisation of $q$ that are in $P$ minus the number that are not in $P$. A form $q$ over $F$ is \emph{indefinite at $P\in X_F$} if $|\sgn_P(q)|<\dim q$. For $F$ a field without orderings, the \emph{$u$-invariant of $F$} is $u(F)=\sup\{\dim q\mid q\textrm{ is an anisotropic form over }F\}$. For $n,m\in\mathbb{N}$ with $m$ positive, we will often invoke the identity $\left\lceil\frac n m\right\rceil=\left\lfloor\frac {n-1} {m}\right\rfloor+1$.

For $n\in\mathbb{N}\setminus \{ 0\}$, an \emph{$n$-fold Pfister form} over $F$ is a form isometric to $\<1,a_1\>\otimes\ldots\otimes\<1,a_n\>$ for some $a_1,\ldots ,a_n\in F^{\times}$ (the form $\< 1\>$ is the $0$-fold Pfister form). Isotropic Pfister forms are hyperbolic \cite[Theorem X.1.7]{LAM}. A form $\tau$ over $F$ is a \emph{neighbour} of a Pfister form $\pi$ if $\tau\subset a\pi$ for some $a\in F^{\x}$ and $\dim{\tau}>\frac 1 2 \dim\pi$. An anisotropic form $q$ is isotropy equivalent to a Pfister form $\pi$ if and only if $q$ is a neighbour of $\pi$ \cite[Proposition 2]{H3}. A form $q$ over $F$ is a \emph{group form} if $D_F(q)$ is a subgroup of $F^{\x}$. A form $q$ over $F$ is \emph{round} if $D_F(q)=\{a\in F^{\x}\mid aq\simeq q\}$, the group of similarity factors of $q$. Pfister forms are round, see \cite[Theorem X.1.8]{LAM}. Indeed, Witt's Round Form Theorem \cite[Theorem X.$1.14$]{LAM} states that the product of a Pfister form and a round form is round.

For a form $q$ over $F$ with $\dim q=n\geqs 2$ and $q\not\simeq\<1,-1\>$, the \emph{function field $F(q)$ of $q$} is the quotient field of the integral domain $F[X_1,\ldots ,X_n]/(q(X_1,\ldots ,X_n))$ (this is the function field of the affine quadric $q(X)=0$ over $F$). To avoid case distinctions, we set $F(q)=F$ if $\dim q\leqs 1$ or $q\simeq\<1,-1\>$. The integer $i_W(q_{F(q)})$ (which is positive for all forms $q$ of dimension greater than one) is called the \emph{first Witt index of $q$}, and is denoted by $i_1(q)$. For all forms $p$ over $F$ and all extensions $K/F$ such that $q_K$ is isotropic, we have that $i_W(p_{F(q)})\leqs i_W(p_K)$ (see \cite[Proposition 3.1 and Theorem 3.3]{Kn1}). In particular, we have that $i_1(q)\leqs i_W(q_K)$ for all extensions $K/F$ such that $q_K$ is isotropic. An anisotropic form $q$ of dimension $2^n+m$, where $n\geqs 0$ and $1\leqs m\leqs 2^n$, is said to have \emph{maximal splitting} if $i_1(q)=m$. As per \cite[Theorem X.$4.1$]{LAM}, $F(q)$ is a purely-transcendental extension of $F$ if and only if $q$ is isotropic over $F$. On account of this fact, one can see that two anisotropic forms $p$ and $q$ over $F$ are \emph{isotropy equivalent} if and only if $p_{F(q)}$ and $q_{F(p)}$ are isotropic (if $p_{F(q)}$ is isotropic, then $p_{K(q)}$ is isotropic for all $K/F$ such that $q_K$ is isotropic, whereby $p_{K}$ is isotropic since $K(q)/K$ is a purely-transcendental extension). The behaviour of orderings with respect to function field extensions is governed by the following result due to Elman, Lam and Wadsworth \cite[Theorem 3.5]{ELW} and, independently, Knebusch \cite[Lemma 10]{GS}.
\begin{theorem}\label{ELW} Let $q$ be a form over a formally real field $F$ such that $\dim q\geqs 2$. Then $P\in X_F$ extends to $F(q)$ if and only if $q$ is indefinite at $P$.
\end{theorem}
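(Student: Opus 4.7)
For the forward direction $(\Rightarrow)$, I would first dispose of the trivial case $q \simeq \<1,-1\>$, in which $F(q) = F$ and $q$ has zero signature at every ordering. Otherwise, $q_{F(q)}$ is isotropic by the construction of $F(q)$, so if $\tilde{P} \in X_{F(q)}$ restricts to $P$ on $F$, then a diagonalisation of $q$ having its coefficients in $F$ ensures $\sgn_{\tilde{P}}(q_{F(q)}) = \sgn_P(q)$. Since an isotropic form over any ordered field cannot be positive or negative definite, $|\sgn_{\tilde{P}}(q_{F(q)})| < \dim q$, giving the indefiniteness of $q$ at $P$.

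For the converse $(\Leftarrow)$, suppose $q$ is indefinite at $P$, and let $F_P$ denote a real closure of $(F, P)$. Signatures being preserved under scalar extension, $q_{F_P}$ remains indefinite over the real closed field $F_P$, and is therefore isotropic (by the intermediate value theorem applied to $q$ viewed as a polynomial map on $F_P^n$). The strategy is then to produce an $F$-place $\lambda \colon F(q) \to F_P \cup \{\infty\}$ extending the inclusion $F \hookrightarrow F_P$; pulling back the unique ordering on $F_P$ through $\lambda$ will yield an ordering on $F(q)$ restricting to $P$. An isotropic vector of $q_{F_P}$ determines a smooth $F_P$-rational point $v$ of the projective quadric $Q = \{q = 0\}$, whose local ring $\mathcal{O}_{Q,v}$ is regular of dimension $\dim q - 2$ with residue field $F_P$; dominating $\mathcal{O}_{Q,v}$ by a valuation ring of $F(q)/F$ with the same residue field $F_P$ (via a chain of specialisations along a regular system of parameters at $v$) produces the required $\lambda$.

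The principal obstacle lies in the converse: executing the valuation-theoretic construction of $\lambda$ and verifying that the pulled-back ordering on $F(q)$ truly restricts to $P$ on $F$. A more self-contained route, avoiding explicit place-theoretic machinery, would proceed by contradiction via the Artin--Schreier sum-of-squares criterion for extending orderings. Were $P$ not to extend, one could write $-1 = \sum_{i=1}^{m} p_i f_i^2$ in $F(q)$ with each $p_i \in P$, so that the form $\<1, p_1, \ldots, p_m\>$, which is $F$-anisotropic and positive definite at $P$, would become isotropic over $F(q)$; a subform-theoretic argument (in the spirit of Cassels--Pfister) would then force $q$ to be similar to a subform of a $P$-positive form, contradicting the hypothesised indefiniteness of $q$ at $P$.
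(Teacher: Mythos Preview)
The paper does not supply its own proof of this theorem: it is quoted as a background result, with attribution to Elman--Lam--Wadsworth \cite[Theorem 3.5]{ELW} and independently Knebusch \cite[Lemma 10]{GS}, and is then used as a tool in later arguments. There is therefore nothing in the paper to compare your proposal against.

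On the substance of your sketch: the forward direction is correct as written. For the converse, your first route via an $F$-place $F(q)\to F_P\cup\{\infty\}$ arising from a smooth $F_P$-point of the quadric is exactly Knebusch's argument, and it goes through. Your second route is in the spirit of the Elman--Lam--Wadsworth proof, but one step is imprecise: the form $\<1,p_1,\ldots,p_m\>$ is not in general a Pfister form, so the Cassels--Pfister subform theorem does not apply to it directly. The fix is standard --- embed $\<1,p_1,\ldots,p_m\>$ into the Pfister form $\pi=\<1,p_1\>\otimes\cdots\otimes\<1,p_m\>$, which is $P$-definite (hence anisotropic over $F$) and becomes isotropic, hence hyperbolic, over $F(q)$; then the subform theorem yields $aq\subset\pi$ for some $a\in F^\times$, forcing $q$ to be $P$-definite and giving the desired contradiction. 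With that adjustment your second approach is complete.
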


%
%


\cite[Theorem 1]{H3} and \cite[Theorem 4.1]{KM} represent important isotropy criteria for function fields of quadratic forms. We will regularly invoke these results throughout this article, and therefore recall them below.


\begin{theorem}\label{H95} $($Hoffmann$)$ Let $p$ and $q$ be forms over $F$ such that $p$ is anisotropic. If $\dim p\leqs 2^n<\dim q$ for some $n\in\mathbb{N}$, then
$p_{F(q)}$ is anisotropic.\end{theorem}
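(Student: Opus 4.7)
I would argue by induction on $n \in \mathbb{N}$. The base case $n=0$ is trivial: then $\dim p = 1$ and any nonzero one-dimensional form remains anisotropic over every extension of $F$.

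For the inductive step, suppose the statement holds for $n-1$. If $\dim p \leqs 2^{n-1}$, then since $\dim q > 2^n > 2^{n-1}$ the inductive hypothesis applied with exponent $n-1$ immediately gives that $p_{F(q)}$ is anisotropic. One is thus reduced to the range $2^{n-1} < \dim p \leqs 2^n < \dim q$, where the conclusion is no longer accessible by a direct dimensional argument.

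Supposing for contradiction that $p_{F(q)}$ is isotropic, the plan is to combine Knebusch's specialisation lemma with a careful analysis of the generic splitting tower of $q$ in order to manufacture an auxiliary field extension $L/F$, a place $\lambda \colon F(q) \rightharpoonup L$ over $F$, and an anisotropic $(n+1)$-fold Pfister form $\pi$ over $L$ such that $p_L$ embeds (up to scalar) as a subform of $\pi$. Since $\dim p_L = \dim p > 2^{n-1}$ while $\tfrac12 \dim \pi = 2^n \geqs \dim p_L$ only when $\dim p = 2^n$, the construction is arranged so that $p_L$ has dimension strictly greater than $\tfrac12 \dim \pi$, realising $p_L$ as a Pfister neighbour of $\pi$. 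Specialising the isotropy of $p_{F(q)}$ through $\lambda$ then yields isotropy of $p_L$, and by the isotropy-equivalence of a Pfister form with each of its neighbours (recalled in the introduction), it follows that $\pi_L$ is isotropic, hence hyperbolic as $\pi$ is Pfister. This contradicts the prior anisotropy of $\pi$ over $L$ and closes the induction.

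The principal obstacle is the simultaneous construction of $L$, $\lambda$, and $\pi$: both dimensional hypotheses must be used with precision. The lower bound $\dim p > 2^{n-1}$ is precisely what permits $p_L$ to be realised as a Pfister neighbour of an $(n+1)$-fold form, while the upper bound $\dim q > 2^n$ is invoked to prevent $q$ from being similar to a subform of $\pi$ itself, which would otherwise force $\pi$ to split over the residue field of the place $\lambda$ and destroy the intended contradiction. Navigating this compatibility, via generic splitting towers and the structure of Pfister forms over function fields of quadrics, is the technical heart of the argument.
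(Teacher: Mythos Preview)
The paper does not provide a proof of this statement: Theorem~\ref{H95} is quoted from Hoffmann's paper \cite{H3} and used as a black box throughout, so there is no argument here to compare against.

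As for your outline, there is a genuine dimensional error. You propose to realise $p_L$ as a Pfister neighbour of an $(n+1)$-fold Pfister form $\pi$, and you write $\tfrac12\dim\pi = 2^n$. But being a Pfister neighbour of $\pi$ requires $\dim p_L > \tfrac12\dim\pi = 2^n$, whereas by hypothesis $\dim p_L = \dim p \leqs 2^n$. Your parenthetical ``$\tfrac12\dim\pi = 2^n \geqs \dim p_L$ only when $\dim p = 2^n$'' is simply false: the inequality $2^n \geqs \dim p$ holds for every $p$ under consideration, not just those of dimension $2^n$. No ``arranging of the construction'' can repair this, since dimension is preserved under scalar extension.

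If you intended $\pi$ to be an $n$-fold Pfister form (so that $\tfrac12\dim\pi = 2^{n-1}$, matching your reduction to the range $\dim p > 2^{n-1}$), the numbers become consistent, but then the outline is essentially content-free: you now need to construct a field $L$, a place $F(q)\rightharpoonup L$, and an \emph{anisotropic} $n$-fold Pfister form $\pi$ over $L$ containing a scalar multiple of $p_L$, all simultaneously. This construction is exactly the substance of Hoffmann's theorem, and phrases such as ``the construction is arranged so that'' and ``navigating this compatibility \ldots\ is the technical heart of the argument'' are restatements of what must be proved rather than steps toward a proof.
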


%
%

\begin{theorem}\label{km} $($Karpenko, Merkurjev$)$ Let $p$ and $q$ be anisotropic forms over $F$ such that $p_{F(q)}$ is isotropic. Then \begin{enumerate}[$(i)$]
\item $\dim p - i_1(p)\geqs\dim q - i_1(q)$;
\vspace{.1cm}
\item $\dim p - i_1(p)=\dim q - i_1(q)$ if and only if $q_{F(p)}$ is isotropic.\end{enumerate}\end{theorem}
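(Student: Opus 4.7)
The plan is to reduce the theorem to a statement about the underlying projective quadrics $X_p$ and $X_q$ of $p$ and $q$ and then invoke the theory of Chow groups (modulo 2) and upper motives of quadrics. The hypothesis that $p_{F(q)}$ is isotropic translates geometrically to the existence of a rational map $X_q \dashrightarrow X_p$ defined over $F$, equivalently a correspondence of odd (in fact degree one) degree from $X_q$ to $X_p$. The quantities $\dim p - i_1(p)$ and $\dim q - i_1(q)$ that appear in the statement are precisely the dimensions of the anisotropic kernels in the first step of the respective Knebusch splitting towers, which is why one should expect them to be the natural birational-geometric invariants on which the inequality rests.

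For part $(i)$, the plan would be to invoke Karpenko's theorem on the structure of the modulo-$2$ Chow group of a smooth projective quadric, together with the action of the Steenrod operations. The key observation is that for an anisotropic form $q$, the integer $\dim q - i_1(q)$ controls the codimension of the highest-dimensional cycle in $\mathrm{CH}(X_q)/2$ that is not rational (i.e.\ does not come from a cycle over $F$). The existence of a correspondence $X_q \rightsquigarrow X_p$ forces the rational cycles on $X_q$ to push forward to rational cycles on $X_p$ that control enough of $\mathrm{CH}(X_p)/2$; applying a Steenrod operation and comparing codimensions then yields $\dim p - i_1(p) \geq \dim q - i_1(q)$. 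This is the main technical obstacle, and it is where the heavy machinery enters — there is, as far as I can see, no way to obtain this inequality by manipulating only the elementary tools recalled in the introduction (isometry, Pfister neighbours, Lemma~\ref{star}, Theorem~\ref{H95}).

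For part $(ii)$, one implication is formal: if both $p_{F(q)}$ and $q_{F(p)}$ are isotropic, then applying $(i)$ symmetrically in both directions pins the two quantities to be equal. For the converse, assuming equality of $\dim p - i_1(p)$ and $\dim q - i_1(q)$, the plan is to upgrade the above cycle-theoretic argument: equality in the codimension count forces the upper motive of $X_p$ to be isomorphic to the upper motive of $X_q$, via mutually inverse correspondences produced by the same Chow-theoretic machinery. This motivic isomorphism, transported back into quadratic form theory, then yields an $F$-correspondence $X_p \rightsquigarrow X_q$ of odd degree and hence the isotropy of $q$ over $F(p)$. The subtle point is that one needs not merely existence of a cycle but control of its multiplicity modulo $2$, which again is provided by Karpenko's incompressibility framework.

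Thus, the main obstacle — and the reason this theorem is cited rather than proved in the present article — is that the proof is not internal to the birational/algebraic theory of quadratic forms: it essentially requires the full Chow-theoretic (or motivic) analysis of smooth projective quadrics developed by Karpenko, extended by Karpenko and Merkurjev for the symmetric equality statement in $(ii)$. A self-contained proof would require either setting up this machinery from scratch or taking the principal structural results on $\mathrm{CH}(X_q)/2$ and the upper motive of $X_q$ as a black box.
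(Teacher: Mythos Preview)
The paper does not prove this theorem at all: Theorem~\ref{km} is quoted from \cite[Theorem 4.1]{KM} as a black box and then used repeatedly, so there is no ``paper's own proof'' to compare your proposal against. You correctly recognise this in your final paragraph.

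As a sketch of the actual Karpenko--Merkurjev argument, your outline is accurate in its essentials. The proof in \cite{KM} does proceed by passing to the projective quadrics $X_p$, $X_q$, translating the isotropy hypothesis into a rational map $X_q \dashrightarrow X_p$, and then analysing cycles in the modulo-$2$ Chow groups with the aid of Steenrod operations; the integer $\dim q - i_1(q)$ does indeed arise as the (projective) dimension of the first anisotropic kernel and controls the relevant cycle classes. Your handling of the easy direction of $(ii)$ --- applying $(i)$ symmetrically once both forms become isotropic over each other's function field --- is exactly right. One small anachronism: the language of \emph{upper motives} was systematised by Karpenko somewhat after \cite{KM}; the original argument for the hard direction of $(ii)$ is phrased directly in terms of cycles and their multiplicities rather than via a motivic isomorphism, though the content is the same. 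Your assessment that none of this is accessible through the elementary tools recalled in the introduction (Lemma~\ref{star}, Theorem~\ref{H95}) is also correct, which is precisely why the present article cites the result rather than proving it.
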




\section{The isotropy of multiples of Pfister forms}

Since the isotropy of scalar multiples of Pfister forms is well understood (indeed, an anisotropic form $q$ of dimension at least two is a scalar multiple of a Pfister form if and only if $q$ is hyperbolic over $F(q)$ \cite[Corollary $23.4$]{EKM}), we will restrict our attention to products of Pfister forms with forms of dimension at least two. In his thesis \cite[Th\'eor\`eme 6.4.2]{R}, Roussey established the following.


\begin{theorem}\label{R} $($Roussey$)$ Let $p$ and $q$ be two forms over F of dimension at least two and let $\pi$ be similar to a Pfister form over $F$. If $p$ is isotropic over $F(q)$, then $\pi\otimes p$ is isotropic over $F(\pi\otimes q)$. 
\end{theorem}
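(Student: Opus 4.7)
The plan is to perform standard reductions, transfer the hypothesis from $F(q)$ to a related extension of $L=F(\pi\otimes q)$ via a purely transcendental extension, and then conclude by a case analysis on the behaviour of $\pi$ over $L$.

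Since scalar factors preserve both the relevant function field (up to $F$-isomorphism) and the isotropy of the tensor products at issue, we may assume that $\pi$ is itself a Pfister form. If $\pi$ is isotropic over $F$ then it is hyperbolic, so $\pi\otimes p$ and $\pi\otimes q$ are hyperbolic and the theorem is trivial; we therefore assume $\pi$ is anisotropic over $F$. Similarly, if $q$ is isotropic over $F$, then $F(q)/F$ is purely transcendental by \cite[Theorem X.$4.1$]{LAM}, forcing $p$ to be isotropic over $F$ and hence $\pi\otimes p$ to be isotropic over $F$, as required. So we may assume $q$ is anisotropic over $F$ as well.

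Set $L=F(\pi\otimes q)$. Since $q_{F(q)}$ is isotropic, so is $(\pi\otimes q)_{F(q)}$, and hence $F(q)(\pi\otimes q)/F(q)$ is purely transcendental by \cite[Theorem X.$4.1$]{LAM}. The fields $F(q)(\pi\otimes q)$ and $L(q)=F(\pi\otimes q)(q)$ are isomorphic as $F$-algebras, each being the function field of the product of the quadrics associated to $q$ and to $\pi\otimes q$. Therefore $L(q)/F(q)$ is purely transcendental, and the hypothesis that $p_{F(q)}$ is isotropic forces $p_{L(q)}$ to be isotropic.

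If $\pi_L$ is isotropic, then it is hyperbolic (as a Pfister form), whence $(\pi\otimes p)_L$ is hyperbolic and we are done. In the remaining case, $\pi_L$ is anisotropic while $(\pi\otimes q)_L$ is isotropic, so that the Witt index of $(\pi\otimes q)_L$ is a positive multiple of $\dim\pi$; in particular, $q$ admits over $L$ a two-dimensional subform $\<a,b\>$ for which $(\pi\otimes\<a,b\>)_L$ is hyperbolic, equivalently $-b/a\in D_L(\pi)$.

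The main obstacle is this final case, in which we must combine the isotropy of $p$ over $L(q)$ with the structural information above to deduce the isotropy of $\pi\otimes p$ over $L$. I expect to proceed by induction on $\dim q$. In the base case $\dim q=2$, the form $\pi\otimes q$ is similar to a Pfister form, and the quadratic nature of $F(q)$ combined with the round form theorem \cite[Theorem X.$1.14$]{LAM} should yield the conclusion directly. For the inductive step, writing $q_L\simeq\<a,b\>\perp q'$ and exploiting the Witt equivalence $(\pi\otimes q)_L\sim(\pi\otimes q')_L$ (differing by a hyperbolic summand) should allow a descent of the isotropy hypothesis to a function field of a form of strictly smaller dimension than $q$, at which point the induction hypothesis applies.
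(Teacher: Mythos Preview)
The paper does not supply its own proof of this theorem; it is quoted from Roussey's thesis \cite[Th\'eor\`eme 6.4.2]{R}, so there is no in-paper argument to compare against. That said, your plan deserves an assessment on its own merits.

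Your reductions are sound, and the key transfer step---identifying $L(q)$ with $F(q)(\pi\otimes q)$ and deducing that $p_{L(q)}$ is isotropic from the pure transcendence of $F(q)(\pi\otimes q)/F(q)$---is correct and useful. The base case $\dim q=2$ can indeed be completed along the lines you indicate: writing $q\simeq\<a,b\>$, one has $F(q)$ purely transcendental over $F(\sqrt{-ab})$, so $p$ anisotropic over $F$ contains a subform $c\<1,ab\>$; over $L$ the Pfister form $\pi\otimes\<a,b\>$ is hyperbolic, forcing $-ab\in D_L(\pi)$, whence $\pi\otimes\<1,ab\>$ is isotropic over $L$ and so is $\pi\otimes p$.

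The inductive step, however, has two genuine gaps. First, the claim that $q_L$ admits a binary subform $\<a,b\>$ with $\pi\otimes\<a,b\>$ hyperbolic does not follow from Theorem~\ref{WS}: that theorem produces forms $q_1,q_2$ with $\pi\otimes q\simeq\pi\otimes q_1\perp\pi\otimes q_2$ and $q_2$ hyperbolic, but $q_1\perp q_2$ need not be isometric to $q$ (tensoring with $\pi$ is not cancellative). Second, even granting such a decomposition $q_L\simeq\<a,b\>\perp q'$, the induction does not close. One can indeed show $p_{L(q')}$ is isotropic (since $q_{L(q')}$ is isotropic, making $L(q')(q)/L(q')$ purely transcendental), and then the inductive hypothesis yields $(\pi\otimes p)$ isotropic over $L(\pi\otimes q')$. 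But $(\pi\otimes q')_L$ is the anisotropic kernel of $(\pi\otimes q)_L$ when $i_1(\pi\otimes q)=\dim\pi$ (which occurs, e.g., whenever $i_1(q)=1$), so $L(\pi\otimes q')$ is a proper extension of $L$ and you have not descended to $L$. Iterating only pushes you further up a tower of function fields. A different organising principle---for instance, reducing to $1$-fold Pfister forms by writing $\pi=\<1,t\>\otimes\pi'$ and inducting on the fold, or invoking the motivic/birational machinery of Roussey's thesis---seems necessary to close the argument.
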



In a similar vein to the above, we note that the corresponding result with respect to hyperbolicity also holds, having been established by Fitzgerald \cite[Theorem $3.2$]{F}.

The following example, communicated to me by Karim Becher, demonstrates that the converse of the above theorem does not hold in general.

\begin{example}\label{nocon} Let $q\simeq\< 1,1,1,7\>$ and $\pi\simeq\< 1,1,1,1\>$ over $F=\mathbb{Q}$. Since $\det q\notin \mathbb{F}^2$, the form $q$ is not similar to a $2$-fold Pfister form. Thus, $i_1(q)=1$ by \cite[Corollary $23.4$]{EKM}. Hence, Theorem~\ref{km}~$(i)$ implies that $\< 1,1,1\>$ is anisotropic over $F(q)$. As $7\in D_{F}(\pi)$, we have that $7\< 1,1,1,1\>\simeq\< 1,1,1,1\>$, and thus that $\pi\otimes q\simeq 16\x\< 1\>$. Since $\pi\otimes\< 1,1,1\>$ is a Pfister neighbour of $16\x\< 1\>$, we have that $\pi\otimes\< 1,1,1\>$ is isotropic over $F(\pi\otimes q)$.
\end{example}

While the converse of Theorem~\ref{R} does not generally hold, we can establish it in a restricted setting.

\begin{prop}\label{funnycor} Let $p$ and $q$ be forms over $F$ of dimension at least two such that $\pi\otimes p$ and $\pi\otimes q$ are anisotropic over $F$, where $\pi$ is similar to a Pfister form over $F$. Suppose that $q$ has maximal splitting and that $q$ is isotropic over $F(p)$. If $\pi\otimes p$ is isotropic over $F(\pi\otimes q)$, then $p$ is isotropic over $F(q)$. 
\end{prop}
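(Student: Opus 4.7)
The plan is to combine the Karpenko--Merkurjev theorem applied in two places with the bound from Theorem~\ref{i1bound} and its refinement in Proposition~\ref{i1maxspl} for maximal splitting forms. The key is to squeeze the quantities $\dim p - i_1(p)$ and $\dim q - i_1(q)$ between each other so as to force equality, and then invoke the second part of Theorem~\ref{km} to conclude isotropy of $p$ over $F(q)$.

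First I would observe that since $\pi \otimes p$ and $\pi \otimes q$ are anisotropic, their ``factors'' $p$ and $q$ are anisotropic over $F$ too, so Theorem~\ref{km} may be freely applied. Applying Theorem~\ref{km}$(i)$ to the pair $(q,p)$ (using the hypothesis that $q$ is isotropic over $F(p)$) yields
\[
\dim q - i_1(q) \geqs \dim p - i_1(p).
\]
Next, applying Theorem~\ref{km}$(i)$ to the pair $(\pi\otimes p,\pi\otimes q)$ (using that $\pi\otimes p$ is isotropic over $F(\pi\otimes q)$) yields
\[
\dim(\pi\otimes p)-i_1(\pi\otimes p)\geqs\dim(\pi\otimes q)-i_1(\pi\otimes q).
\]

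Now I would feed in the Pfister-multiplicity bounds. By Theorem~\ref{i1bound}, $i_1(\pi\otimes p)\geqs i_1(p)\cdot\dim\pi$, so the left-hand side of the last inequality is at most $\dim\pi\cdot(\dim p - i_1(p))$. On the other hand, since $q$ has maximal splitting, Proposition~\ref{i1maxspl} gives $i_1(\pi\otimes q)= i_1(q)\cdot\dim\pi$, so the right-hand side is exactly $\dim\pi\cdot(\dim q - i_1(q))$. Dividing by $\dim\pi$, we obtain
\[
\dim p - i_1(p)\geqs\dim q - i_1(q).
\]
Combined with the first inequality this forces equality $\dim p - i_1(p) = \dim q - i_1(q)$.

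Finally, since $q$ is isotropic over $F(p)$ and this numerical equality holds, Theorem~\ref{km}$(ii)$ (applied to the pair $(q,p)$) gives that $p$ is isotropic over $F(q)$, which is the desired conclusion. No step looks like a serious obstacle: the main conceptual content is packaged into Theorem~\ref{i1bound} and Proposition~\ref{i1maxspl}, and the argument is essentially an ``equality in Karpenko--Merkurjev'' squeeze. The only point to check carefully is that dimensions are large enough (at least two) so that all the function-field invocations make sense; this is immediate from the hypotheses $\dim p,\dim q\geqs 2$ and $\dim\pi\geqs 1$.
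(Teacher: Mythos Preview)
Your proof is correct, but it takes a different route from the paper's. The paper argues more directly: since $q$ has maximal splitting, $\dim q - i_1(q)=2^n$ for some $n$; then Hoffmann's theorem (Theorem~\ref{H95}) applied to the isotropy of $\pi\otimes p$ over $F(\pi\otimes q)$ forces $\dim p>2^n$ (because $\dim\pi$ is a $2$-power, so $\dim p\leqs 2^n$ would place the $2$-power $2^n\dim\pi$ between $\dim(\pi\otimes p)$ and $\dim(\pi\otimes q)$). From $\dim p>2^n$ one gets $\dim p-i_1(p)\geqs 2^n$ (again by Hoffmann, via Lemma~\ref{star}), while Theorem~\ref{km}$(i)$ applied to $q$ isotropic over $F(p)$ gives the reverse inequality; equality then yields the conclusion via Theorem~\ref{km}$(ii)$.

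Your approach instead applies Theorem~\ref{km}$(i)$ to the $\pi$-multiples and then descends using Theorem~\ref{i1bound} and Proposition~\ref{i1maxspl}. This is a clean squeeze and avoids singling out the $2$-power explicitly, but note that in the paper those two results are proved \emph{after} Proposition~\ref{funnycor}, so your argument introduces forward references (there is no circularity, since neither Theorem~\ref{i1bound} nor Proposition~\ref{i1maxspl} depends on Proposition~\ref{funnycor}). The paper's proof is more self-contained at that point in the exposition, needing only Hoffmann and Karpenko--Merkurjev; yours packages the same content through the later, heavier machinery.
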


\begin{proof} Since $q$ has maximal splitting, it follows that $\dim q-i_1(q)=2^n$ for some $n\in\mathbb{N}$. As $\pi\otimes p$ is isotropic over $F(\pi\otimes q)$, Theorem~\ref{H95} enables us to conclude that $\dim p>2^n$. Since $q$ is isotropic over $F(p)$, Theorem~\ref{km}~$(i)$ implies that $\dim p-i_1(p)=2^n$, whereby Theorem~\ref{km}~$(ii)$ implies that $p$ is isotropic over $F(q)$. 
\end{proof}

The following result regarding the isotropy of multiples of round forms is well known (see \cite[Theorem 2]{WS}).


\begin{theorem}\label{WS} $($Wadsworth, Shapiro$)$ Let $\alpha$ be an anisotropic round form over F and let $q$ be another
form over $F$. If $\alpha\otimes q$ is isotropic, then there exist forms $q_1$ and $q_2$ over $F$ such that $\alpha\otimes q_1$ is anisotropic, $q_2$ is hyperbolic,
and $\alpha\otimes q\simeq \alpha\otimes q_1\perp \alpha\otimes q_2$. In particular, $i_W(\alpha\otimes q)=(\dim\alpha) i_W(q_2)$.
\end{theorem}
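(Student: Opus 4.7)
The plan is to proceed by induction on $\dim q$, with the base case being when $\alpha\otimes q$ is anisotropic (take $q_1=q$ and $q_2$ trivial). The inductive step will follow from the claim that if $\alpha\otimes q$ is isotropic, then there exists a form $q''$ with $\dim q''=\dim q-2$ such that $\alpha\otimes q\simeq\alpha\otimes\langle 1,-1\rangle\perp\alpha\otimes q''$.

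To prove the claim, I would diagonalise $q\simeq\langle a_1,\ldots,a_n\rangle$ and select vectors $v_1,\ldots,v_n$ in the underlying space of $\alpha$, not all zero, satisfying $\sum_{i=1}^n a_i\alpha(v_i)=0$, chosen so that the support $S=\{i:v_i\neq 0\}$ has minimal size. The anisotropy of $\alpha$ forces $|S|\geqs 2$. For each $i\in S$ we have $\alpha(v_i)\in D_F(\alpha)$, so for a fixed $i_0\in S$ the ratios $c_i=\alpha(v_i)/\alpha(v_{i_0})$ all lie in the group $D_F(\alpha)$. Roundness of $\alpha$ then gives $c_i\alpha\simeq\alpha$, and hence $a_ic_i\alpha\simeq a_i\alpha$ for each $i\in S$.

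Dividing the identity $\sum_{i\in S}a_i\alpha(v_i)=0$ by $\alpha(v_{i_0})$ yields $\sum_{i\in S}a_ic_i=0$, so the form $\tilde{q}:=\langle a_ic_i\rangle_{i\in S}$ is isotropic (via the all-ones vector). Combined with the isometry $\alpha\otimes\tilde{q}\simeq\alpha\otimes\langle a_i\rangle_{i\in S}$ and the Witt decomposition $\tilde{q}\simeq\langle 1,-1\rangle\perp r$, this produces $\alpha\otimes\langle a_i\rangle_{i\in S}\simeq\alpha\otimes\langle 1,-1\rangle\perp\alpha\otimes r$. Recombining with the complementary subform $\langle a_i\rangle_{i\notin S}$ establishes the claim, with $q''=r\perp\langle a_i\rangle_{i\notin S}$.

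Iterating the claim gathers the hyperbolic pieces into a form $q_2$ and leaves $q_1$ with $\alpha\otimes q_1$ anisotropic; the Witt index formula follows immediately, since $\alpha\otimes q_2$ is hyperbolic and hence $i_W(\alpha\otimes q)=i_W(\alpha\otimes q_2)=(\dim\alpha) i_W(q_2)$. The main obstacle is the claim itself: guaranteeing that a hyperbolic piece of the precise shape $\alpha\otimes\langle 1,-1\rangle$ can always be extracted from $\alpha\otimes q$. The minimal-support selection is the decisive trick, as it forces every $\alpha(v_i)$ for $i\in S$ to be nonzero and thereby allows us to rescale each $a_i$ ($i\in S$) by the similarity factor $c_i$ without altering the isometry class after tensoring with the round form $\alpha$.
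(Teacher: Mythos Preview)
The paper does not supply its own proof of this theorem; it is quoted as a known result with a citation to Wadsworth--Shapiro \cite{WS}. Your argument is correct and is in fact the standard proof of this result: reduce to finding one hyperbolic factor of the shape $\alpha\otimes\langle 1,-1\rangle$, and do this by using roundness to replace the diagonal entries $a_i$ by $a_ic_i$ (with $c_i\in D_F(\alpha)$) so that the resulting form over~$F$ is already isotropic.

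One small point: the minimal-support trick is not actually needed for the reason you state. Once $i\in S$ you have $v_i\neq 0$, and the anisotropy of $\alpha$ alone forces $\alpha(v_i)\in F^\times$; minimality plays no role in securing this. The argument goes through verbatim with \emph{any} isotropic vector of $\alpha\otimes q$, so you can simply drop the minimality hypothesis (or keep it, since it does no harm).
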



%
%
%


With respect to the above theorem, we clearly have that $i_W(q_2)\geqs i_W(q)$. These quantities do not appear to satisfy any stronger relation however (indeed, the form $q$ may be anisotropic). In the case where $q$ is a form over $F$ of dimension at least two, one can apply Theorem~\ref{WS} to a form $\pi$ that is similar to an anisotropic Pfister form over $F$ to obtain that $i_W(\pi\otimes q)\geqs \dim\pi$ in the case where $\pi\otimes q$ is isotropic (see \cite[Theorem $1.4$]{EL} for a related result). Hence, for $q$ and $\pi$ forms over $F$ such that the dimension of $q$ is at least two, $\pi$ is similar to a Pfister form and $\pi\otimes q$ is anisotropic over $F$, we have that $i_1(\pi\otimes q)\geqs \dim\pi$. Invoking Theorem~\ref{R}, we can improve this lower bound on the first Witt index of a multiple of a Pfister form.


\begin{theorem}\label{i1bound} Let $q$ a form over $F$ of dimension at least two and $\pi$ similar to a Pfister form over $F$ be such that $\pi\otimes q$ is anisotropic over $F$. Then $i_1(\pi\otimes q)\geqs (\dim\pi)i_1(q)$.
\end{theorem}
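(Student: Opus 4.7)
My strategy is to locate a cleverly-chosen subform of $q$, transport its isotropy through Theorem~\ref{R} (Roussey) to obtain an isotropic subform of $\pi \otimes q$ over $L = F(\pi \otimes q)$, and then invoke Theorem~\ref{km}$(i)$ (Karpenko--Merkurjev) to leverage the weaker base bound $i_1(\pi \otimes \tau) \geqs \dim \pi$ recalled in the paragraph immediately preceding the statement (valid for any $\tau$ of dimension at least two with $\pi \otimes \tau$ anisotropic over $F$).

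Set $i = i_1(q)$. After scaling $\pi$ to be a Pfister form we have $q \subset \pi \otimes q$, so $q$ is itself anisotropic and hence $\dim q \geqs 2i$. The first step is to pick any subform $p \subset q$ with $\dim p = \dim q - i + 1$; the inequality $\dim q \geqs 2i$ then forces $\dim p \geqs i + 1 \geqs 2$, so $p$ qualifies as an input to Theorem~\ref{R}. Applying Lemma~\ref{star} in $q_{F(q)}$ (whose Witt index is precisely $i_1(q) = i$) shows that $p_{F(q)}$ is isotropic, whereupon Theorem~\ref{R} yields that $\pi \otimes p$ is isotropic over $L$. Since $\pi \otimes p \subset \pi \otimes q$ and the latter is anisotropic over $F$, the form $\pi \otimes p$ is itself anisotropic over $F$, so Theorem~\ref{km}$(i)$ applies to the pair $(\pi \otimes p,\, \pi \otimes q)$ and delivers
$$\dim(\pi \otimes p) - i_1(\pi \otimes p) \geqs \dim(\pi \otimes q) - i_1(\pi \otimes q).$$

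Using $\dim q - \dim p = i - 1$, this rearranges to $i_1(\pi \otimes q) \geqs i_1(\pi \otimes p) + (\dim \pi)(i - 1)$; combining with the base bound $i_1(\pi \otimes p) \geqs \dim \pi$ (valid since $\dim p \geqs 2$) then produces $i_1(\pi \otimes q) \geqs (\dim \pi) i$, as desired. The main obstacle is conceptual rather than computational: one must identify the codimension $i - 1$ as the sweet spot where Lemma~\ref{star} is just barely applicable, while the resulting gap $\dim(\pi \otimes q) - \dim(\pi \otimes p) = (\dim \pi)(i - 1)$ simultaneously supplies precisely the extra factor needed to promote the base bound $\dim \pi$ to the sharper $(\dim \pi) i$. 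Once this subform is in hand, each of Lemma~\ref{star}, Theorem~\ref{R}, Theorem~\ref{km}$(i)$, and the base bound slots in without further effort.
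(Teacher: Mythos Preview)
Your proof is correct and follows essentially the same approach as the paper's: choose a subform of $q$ of dimension $\dim q - i_1(q) + 1$, use Lemma~\ref{star} and Theorem~\ref{R} to make its tensor with $\pi$ isotropic over $F(\pi\otimes q)$, then apply Theorem~\ref{km}$(i)$ together with the base bound $i_1(\pi\otimes p)\geqs\dim\pi$. The only cosmetic differences are that the paper treats the case $i_1(q)=1$ separately and, via isotropy equivalence, actually obtains equality in the Karpenko--Merkurjev step, whereas you handle all cases uniformly and use only the inequality, which suffices.
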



\begin{proof} If $i_1(q)=1$, then the result follows from invoking Theorem~\ref{WS} with respect to the field $F(\pi\otimes q)$ (we note that $\pi$ is anisotropic over $F(\pi\otimes q)$ by Theorem~\ref{H95}). Otherwise, let $q'\subset q$ over $F$ of dimension $\dim q -i_1(q)+1$. Lemma~\ref{star} implies that $q'$ is isotropic over ${F(q)}$. Hence, $\pi\otimes q'$ is isotropic over $F(\pi\otimes q)$ by Theorem~\ref{R}. As $\pi\otimes q'\subset\pi\otimes q$, we have that $\pi\otimes q'$ is anisotropic over $F$ and, furthermore, that $\pi\otimes q$ is isotropic over $F(\pi\otimes q')$, whereby $\pi\otimes q'$ and $\pi\otimes q$ are isotropy equivalent. Invoking Theorem~\ref{km}~$(i)$, we have that $\dim(\pi\otimes q')-i_1(\pi\otimes q')= \dim(\pi\otimes q)-i_1(\pi\otimes q)$, whereby $i_1(\pi\otimes q)=i_1(\pi\otimes q')+\dim\pi (\dim q -\dim q')=i_1(\pi\otimes q')+\dim\pi (i_1(q)- 1)$. Since $i_1(\pi\otimes q')\geqs\dim\pi$ by Theorem~\ref{WS}, we have that $i_1(\pi\otimes q)\geqs (\dim\pi)i_1(q)$.
\end{proof}



With respect to the above theorem, the following example shows that $i_1(\pi\otimes q)$ can exceed $(\dim\pi)i_1(q)$. 

\begin{example} As in Example~\ref{nocon}, let $q\simeq\< 1,1,1,7\>$ and $\pi\simeq\< 1,1,1,1\>$ over $F=\mathbb{Q}$. As before, $i_1(q)=1$ and $\pi\otimes q$ is isometric to the Pfister form $16\x\< 1\>$. Hence, we have that $i_1(\pi\otimes q)=8 >(\dim\pi)i_1(q)=4$.
\end{example}


%

For certain forms $q$ over $F$, we can determine the value of $i_1(\pi\otimes q)$.

\begin{prop}\label{i1maxspl} Let $q$ a form over $F$ of dimension at least two and $\pi$ similar to a Pfister form over $F$ be such that $\pi\otimes q$ is anisotropic over $F$. If $q$ has maximal splitting, then $i_1(\pi\otimes q)= (\dim\pi)i_1(q)$. 
\end{prop}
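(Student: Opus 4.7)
The plan is to complement the lower bound from Theorem~\ref{i1bound} with a matching upper bound. Since that theorem already provides $i_1(\pi\otimes q)\geq(\dim\pi)i_1(q)$, it suffices to establish the reverse inequality $i_1(\pi\otimes q)\leq(\dim\pi)i_1(q)$, and here the maximal splitting hypothesis on $q$ is exactly what is needed.

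Write $\dim\pi=2^k$ (since $\pi$ is similar to a Pfister form) and $\dim q=2^n+m$ with $1\leq m\leq 2^n$ and $i_1(q)=m$ (by maximal splitting). Then
\[
\dim(\pi\otimes q)=2^{n+k}+2^k m,\qquad 1\leq 2^km\leq 2^{n+k},
\]
so $\pi\otimes q$ itself has a ``power of two plus admissible remainder'' decomposition to which Theorem~\ref{H95} applies. Concretely, I would pick any subform $\psi\subset\pi\otimes q$ with $\dim\psi=2^{n+k}$; such a subform exists and is anisotropic because $\pi\otimes q$ is. Applying Theorem~\ref{H95} with $p=\psi$ and with $2^{n+k}<\dim(\pi\otimes q)$, I conclude that $\psi$ remains anisotropic over $F(\pi\otimes q)$.

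Now $\psi_{F(\pi\otimes q)}$ is an anisotropic subform of $(\pi\otimes q)_{F(\pi\otimes q)}$. The contrapositive of Lemma~\ref{star}, applied over $F(\pi\otimes q)$, therefore yields
\[
\dim\psi\leq\dim(\pi\otimes q)-i_W\bigl((\pi\otimes q)_{F(\pi\otimes q)}\bigr)=\dim(\pi\otimes q)-i_1(\pi\otimes q),
\]
that is, $2^{n+k}\leq 2^{n+k}+2^km-i_1(\pi\otimes q)$, whence $i_1(\pi\otimes q)\leq 2^km=(\dim\pi)i_1(q)$. Combined with Theorem~\ref{i1bound}, this produces the claimed equality.

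The proof is essentially a bookkeeping exercise, and I do not anticipate any substantial obstacle. The conceptual content is simply the observation that tensoring with $\pi$ multiplies both the ``$2^n$'' part and the ``$m$'' part of the decomposition of $\dim q$ by $\dim\pi=2^k$, so Hoffmann's theorem still pins down the maximal possible value of $i_1(\pi\otimes q)$, and this maximum coincides with the lower bound already proved in Theorem~\ref{i1bound}.
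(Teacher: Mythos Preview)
Your proposal is correct and follows essentially the same approach as the paper: both arguments combine the lower bound from Theorem~\ref{i1bound} with an upper bound obtained by choosing a $2^{n+k}$-dimensional subform of $\pi\otimes q$, invoking Theorem~\ref{H95} to keep it anisotropic over $F(\pi\otimes q)$, and then applying Lemma~\ref{star}. The only cosmetic difference is that the paper phrases the upper bound as a proof by contradiction (assuming $i_1(\pi\otimes q)>(\dim\pi)i_1(q)$ forces the subform to be isotropic), whereas you argue directly via the contrapositive of Lemma~\ref{star}.
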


\begin{proof} Let $\dim q= 2^n+k$ for some $n,k\in\mathbb{N}$, where $0< k\leqs 2^n$. Hence, $\dim(\pi\otimes q)=2^n\dim\pi+k\dim\pi$, where $0< k\dim\pi\leqs 2^n\dim\pi$. As $i_1(q)=k$, 
Theorem~\ref{i1bound} implies that $i_1(\pi\otimes q)\geqs k\dim\pi$. Let $\vartheta\subset\pi\otimes q$ over $F$ such that $\dim\vartheta =2^n\dim\pi$. If $i_1(\pi\otimes q)> k\dim\pi$, then Lemma~\ref{star} implies that $\vartheta$ is isotropic over $F(\pi\otimes q)$, contradicting Theorem~\ref{H95}. Thus, $i_1(\pi\otimes q)= (\dim\pi)i_1(q)$.
\end{proof}

\begin{cor}\label{maxsplcor} Let $q$ a form over $F$ of dimension at least two and $\pi$ similar to a Pfister form over $F$ be such that $\pi\otimes q$ is anisotropic over $F$. If $q$ has maximal splitting, then $\pi\otimes q$ has maximal splitting.
\end{cor}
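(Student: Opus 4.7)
The plan is to simply unwind the definitions and apply Proposition~\ref{i1maxspl} directly.

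First I would write $\dim q = 2^n + k$ with $n \in \mathbb{N}$ and $1 \leqs k \leqs 2^n$, so that the maximal splitting hypothesis gives $i_1(q) = k$. Since $\pi$ is similar to a Pfister form, $\dim \pi = 2^m$ for some $m \in \mathbb{N}$, and consequently
\[
\dim(\pi \otimes q) = 2^m(2^n + k) = 2^{n+m} + k\cdot 2^m,
\]
where $1 \leqs k\cdot 2^m \leqs 2^{n+m}$. This shows that the dimension of $\pi \otimes q$ is presented in the form required by the definition of maximal splitting, with the relevant ``$m$-parameter'' equal to $k \cdot 2^m = (\dim \pi) i_1(q)$.

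Then I would invoke Proposition~\ref{i1maxspl} to conclude that $i_1(\pi \otimes q) = (\dim \pi) i_1(q) = k \cdot 2^m$, which is precisely the value demanded by the definition of maximal splitting for the form $\pi \otimes q$. Hence $\pi \otimes q$ has maximal splitting.

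There is no real obstacle here: the corollary is essentially a bookkeeping consequence of Proposition~\ref{i1maxspl} together with the fact that $\dim \pi$ is a power of two, which ensures that the new dimension decomposition $\dim(\pi \otimes q) = 2^{n+m} + k \cdot 2^m$ lands in the correct range for maximal splitting to be meaningful.
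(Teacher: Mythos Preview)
Your proof is correct and is essentially the same as the paper's: both amount to applying Proposition~\ref{i1maxspl} and then checking that $\dim(\pi\otimes q)-i_1(\pi\otimes q)=(\dim\pi)(\dim q-i_1(q))$ is a $2$-power. The paper phrases this via the characterisation ``maximal splitting $\Leftrightarrow$ dimension minus first Witt index is a $2$-power'', whereas you write out the explicit decomposition $\dim(\pi\otimes q)=2^{n+m}+k\cdot 2^m$; these are the same argument.
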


\begin{proof} As follows from the definition, an anisotropic form over $F$ of dimension at least two has maximal splitting if and only if its dimension minus its first Witt index is a $2$-power. By Proposition~\ref{i1maxspl}, we have that $\dim(\pi\otimes q)-i_1(\pi\otimes q)=\dim\pi(\dim q-i_1(q))$. Since $q$ has maximal splitting, $\dim q-i_1(q)=2^k$ for some $k\in\mathbb{N}$, whereby $\dim(\pi\otimes q)-i_1(\pi\otimes q)=2^{n+k}$ for some $n\in\mathbb{N}$. Hence, $\pi\otimes q$ has maximal splitting.
\end{proof}

\section{Basic properties of the weak isotropy index}

By definition, the $q$-sublevel and $q$-level of $F$ satisfy $$\sub_q(F)=\inf\{ n\in\mathbb{N}\mid (n+1)\x q\text{ is isotropic over }F\}$$ and $$s_q(F)=\inf\{ n\in\mathbb{N}\mid \< 1\>\perp n\x q\text{ is isotropic over }F\}.$$ An important distinction between these concepts is the fact that the $q$-sublevel is invariant under scaling, whereas the $q$-level is generally not. 

If $q$ is an isotropic (and hence universal) form over $F$, then $\sub_q(F)=0$ and $s_q(F)=1$. Thus, we will restrict our attention to forms $q$ that are anisotropic over $F$. Clearly, if $\sub_q(F)<\infty$, then the form $(\sub_q(F)+1)\x q$ is isotropic over $F$, whereby we have that $s_q(F)\leqs \sub_q(F)+1$ (as per \cite[Lemma $3.1~(7)$]{BGBM}).

Our opening result records some basic properties of the $q$-sublevel of a field, by establishing analogues of statements in \cite[Lemma 3.1 and Proposition 3.3]{BGBM} concerning the $q$-level.

\begin{prop}\label{basicsub} Let $q$ be an anisotropic form over $F$.

\begin{enumerate}[$(i)$] \item $1\leqs \sub_q(F)\leqs s(F)$.
\vspace{.1cm}

\item If $q'\subset q$, then $\sub_q(F)\leqs \sub_{q'}(F)$.
\vspace{.1cm}

\item If $K/F$ is a field extension, then $\sub_q(K)\leqs\sub_{q}(F)$.
\vspace{.1cm}

\item If $K/F$ is a field extension whose degree is odd, then $\sub_q(K)=\sub_{q}(F)$.
\vspace{.1cm}

\item $\sub_q(F)=\sub_{q}(F(x))$, and $\sub_q(F)=\sub_{q\perp\< x\>}(F(x))$ if $F$ is formally real.
\vspace{.1cm}

\item For every $n\in\mathbb{N}\setminus \{ 0\}$, we have that $\sub_{n\x q}(F)=\left\lfloor \frac {\sub_q(F)} n\right\rfloor$.
\vspace{.1cm}

\item If $\sub_q(F)<\infty$, then $\sub_q(F)\leqs p(F)-1$.
\vspace{.1cm}

\item If $F$ is not formally real, then $\sub_q(F)\leqs\left\lceil\frac {u(F)} {\dim q}\right\rceil\leqs u(F)$.
\end{enumerate} 
\end{prop}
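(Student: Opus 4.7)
The routine items come first: in (i), the lower bound is immediate from the anisotropy of $q$, while for the upper bound I would fix any $a\in D_F(q)$ and observe that $(s(F)+1)\x\<a\>\simeq a\cdot (s(F)+1)\x\<1\>$ is isotropic and is a subform of $(s(F)+1)\x q$. Items (ii) and (iii) are immediate from $(n+1)\x q'\subset(n+1)\x q$ and the preservation of isotropy under base change; (iv) follows by applying Springer's odd-degree-descent theorem to $(n+1)\x q$. For (vi), the definition directly shows $\sub_{n\x q}(F)$ to be the least $m\in\N$ with $n(m+1)\geqs \sub_q(F)+1$, and the identity $\lceil a/n\rceil=\lfloor(a-1)/n\rfloor+1$ recalled in the introduction then yields the stated formula. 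For (viii), the minimality of $m=\sub_q(F)$ forces $m\x q$ to be anisotropic, whence $m\dim q\leqs u(F)$ and so $m\leqs\lfloor u(F)/\dim q\rfloor\leqs \lceil u(F)/\dim q\rceil\leqs u(F)$.

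Part (v) is handled via the purely transcendental extension $F(x)$. The first equality $\sub_q(F)=\sub_q(F(x))$ combines (iii) with the classical fact that an anisotropic form over $F$ remains anisotropic over $F(x)$. For the second equality under the formally real hypothesis, $\sub_{q\perp\<x\>}(F(x))\leqs \sub_q(F)$ follows from (ii) together with the first equality. For the reverse inequality, I would take an isotropy relation for $(n+1)\x(q\perp\<x\>)$ over $F(x)$, clear denominators to obtain an identity $((n+1)\x q)(\mathbf v(x))+x\sum_i w_i(x)^2=0$ in $F[x]$ with $(\mathbf v,\mathbf w)$ primitive, and specialise at $x=0$. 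If $\mathbf v(0)\ne 0$, then $(n+1)\x q$ is isotropic over $F$ and we are done; otherwise $x$ divides every component of $\mathbf v$, and after factoring $x^2$ from the first summand and dividing the identity through by $x$, a further specialisation at $x=0$ gives $\sum_i w_i(0)^2=0$, which by the formal realness of $F$ forces every $w_i(0)=0$, contradicting primitivity.

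Part (vii) is the main technical obstacle. Given $n=\sub_q(F)<\infty$, the isotropy of $(n+1)\x q$ together with the anisotropy of $n\x q$ pin $n+1$ as the minimal length of a non-trivial null sum of values drawn from $D_F(q)$: isolating one term, there is $a\in D_F(q)$ with $-a\in D_F(n\x q)$ and no shorter such relation. The plan is to convert such a minimal relation into a representation of an element of $F^\times$ as a sum of squares of length at least $n+1$, forcing $p(F)\geqs n+1$. This requires adapting the template of the corresponding $q$-level statement in \cite[Lemma~3.1 and Proposition~3.3]{BGBM}, with the indexing shift between $s_q$ and $\sub_q$ being the detail that makes this step the most delicate.
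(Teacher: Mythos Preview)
Your treatment of parts (i)--(vi) and (viii) is correct and in line with the paper, which disposes of them just as tersely; in (viii) you actually obtain the sharper bound $\sub_q(F)\leqs\lfloor u(F)/\dim q\rfloor$, from which the stated ceiling inequality follows a fortiori. Your argument for the second equality in (v) is a correct unpacking of \cite[Exercise IX.$1$]{LAM}, which is precisely the reference the paper invokes.

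The gap is in (vii), which you single out as ``the main technical obstacle'' and leave as a plan. In the paper this is in fact the shortest item. The observation you are missing is that $m\x q\simeq q\otimes(m\x\<1\>)$: diagonalising $q=\<b_1,\ldots,b_d\>$, an isotropy relation for $(\sub_q(F)+1)\x q$ takes the form $\sum_j b_j c_j=0$ with each $c_j$ a sum of $\sub_q(F)+1$ squares in $F$. But every nonzero sum of squares is already a sum of at most $p(F)$ squares, so the very same $c_j$ witness the isotropy of $p(F)\x q$, whence $\sub_q(F)\leqs p(F)-1$ by the definition of the sublevel. Your plan --- to manufacture an element of $\sum {F^{\x}}^2$ of length at least $n+1$ from the minimal null relation --- is pointed in the wrong direction: rather than exhibiting a \emph{long} sum of squares, one uses that every sum of squares is \emph{short}, and pushes the isotropy down from $(\sub_q(F)+1)\x q$ to $p(F)\x q$. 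No adaptation of the $s_q$ argument or delicate index-shifting is needed.
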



\begin{proof} $(i)$, $(ii)$ and $(iii)$ easily follow from the definition of the $q$-sublevel of a field, while $(iv)$ can be proven by invoking Springer's Theorem \cite[Theorem VII.$2.7$]{LAM}.

To prove $(v)$, we recall that every anisotropic form over $F$ remains anisotropic over $F(x)$ (see \cite[Lemma IX.$1.1$]{LAM}). Moreover, if $F$ is formally real and $n\x(q\perp\< x\>)$ is isotropic over $F(x)$ for some $n\in\mathbb{N}$, then \cite[Exercise IX.$1$]{LAM} implies that $n\x q$ is isotropic over $F$.



To prove $(vi)$, we note that $\left(\left\lceil\frac {\sub_q(F)+1} {n}\right\rceil\right)n\x q$ is isotropic, whereby $\sub_{n\x q}(F)\leqs \left\lceil\frac {\sub_q(F)+1} {n}\right\rceil -1$. Since $\left\lceil\frac {\sub_q(F)+1} {n}\right\rceil-1=\left\lfloor \frac {\sub_q(F)} n +1\right\rfloor-1=\left\lfloor \frac {\sub_q(F)} n\right\rfloor$, we have that $\left(\left\lceil\frac {\sub_q(F)+1} {n}\right\rceil-1\right)n\x q$ is anisotropic, establishing $(vi)$.


To prove $(vii)$, we may assume that $p(F)<\infty$. Since $(\sub_q(F) +1)\x q$ is isotropic, we have that $p(F)\x q$ is isotropic. Hence, $\sub_q(F)\leqs p(F)-1$.


Statement $(viii)$ follows from the fact that $\left(\left\lceil\frac {u(F)} {\dim q}\right\rceil +1\right)\x q$ is isotropic.
\end{proof}

\begin{remark} We remark that all of the bounds in Proposition~\ref{basicsub} can be attained.

As $\sub_{\< 1\>}(F)=s(F)$, letting $q\simeq\< 1\>$ over a field $F$ that is not formally real, one realises the upper bound in $(i)$.

Invoking $(v)$, one sees that the upper bound in $(ii)$ can be attained in the case where $q'$ is a proper subform of $q$.


The attainability of the upper bound in $(iii)$ can be deduced from $(iv)$ or $(v)$.

The upper bound in $(vii)$ can be realised by letting $q\simeq\< 1\>$ over a field $F$ of finite Pythagoras number satisfying $p(F)=s(F)+1$ (see \cite[Ch. 7, Proposition 1.5]{P}).

Finally, as per \cite[Remark 3.4]{BGBM}, letting $q\simeq\< 1\>$ over a field $F$ such that $s(F)=u(F)=2^m$, one realises the upper bounds in $(viii)$.
\end{remark}


%
%
%



As was observed in \cite[Lemma $3.1~(8)$]{BGBM}, if $q$ is an anisotropic form over $F$ such that $1\in D_F(q)$, then $\sub_q(F)\leqs s_q(F)$, since $\< 1\>\perp s_q(F)\x q\subset (s_q(F)+1)\x q$ in this case. Indeed, more generally, we have the following relation.

\begin{prop}\label{ssubrel} Let $q$ be an anisotropic form over $F$ and $a\in F^{\x}$. Then $$\sub_q(F)=\inf\{s_{aq}(F)\mid a\in D_F(q)\}.$$
\end{prop}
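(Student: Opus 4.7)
The plan is to reformulate the $aq$-level in terms of a concrete Witt-style expression, then to verify the two inequalities separately, with the reverse direction supplied by extracting a representation from a minimal isotropic sum.

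The key preliminary observation I would establish is that for any $a\in F^{\x}$, the form $\<1\>\perp n\x aq$ is isotropic over $F$ if and only if $\<a\>\perp n\x q$ is isotropic over $F$; indeed, scaling by $a$ gives $a(\<1\>\perp n\x aq)\simeq \<a\>\perp n\x q$, using $\<a^2\>\simeq\<1\>$. Consequently $s_{aq}(F)$ equals the least $n\in\N$ such that $\<a\>\perp n\x q$ is isotropic.

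For the inequality $\sub_q(F)\leqs s_{aq}(F)$ with $a\in D_F(q)$: if $a\in D_F(q)$, then $\<a\>\subset q$, so $\<a\>\perp n\x q\subset (n+1)\x q$, and isotropy of the smaller form forces isotropy of the larger. Applied with $n=s_{aq}(F)$, this yields $\sub_q(F)\leqs s_{aq}(F)$, and therefore $\sub_q(F)\leqs\inf\{s_{aq}(F)\mid a\in D_F(q)\}$. This handles the case $\sub_q(F)=\infty$ as well.

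For the reverse inequality, assume $\sub_q(F)=n<\infty$, so that $(n+1)\x q$ is isotropic over $F$. Choose vectors $v_0,v_1,\ldots,v_n$, not all zero, with $\sum_{i=0}^{n} q(v_i)=0$. By the minimality of $n$, the form $n\x q$ is anisotropic (noting $\sub_q(F)\geqs 1$ since $q$ is anisotropic), so if any $v_i$ were zero we could delete it and obtain a non-trivial isotropic representation of $n\x q$, a contradiction. Hence every $v_i$ is nonzero and each $q(v_i)$ lies in $D_F(q)$. Set $a:=q(v_0)$; then the vector $(1,v_1,\ldots,v_n)$ is a non-trivial isotropic vector for $\<a\>\perp n\x q$, since $a\cdot 1^2+\sum_{i=1}^{n}q(v_i)=\sum_{i=0}^{n}q(v_i)=0$. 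By the preliminary observation, $s_{aq}(F)\leqs n=\sub_q(F)$, giving $\inf\{s_{aq}(F)\mid a\in D_F(q)\}\leqs \sub_q(F)$, which together with the previous paragraph yields equality.

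There is no serious obstacle here: the only subtlety is the argument that in a minimal isotropic representation all vectors must be nonzero, which follows cleanly from the definition of $\sub_q(F)$ as an infimum together with the anisotropy of $q$.
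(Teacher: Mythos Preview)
Your proof is correct and follows essentially the same approach as the paper's: both arguments rest on the observation that $(n+1)\x q$ is isotropic precisely when there is some $a\in D_F(q)$ with $\<a\>\perp n\x q$ isotropic (equivalently, $-a\in D_F(n\x q)$), and then reinterpret this as $-1\in D_F(n\x aq)$ via scaling. The paper states this equivalence in one line and unwinds the definitions directly, while you separate it into two inequalities and make the extraction of $a$ explicit via an isotropic vector; the underlying content is the same.
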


\begin{proof} By definition, $\sub_q(F)=\inf\{ n\in\mathbb{N}\mid (n+1)\x q\text{ is isotropic over }F\}$.

As $(n+1)\x q$ is isotropic over $F$ if and only if there exists $a\in F^{\x}$ such that $a\in D_F(q)$ and $-a\in D_F(n\x q)$, we can conclude that $$\sub_q(F)=\inf\{ n\in\mathbb{N}\mid -a\in D_F(n\x q)\text{ for some }a\in D_F(q)\}.$$ Hence, we have that $\sub_q(F)=\inf\{ n\in\mathbb{N}\mid -1\in D_F(n\x aq)\text{ for some }a\in D_F(q)\}$. Hence, we can conclude that $\sub_q(F)=\inf\{s_{aq}(F)\mid a\in D_F(q)\}$.
\end{proof}

For $q$ a form over $F$ and $a\in F^{\x}$, $\ell_q(a)=\inf\{ n\in\mathbb{N}\mid n\x q\perp \< -a\>\textrm{ is isotropic over }F\}$ corresponds to the \emph{$q$-length of $a\in F^{\x}$}, as introduced in \cite{BGBM}. The \emph{Pythagoras $q$-number of $F$} is $p_q(F)=\sup\{\ell_q(a)\mid a\in F^{\x}\textrm{ has }\ell_q(a)<\infty\}$. 

As above, for $q$ an anisotropic form over $F$, the finiteness of $\sub_q(F)$ implies that of $s_q(F)$. Indeed, we have the following result.



\begin{prop}\label{sandsub} For $q$ an anisotropic form over $F$, the following are equivalent:
\begin{enumerate}[$(i)$]
\item $\sub_q(F)<\infty$.
\vspace{.1cm}

\item $s_q(F)<\infty$ and $s_{-q}(F)<\infty$.
\vspace{.1cm}

\item $p_q(F)<\infty$ and $p_q(F)\x q$ is universal. 
\end{enumerate}
\end{prop}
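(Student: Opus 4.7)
The plan is to establish the equivalences via the cycle $(i) \Rightarrow (ii) \Rightarrow (i)$ together with $(i) \Leftrightarrow (iii)$, exploiting throughout the fact that an isotropic form over $F$ is universal. For $(i) \Rightarrow (ii)$, I would set $N=\sub_q(F)+1$, so that $N\x q$ is isotropic and hence universal; then $-1\in D_F(N\x q)$ immediately gives $s_q(F)\leqs N$, while $1\in D_F(N\x q)$ translates into $-1\in D_F(N\x(-q))$, yielding $s_{-q}(F)\leqs N$. For $(ii) \Rightarrow (i)$, I would write $-1=q(v_1)+\ldots+q(v_n)$ with $n=s_q(F)$ and $1=q(w_1)+\ldots+q(w_m)$ with $m=s_{-q}(F)$, and stack these tuples to exhibit a representation of $0$ by $(n+m)\x q$; this is non-trivial because the $v_i$-block has non-zero $q$-sum $-1$, giving $\sub_q(F)\leqs n+m-1<\infty$.

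For $(i)\Rightarrow(iii)$, the universality of $N\x q$ established above implies $\ell_q(a)\leqs N$ for every $a\in F^{\x}$, and hence $p_q(F)\leqs \sub_q(F)+1<\infty$. The isotropy of $\ell_q(a)\x q\perp\<-a\>$ forces $a\in D_F(\ell_q(a)\x q)$ (either $a$ is represented directly, or $\ell_q(a)\x q$ is itself isotropic and hence universal); padding the representing vector with zeros then shows $a\in D_F(p_q(F)\x q)$, so that $p_q(F)\x q$ is universal. For $(iii)\Rightarrow(i)$, I would fix $a=q(v)\in D_F(q)$, use universality of $p_q(F)\x q$ to write $-a=q(w_1)+\ldots+q(w_{p_q(F)})$, and observe that $(v,w_1,\ldots,w_{p_q(F)})$ is a non-trivial isotropic vector for $(p_q(F)+1)\x q$; hence $\sub_q(F)\leqs p_q(F)<\infty$.

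The argument amounts to a sequence of translations between the three definitional frameworks, and I do not anticipate any serious obstacle. The only point requiring real care is verifying the non-triviality of the isotropic vectors produced, which in each case follows either from the anisotropy of $q$ or from the non-vanishing of the scalars ($\pm 1$ or $a$) appearing in the identities.
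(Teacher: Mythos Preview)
Your proof is correct and rests on the same ideas as the paper's: isotropic forms are universal, and representations of $\pm 1$ (or $\pm a$) by copies of $q$ can be stacked to produce isotropic vectors. The only difference is organizational: the paper proves the single cycle $(i)\Rightarrow(iii)\Rightarrow(ii)\Rightarrow(i)$, whereas you establish the two equivalences $(i)\Leftrightarrow(ii)$ and $(i)\Leftrightarrow(iii)$ separately, so your $(i)\Rightarrow(ii)$ step is extra work that the paper absorbs into $(iii)\Rightarrow(ii)$.
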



\begin{proof} Assuming $(i)$, we have that $(\sub_q(F)+1)\x q$ is isotropic, and thus universal, whereby $p_q(F)\x q$ is universal and $p_q(F)\leqs \sub_q(F)+1$, establishing $(iii)$. 

Assuming $(iii)$, we have that $\{-1,1\}\subset D_F(p_q(F)\x q)$, whereby $s_q(F)\leqs p_q(F)$ and $s_{-q}(F)\leqs p_q(F)$, establishing $(ii)$.

Assuming $(ii)$, we have that $(s_q(F)+s_{-q}(F))\x q$ is isotropic, whereby $\sub_q(F)\leqs s_q(F)+s_{-q}(F)-1$, establishing $(i)$.
\end{proof}


With respect to the above result, we note the existence of forms $q$ over fields $F$ such that $s_q(F)<\infty$ and $\sub_q(F)=\infty$ (see Remark~\ref{remsandsub}).

\begin{prop}\label{subpq} Let $q$ be an anisotropic form over $F$ such that $\sub_q(F)<\infty$. Then $p_q(F)-1\leqs \sub_q(F)\leqs p_q(F)$.
\end{prop}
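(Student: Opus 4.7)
My plan is to deduce both inequalities from Proposition~\ref{sandsub} together with the elementary translation between representability and isotropy. The hypothesis $\sub_q(F)<\infty$ supplies two pieces of universality: first, $(\sub_q(F)+1)\x q$ is isotropic by definition of $\sub_q$, hence universal; second, as established within the proof of Proposition~\ref{sandsub}, $p_q(F)\x q$ is universal (this is part of the content of condition $(iii)$).

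For the upper bound $\sub_q(F)\leqs p_q(F)$, I would pick any $a\in D_F(q)$ (non-empty since $q$ is non-trivial) and invoke universality of $p_q(F)\x q$ to obtain $-a\in D_F(p_q(F)\x q)$. Combining these two representations via the orthogonal decomposition $(p_q(F)+1)\x q\simeq q\perp p_q(F)\x q$ exhibits a non-trivial isotropic vector (the $a$-vector from $q$ paired with the $(-a)$-vector from $p_q(F)\x q$), whence $(p_q(F)+1)\x q$ is isotropic and $\sub_q(F)\leqs p_q(F)$.

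For the lower bound $p_q(F)-1\leqs \sub_q(F)$, I would use the universality of $(\sub_q(F)+1)\x q$ directly: for every $b\in F^{\x}$, we have $-b\in D_F((\sub_q(F)+1)\x q)$, and therefore $(\sub_q(F)+1)\x q\perp\<-b\>$ is isotropic. This yields $\ell_q(b)\leqs \sub_q(F)+1$ for every such $b$, and taking the supremum recovers $p_q(F)\leqs \sub_q(F)+1$. There is no genuine obstacle here; both inequalities are immediate once one packages the universality information supplied by Proposition~\ref{sandsub} and the equivalence $\ell_q(a)\leqs n\Leftrightarrow a\in D_F(n\x q)$.
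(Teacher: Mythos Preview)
Your proof is correct and follows essentially the same route as the paper's: both inequalities are read off from the universality statements supplied by Proposition~\ref{sandsub}, with $(\sub_q(F)+1)\x q$ universal giving $p_q(F)\leqs \sub_q(F)+1$, and $p_q(F)\x q$ universal giving $(p_q(F)+1)\x q$ isotropic. One cosmetic slip: from $-b\in D_F((\sub_q(F)+1)\x q)$ you infer that $(\sub_q(F)+1)\x q\perp\<-b\>$ is isotropic, but the equivalence you yourself quote requires $b$ (not $-b$) to be represented---harmless here since the form is universal.
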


\begin{proof} As per the above proof, $p_q(F)-1\leqs \sub_q(F)$. Moreover, since $p_q(F)\x q$ is universal, we have that $(p_q(F)+1)\x q$ is isotropic, whereby $\sub_q(F)\leqs p_q(F)$.
\end{proof}

\begin{remark} Letting $q\simeq \< 1\>$ over a field $F$ such that $\sub_q(F)<\infty$, Proposition~\ref{subpq} states that $p(F)-1\leqs s(F)\leqs p(F)$. As per \cite[Ch. 7, Proposition 1.5]{P}, there exist fields $K$ and $L$ satisfying $s(K)=p(K)-1<\infty$ and $s(L)=p(L)<\infty$.  
\end{remark}


%
%
%
%
%

\section{Values of the weak isotropy index}


In this section, we study the behaviour of the $q$-sublevel (or equivalently, the weak isotropy index) with respect to field extensions. In particular, for $q$ an anisotropic form over $F$, we will study the set $\{\sub_q(K)\mid K/F \textrm{ field extension}\}$. Clearly, $\sub_q(F)$ always belongs to this set, with the remaining entries being less than $\sub_q(F)$. 

%
%

We begin by seeking to show that certain prescribed integers belong to the above set. As motivated earlier, function fields of associated quadratic forms are the natural field extensions to consider in this regard. Invoking Theorem~\ref{H95} to this end, we can record the following result.

\begin{prop}\label{hsub} Let $q$ be an anisotropic form over $F$. If an integer $m\leqs \sub_q(F)$ is such that $m\dim q\leqs 2^n<(m+1)\dim q$ for some $n\in\mathbb{N}$, then we have that $m\in\{\sub_q(K)\mid K/F \textrm{ field extension}\}$.
\end{prop}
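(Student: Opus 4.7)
The plan is to realise $m$ as the $q$-sublevel over a single explicit extension, namely the function field $K := F\bigl((m+1)\x q\bigr)$. First I would dispatch the trivial case $m=\sub_q(F)$, where $K=F$ itself does the job; so the interesting case is $m<\sub_q(F)$. Here $(m+1)\x q$ is anisotropic over $F$, hence non-hyperbolic; moreover the inequality $m\dim q\leqs 2^n<(m+1)\dim q$ forces $\dim\bigl((m+1)\x q\bigr)\geqs 2$ (otherwise $m=0$ and $\dim q=1$, contradicting $2^n<1$). Thus $K$ is a genuinely defined extension of $F$.

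The upper bound $\sub_q(K)\leqs m$ is built in: by the defining property of the function field, $(m+1)\x q$ is isotropic over $K$.

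For the matching lower bound $\sub_q(K)\geqs m$, I need to show that $m\x q$ remains anisotropic over $K$, and this is precisely what Hoffmann's Theorem~\ref{H95} is designed to deliver. The form $m\x q$ is anisotropic over $F$ (since $m\leqs \sub_q(F)$ means $m-1<\sub_q(F)$), and the hypothesis on $m$ reads $\dim(m\x q)\leqs 2^n<\dim\bigl((m+1)\x q\bigr)$, which is exactly the dimension gap required by Theorem~\ref{H95}. Applying that theorem with $p:=m\x q$ and the function-field-defining form $(m+1)\x q$ yields the anisotropy of $(m\x q)_K$, and hence $\sub_q(K)\geqs m$.

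No serious obstacle is anticipated: the proof is a single invocation of Hoffmann's theorem, and the only bookkeeping is to confirm that the function field $K$ is defined in the substantive range $m<\sub_q(F)$ and that the boundary case $m=\sub_q(F)$ is handled by $K=F$. The edge case $m=0$ is subsumed, giving $K=F(q)$ as the natural choice.
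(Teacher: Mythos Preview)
Your proposal is correct and follows exactly the same approach as the paper: handle $m=\sub_q(F)$ trivially, and for $m<\sub_q(F)$ pass to $K=F((m+1)\x q)$ and apply Theorem~\ref{H95} to see that $m\x q$ stays anisotropic over $K$. The extra bookkeeping you include (checking that $K$ is genuinely defined, the $m=0$ edge case) is harmless padding around the same one-line argument.
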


\begin{proof} If $m=\sub_q(F)$, then there is nothing to prove. If $m<\sub_q(F)$, then $(m+1)\x q$ is anisotropic over $F$. Letting $K=F((m+1)\x q)$, Theorem~\ref{H95} implies that $m\x q$ is anisotropic over $K$, whereby $\sub_q(K)=m$.
\end{proof}

We next determine the integers to which the above result can be applied.

\begin{prop}\label{subfigure} Let $q$ be an anisotropic form over $F$. An integer $m\leqs \sub_q(F)$ is such that $m\dim q\leqs 2^n<(m+1)\dim q$ if and only if $m=\left\lfloor\frac {2^n} {\dim q}\right\rfloor$ for some $n\in\mathbb{N}$. In particular, $m=\left\lfloor\frac {2^n} {\dim q}\right\rfloor\in\{\sub_q(K)\mid K/F \textrm{ field extension}\}$.
\end{prop}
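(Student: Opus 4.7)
The plan is to split the proof into two short pieces: first, verify the claimed equivalence between the two-sided inequality $m\dim q\leqs 2^n<(m+1)\dim q$ and the floor expression $m=\lfloor 2^n/\dim q\rfloor$; second, invoke Proposition~\ref{hsub} to deduce the ``in particular'' clause.

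For the equivalence, I would fix $n\in\mathbb{N}$ and divide the two-sided inequality through by $\dim q$, which is a strictly positive integer. This converts $m\dim q\leqs 2^n<(m+1)\dim q$ into $m\leqs 2^n/\dim q<m+1$, which is precisely the defining property of $m=\lfloor 2^n/\dim q\rfloor$. Each step is reversible, so both implications follow at once; the ``for some $n\in\mathbb{N}$'' quantifier on each side carries through unchanged.

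For the final clause, suppose $m=\lfloor 2^n/\dim q\rfloor$ and $m\leqs \sub_q(F)$. The equivalence just established tells us that $m$ satisfies the hypothesis $m\dim q\leqs 2^n<(m+1)\dim q$ of Proposition~\ref{hsub}, and that proposition then yields $m\in\{\sub_q(K)\mid K/F\text{ field extension}\}$.

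There is essentially no real obstacle here: the content of the proposition is a rewriting of Proposition~\ref{hsub} in more transparent arithmetic form via the floor function, and the identity $\lceil n/m\rceil=\lfloor (n-1)/m\rfloor+1$ recalled in the preliminaries is not even needed. The only potential pitfall is keeping the standing hypothesis $m\leqs \sub_q(F)$ in force when asserting membership in $\{\sub_q(K)\mid K/F\text{ field extension}\}$, since all elements of that set are bounded above by $\sub_q(F)$; this must be inherited from the first sentence of the statement.
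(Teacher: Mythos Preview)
Your proof is correct and follows essentially the same approach as the paper's: both arguments reduce the equivalence to the defining property of the floor function and then invoke Proposition~\ref{hsub} for the final clause. Your version is slightly more streamlined, since the paper routes the converse through the identity $\left\lfloor\frac{2^n}{\dim q}\right\rfloor=\left\lceil\frac{2^n+1}{\dim q}\right\rceil-1$ rather than appealing directly to $m\leqs 2^n/\dim q<m+1$, but the underlying idea is identical.
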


\begin{proof}  Let $m\leqs \sub_q(F)$ be such that $m\dim q\leqs 2^n<(m+1)\dim q$ for some $n\in\mathbb{N}$. Since $m\dim q\leqs 2^n$, it follows that $m\leqs \frac {2^n} {\dim q}$, and thus $m\leqs\left\lfloor\frac {2^n} {\dim q}\right\rfloor$ as $m\in\mathbb{N}$. Moreover, as $2^n<(m+1)\dim q$, we have that $m>\frac {2^n} {\dim q}-1$. Hence, we have that $m\geqs \left\lfloor\frac {2^n} {\dim q}\right\rfloor$, and thus we can conclude that $m=\left\lfloor\frac {2^n} {\dim q}\right\rfloor$.

Conversely, letting $m=\left\lfloor\frac {2^n} {\dim q}\right\rfloor$, we clearly have that $\dim\left(m\x q\right)\leqs 2^n$. Moreover, as $\left\lfloor\frac {2^n} {\dim q}\right\rfloor =\left\lceil \frac {2^n+1} {\dim q}\right\rceil-1$, letting $m=\left\lfloor\frac {2^n} {\dim q}\right\rfloor$ gives us that $\dim\left(\left(m+1\right)\x q\right)> 2^n$. 


The last statement follows from applying Proposition~\ref{hsub} to $m=\left\lfloor\frac {2^n} {\dim q}\right\rfloor$.
\end{proof}

\begin{cor}\label{lowsub} Let $q$ be an anisotropic form over $F$ of dimension at least two. Then $\{ 0,1\}\subseteq\{\sub_q(K)\mid K/F \textrm{ field extension}\}$.
\end{cor}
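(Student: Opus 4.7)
The plan is to apply Proposition~\ref{subfigure} twice, for two different values of $n$ chosen so that the resulting integer $\lfloor 2^n/\dim q\rfloor$ equals $0$ and $1$ respectively. Since $q$ is anisotropic, Proposition~\ref{basicsub}$(i)$ gives $\sub_q(F)\geqs 1$, so the hypothesis $m\leqs\sub_q(F)$ underlying Proposition~\ref{subfigure} (inherited from Proposition~\ref{hsub}) is automatic for both $m=0$ and $m=1$.

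For $m=0$, I would take $n=0$: since $\dim q\geqs 2$, we have $2^0=1<\dim q$, and hence $\lfloor 2^0/\dim q\rfloor = 0$. Unpacking Proposition~\ref{subfigure} via the proof of Proposition~\ref{hsub}, the witness is $K=F(q)$, which is well defined (as $q$ is anisotropic of dimension at least two, so in particular $q\not\simeq\<1,-1\>$) and over which $q$ is isotropic; therefore $\sub_q(K)=0$.

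For $m=1$, I would pick $n=\lceil\log_2\dim q\rceil$, the least integer with $2^n\geqs\dim q$. Minimality of $n$ yields $2^n<2\dim q$ as well, so $\dim q\leqs 2^n<2\dim q$ and $\lfloor 2^n/\dim q\rfloor = 1$. Proposition~\ref{subfigure} then supplies an extension $K/F$ with $\sub_q(K)=1$; concretely, one takes $K=F$ if $\sub_q(F)=1$, and otherwise $K=F(2\x q)$, noting that $2\x q$ is anisotropic over $F$ in this case, tautologically isotropic over $K$, while Theorem~\ref{H95} keeps $q$ itself anisotropic over $K$.

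No step presents a genuine obstacle: the argument reduces to two routine floor-function checks together with the bound $\sub_q(F)\geqs 1$ recorded at the outset, and the conclusion is essentially a direct reading of Proposition~\ref{subfigure}.
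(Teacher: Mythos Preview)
Your proof is correct and follows essentially the same approach as the paper: both arguments handle $m=1$ by applying Proposition~\ref{subfigure} with the exponent $n$ determined by $2^{n-1}<\dim q\leqs 2^n$ (your $n=\lceil\log_2\dim q\rceil$ is exactly the paper's $n$ in the decomposition $\dim q=2^{n-1}+k$), and for $m=0$ the paper simply observes directly that $q_{F(q)}$ is isotropic, which is precisely the witness you extract when unpacking Proposition~\ref{subfigure} at $n=0$. Your explicit verification that $\sub_q(F)\geqs 1$ (so that the hypothesis $m\leqs\sub_q(F)$ holds for $m\in\{0,1\}$) is a detail the paper leaves implicit.
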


\begin{proof} Since $q_{F(q)}$ is isotropic, one has that $0\in\{\sub_q(K)\mid K/F \textrm{ field extension}\}$.

For $\dim q= 2^{n-1}+k$ for some $n,k\in\mathbb{N}$ where $0< k \leqs 2^{n-1}$, Proposition~\ref{subfigure} implies that $\left\lfloor\frac {2^n} {\dim q}\right\rfloor=1\in\{s_q(K)\mid K/F \textrm{ field extension}\}$.
\end{proof}




\begin{cor}\label{2powersub} Let $q$ be an anisotropic form over $F$ of dimension $2^n$ for some $n\in\mathbb{N}$. Then $2^k\in\{\sub_q(K)\mid K/F \textrm{ field extension}\}$ for all integers $k\geqs 0$ such that $2^k\leqs \sub_q(F)$.
\end{cor}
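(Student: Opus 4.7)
The plan is to reduce the statement directly to Proposition~\ref{subfigure}. That proposition tells us that for any anisotropic form $q$ over $F$ and any natural number $N$, the integer $m = \left\lfloor \frac{2^N}{\dim q}\right\rfloor$ lies in $\{\sub_q(K)\mid K/F\text{ field extension}\}$, provided that $m \leqs \sub_q(F)$.

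Since in our setting $\dim q = 2^n$, the quantity $\left\lfloor \frac{2^N}{\dim q}\right\rfloor$ becomes especially clean when $N \geqs n$: for $N = n + k$ with $k \geqs 0$, we have
\[
\left\lfloor \frac{2^{n+k}}{2^n}\right\rfloor = 2^k.
\]
Thus every non-negative power of $2$ arises in this way. So, first I would fix an integer $k \geqs 0$ satisfying the hypothesis $2^k \leqs \sub_q(F)$. Next, I would apply Proposition~\ref{subfigure} with the exponent $n+k$ in place of its $n$, which yields a field extension $K/F$ such that $\sub_q(K) = 2^k$, exactly as desired.

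There is no real obstacle here; the content of the corollary is that when $\dim q$ itself is a $2$-power, Proposition~\ref{subfigure} produces a full sequence of powers of $2$ (and no integers are skipped among such powers below $\sub_q(F)$), since the floor function in Proposition~\ref{subfigure} evaluates cleanly. The only verification required is the inequality $2^k \leqs \sub_q(F)$, which is supplied as a hypothesis and ensures that Proposition~\ref{subfigure} is applicable.
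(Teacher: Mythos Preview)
Your proposal is correct and follows essentially the same approach as the paper, which simply states that the result follows immediately from Proposition~\ref{hsub} or Proposition~\ref{subfigure}. You have spelled out the application of Proposition~\ref{subfigure} explicitly, taking the exponent $n+k$ so that $\left\lfloor 2^{n+k}/2^n\right\rfloor = 2^k$, which is exactly the intended reduction.
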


\begin{proof} This follows immediately from Proposition~\ref{hsub} or Proposition~\ref{subfigure}.
\end{proof}

With respect to certain forms $q$ and integers $m$, Proposition~\ref{subfigure} enables us to determine whether or not $m$ is in $\{\sub_q(K)\mid K/F\text{ field extension}\}$.


\begin{prop}\label{submodmax} Let $q$ be a form over $F$ such that $(m+1)\x q$ has maximal splitting for some integer $m<\sub_q(F)$. Then $m\in\{s_q(K)\mid K/F\text{ field extension}\}$ if and only if $m=\left\lfloor\frac {2^n} {\dim q}\right\rfloor$ for some $n\in\mathbb{N}$.
\end{prop}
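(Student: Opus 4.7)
The plan is to prove each direction separately, with the forward implication being an immediate invocation of Proposition~\ref{subfigure} and the reverse implication the content that requires the maximal splitting hypothesis.

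For the ``if'' direction, if $m=\lfloor 2^n/\dim q\rfloor$ for some $n\in\mathbb{N}$, then Proposition~\ref{subfigure} directly places $m$ in the set $\{\sub_q(K)\mid K/F \text{ field extension}\}$ (using here only that $m\leqs\sub_q(F)$, which is immediate from $m<\sub_q(F)$).

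For the ``only if'' direction, suppose there is an extension $K/F$ with $\sub_q(K)=m$. By definition, $(m+1)\times q$ is isotropic over $K$ while $m\times q$ is anisotropic over $K$; and since $m<\sub_q(F)$, the form $(m+1)\times q$ is anisotropic over $F$. By hypothesis $(m+1)\times q$ has maximal splitting, so write $(m+1)\dim q = 2^n+k$ with $0<k\leqs 2^n$; then $i_1((m+1)\times q)=k$. The key step is to sandwich $i_W(((m+1)\times q)_K)$. From the isotropy of $(m+1)\times q$ over $K$ together with the definition of the first Witt index, we obtain $i_W(((m+1)\times q)_K)\geqs i_1((m+1)\times q)=k$. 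Conversely, $(m\times q)_K\subset ((m+1)\times q)_K$ is anisotropic, so the contrapositive of Lemma~\ref{star} yields $\dim(m\times q)\leqs \dim((m+1)\times q)-i_W(((m+1)\times q)_K)$, i.e.\ $i_W(((m+1)\times q)_K)\leqs \dim q$. Combining, $k\leqs\dim q$.

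Substituting this bound into $(m+1)\dim q=2^n+k$ gives $m\dim q=2^n+k-\dim q\leqs 2^n$ and $2^n=(m+1)\dim q-k<(m+1)\dim q$; that is, $m\dim q\leqs 2^n<(m+1)\dim q$. Proposition~\ref{subfigure} then forces $m=\lfloor 2^n/\dim q\rfloor$, completing the proof. I do not expect a genuine obstacle: the argument is a short combination of Lemma~\ref{star}, the definition of $i_1$, and the arithmetic already handled in Proposition~\ref{subfigure}. The only minor subtlety is the degenerate case $m=0$, where the subform $(m\times q)_K$ is empty and Lemma~\ref{star} does not formally apply, but here the bound $k\leqs \dim q$ is automatic from $k\leqs 2^n<\dim q$ (the strict inequality being part of the maximal splitting hypothesis), so the remainder of the computation goes through unchanged.
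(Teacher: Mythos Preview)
Your proof is correct and follows essentially the same approach as the paper's: both directions invoke Proposition~\ref{subfigure}, and for the ``only if'' direction you combine Lemma~\ref{star} with the inequality $i_W(((m+1)\times q)_K)\geqs i_1((m+1)\times q)$ and the maximal splitting hypothesis to obtain $m\dim q\leqs 2^n<(m+1)\dim q$, exactly as in the paper. Your explicit bookkeeping with $k$ and your separate treatment of the $m=0$ edge case are minor elaborations, not a different argument.
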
 

\begin{proof} For $m=\left\lfloor\frac {2^n} {\dim q}\right\rfloor$ for some $n\in\mathbb{N}$, Proposition~\ref{subfigure} implies that we have $m\in\{\sub_q(K)\mid K/F\text{ field extension}\}$. 

Conversely, let $K/F$ be such that $\sub_q(K)=m$. As $m\x q$ is anisotropic over $K$, Lemma~\ref{star} implies that $\dim(m\x q)\leqs \dim((m+1)\x q)-i_W({(m+1)\x q}_K)$. We recall that $i_W({(m+1)\x q}_K)\geqs i_1((m+1)\x q)$, as the form $(m+1)\x q$ is isotropic over $K$. Thus, we have that $\dim(m\x q)\leqs \dim((m+1)\x q)-i_1((m+1)\x q)$. Since $(m+1)\x q$ has maximal splitting, $\dim((m+1)\x q)-i_1((m+1)\x q)= 2^n$ for some $n\in\mathbb{N}$. Hence, it follows that $\dim(m\x q)\leqs 2^n <  \dim((m+1)\x q)$. Invoking Proposition~\ref{subfigure}, we have that $m=\left\lfloor\frac {2^n} {\dim q}\right\rfloor$.
\end{proof}

If $q$ is similar to an $n$-fold Pfister form over $F$ for $n\geqs 0$, then for all $m\in\mathbb{N}$ we have that $(m+1)\x q$ is a neighbour of a Pfister form similar to $2^k\x q$, where $2^{k-1}< m+1\leqs 2^k$. Thus, for all $m\in\mathbb{N}$, the form $(m+1)\x q$ has maximal splitting, whereby Proposition~\ref{submodmax} can be applied to all integers $m<\sub_q(F)$ in this case (see Remark~\ref{6.1rec}).

Theorem~\ref{km}~$(i)$ enables us to record another criterion for the admissibility of a prescribed integer in the set $\{\sub_q(K)\mid K/F \textrm{ field extension}\}$.

\begin{prop}\label{KMusagesub} Let $q$ be an anisotropic form over $F$ and $n\in\mathbb{N}$. If $(n+1)\x q$ is anisotropic over $F$ and $\dim q > i_1((n+1)\x q) - i_1(n\x q)$, then $\sub_q(F((n+1)\x q))=n$.
\end{prop}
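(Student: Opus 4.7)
The plan is to show two things about the field $K=F((n+1)\x q)$: first, that $(n+1)\x q$ is isotropic over $K$, and second, that $n\x q$ is anisotropic over $K$. Together, these conditions give $\sub_q(K)=n$ by the very definition of the $q$-sublevel. The first of these is automatic, since $K$ is the function field of $(n+1)\x q$. So the content of the proof is entirely in showing that $n\x q$ remains anisotropic over $K$.

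To establish this, I would argue by contradiction. Note first that $n\x q$ is anisotropic over $F$, since it embeds as a subform of the anisotropic form $(n+1)\x q$. Now suppose, contrary to what we want, that $n\x q$ becomes isotropic over $K=F((n+1)\x q)$. Then Theorem~\ref{km}~$(i)$, applied to the anisotropic forms $p=n\x q$ and $q'=(n+1)\x q$, yields
\[
\dim(n\x q)-i_1(n\x q)\;\geqs\;\dim((n+1)\x q)-i_1((n+1)\x q).
\]
Rearranging, using $\dim((n+1)\x q)-\dim(n\x q)=\dim q$, this gives
\[
i_1((n+1)\x q)-i_1(n\x q)\;\geqs\;\dim q,
\]
contradicting the standing hypothesis $\dim q>i_1((n+1)\x q)-i_1(n\x q)$. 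Hence $n\x q$ must be anisotropic over $K$, and combined with the isotropy of $(n+1)\x q$ over $K$ we conclude $\sub_q(K)=n$.

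There isn't really a hard step here: the proposition is a clean application of Theorem~\ref{km}~$(i)$, with the hypothesis on $\dim q$ being exactly what is needed to contradict the Karpenko--Merkurjev inequality. The only minor point is to justify that $n\x q$ is anisotropic over $F$ before invoking Theorem~\ref{km}, which follows since $n\x q\subset (n+1)\x q$.
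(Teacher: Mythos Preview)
Your proof is correct and follows essentially the same approach as the paper's own proof, which simply states that Theorem~\ref{km}~$(i)$ yields the anisotropy of $n\times q$ over $F((n+1)\times q)$. You have spelled out the application of Theorem~\ref{km}~$(i)$ via contradiction and made explicit the minor preliminary observation that $n\times q$ is anisotropic over $F$ (as a subform of $(n+1)\times q$), which the paper leaves implicit.
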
  

\begin{proof} Given our assumptions, Theorem~\ref{km}~$(i)$ allows us to conclude that $n\x q$ is anisotropic over $F((n+1)\x q)$, whereby the result follows.
\end{proof}

Although there exists an explicit determination of the possible values of the first Witt index of a given form in terms of its dimension (see \cite{K}), pinpointing the exact value taken by this invariant remains problematic. Thus, the above criterion is difficult to apply in general. In the case where the function field of a given form has an ordering however, the signature of the form with respect to this ordering imposes a natural bound on the first Witt index of the form, and those of its multiples. Hence, for certain forms $q$ over certain fields $F$, we will apply Proposition~\ref{KMusagesub}  to determine further entries of $\{\sub_q(K)\mid K/F \textrm{ field extension}\}$.



\begin{theorem}\label{subvalues} Let $q$ be an anisotropic form over a formally real field $F$ such that $| \sgn_P(q)| =\dim q -2$ for some $P\in X_F$. Then, for $m=\min\{\dim q -1, \sub_q(F) -1\}$, we have that $\{0,\ldots ,m\}\cup\{\sub_q(F)\}\subseteq\{\sub_q(K)\mid K/F \textrm{ field extension}\}$.
\end{theorem}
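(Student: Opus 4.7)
The plan is to realise each integer in $\{0, 1, \ldots, m\} \cup \{\sub_q(F)\}$ as $\sub_q(K)$ for a suitable extension $K/F$. The cases $n = \sub_q(F)$ and $n = 0$ are immediate, taking $K = F$ and $K = F(q)$ respectively (the latter since $q$ is isotropic over its own function field). For $1 \leqs n \leqs m$, I would set $K = F((n+1) \x q)$ and apply Proposition~\ref{KMusagesub} to conclude that $\sub_q(K) = n$.

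Verifying the hypotheses of Proposition~\ref{KMusagesub} is the heart of the argument. Since $n + 1 \leqs \sub_q(F)$, the form $(n+1) \x q$ is anisotropic over $F$. The main task is then to establish the inequality $\dim q > i_1((n+1) \x q) - i_1(n \x q)$. Here I would exploit the ordering $P$: since $|\sgn_P((n+1) \x q)| = (n+1)(\dim q - 2)$, which is strictly less than $(n+1) \dim q$, the form $(n+1) \x q$ is indefinite at $P$, whereby Theorem~\ref{ELW} guarantees that $P$ extends to an ordering $P'$ on $F((n+1) \x q)$. Using the preservation of signatures under field extensions together with the fact that the signature of an anisotropic form is bounded in absolute value by its dimension (applied to the anisotropic part of $((n+1) \x q)_{F((n+1)\x q)}$), one deduces
\[ i_1((n+1) \x q) \leqs \frac{(n+1)\dim q - (n+1)(\dim q - 2)}{2} = n+1. \]
Combined with the trivial bound $i_1(n \x q) \geqs 1$, this yields $i_1((n+1) \x q) - i_1(n \x q) \leqs n \leqs \dim q - 1 < \dim q$, which is precisely the hypothesis required.

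I do not expect any serious obstacle, as the argument reduces to a direct application of Proposition~\ref{KMusagesub} once the signature-based bound on the first Witt index of $(n+1) \x q$ is observed. The only mildly delicate step is the extension of $P$ to the function field $F((n+1) \x q)$, but this follows automatically from the indefiniteness of $q$ at $P$ being inherited by each of its positive multiples. Two incidental checks worth noting are the edge behaviours: when $m = \sub_q(F) - 1$, the form $(m+1) \x q$ is anisotropic by definition of $\sub_q(F)$; when $m = \dim q - 1$, the inequality $\dim q \leqs \sub_q(F)$ is built into the definition of $m$, so $(m+1) \x q = \dim q \x q$ is again anisotropic, a subform of $\sub_q(F) \x q$.
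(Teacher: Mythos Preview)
Your proposal is correct and follows essentially the same route as the paper's proof: for each $n\leqs m$ you pass to $K=F((n+1)\times q)$, use Theorem~\ref{ELW} to extend $P$ and thereby bound $i_1((n+1)\times q)\leqs n+1$ via the signature, and then feed this into Proposition~\ref{KMusagesub}. The only cosmetic difference is that you treat $n=0$ separately by taking $K=F(q)$ directly, whereas the paper runs the same Proposition~\ref{KMusagesub} argument uniformly from $n=0$; both are fine.
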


\begin{proof} As above, $\sub_q(F)\in\{\sub_q(K)\mid K/F \textrm{ field extension}\}$. 

Let $n\in\mathbb{N}$ such that $n\leqs \min\{\dim q -1, \sub_q(F)-1\}$. As $n\leqs \sub_q(F)-1$, we have that $(n+1)\x q$ is anisotropic over $F$. Let $K=F((n+1)\x q)$. Since $q$ is indefinite with respect to $P$, Theorem~\ref{ELW} implies that $P$ extends to $K$. Moreover, as $|\sgn_P((n+1)\x q)|=(n+1)\dim q -2(n+1)$, we have that $i_1((n+1)\x q)\leqs n+1$. As $n+1\leqs\dim q$, we have that $i_1((n+1)\x q)<\dim q + i_1(n\x q)$, whereby Proposition~\ref{KMusagesub} implies that $\sub_q(K)=n$.
\end{proof}


If $\sub_q(F)\leqs \dim q$ for $q$ as above, our determination of $\{\sub_q(K)\mid K/F \textrm{ field extension}\}$ is complete. 

\begin{cor}\label{subvaluescor} Let $q$ be an anisotropic form over a formally real field $F$ such that $| \sgn_P(q)| =\dim q -2$ for some $P\in X_F$. If $\sub_q(F)\leqs \dim q$, then we have that $\{0,\ldots ,\sub_q(F)\}=\{\sub_q(K)\mid K/F \textrm{ field extension}\}$.
\end{cor}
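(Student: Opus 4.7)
The plan is to reduce the corollary to Theorem~\ref{subvalues} by exploiting the extra hypothesis $\sub_q(F)\leqs\dim q$, and then to obtain the reverse inclusion from the monotonicity property of the $q$-sublevel under field extensions.

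First I would establish the containment $\{0,\ldots,\sub_q(F)\}\subseteq\{\sub_q(K)\mid K/F\textrm{ field extension}\}$. The hypothesis $\sub_q(F)\leqs\dim q$ immediately gives $\sub_q(F)-1\leqs\dim q-1$, so the minimum appearing in Theorem~\ref{subvalues} simplifies to $m=\sub_q(F)-1$. Applying Theorem~\ref{subvalues} then yields $\{0,\ldots,\sub_q(F)-1\}\cup\{\sub_q(F)\}=\{0,\ldots,\sub_q(F)\}\subseteq\{\sub_q(K)\mid K/F\textrm{ field extension}\}$.

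For the reverse inclusion, I would invoke Proposition~\ref{basicsub}~$(iii)$, which states $\sub_q(K)\leqs\sub_q(F)$ for every field extension $K/F$. Since $\sub_q(K)\in\mathbb{N}$ (this is finite as it is bounded above by the finite quantity $\sub_q(F)$) and is non-negative by definition, we obtain $\sub_q(K)\in\{0,\ldots,\sub_q(F)\}$ for every extension $K/F$, giving $\{\sub_q(K)\mid K/F\textrm{ field extension}\}\subseteq\{0,\ldots,\sub_q(F)\}$.

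There is no real obstacle here: the work has already been done in Theorem~\ref{subvalues}, and the corollary is essentially a bookkeeping observation that the hypothesis $\sub_q(F)\leqs\dim q$ causes the two integer ranges in the theorem's conclusion to meet, so no gap is left between $\min\{\dim q-1,\sub_q(F)-1\}$ and $\sub_q(F)$. Combining the two inclusions then yields the equality claimed.
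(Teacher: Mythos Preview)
Your proof is correct and takes essentially the same approach as the paper: the paper's proof is the single sentence ``Since $\sub_q(F)\leqs \dim q$, the result follows from invoking Theorem~\ref{subvalues}'', and you have simply spelled out the two inclusions that this entails, including the reverse one via Proposition~\ref{basicsub}~$(iii)$.
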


\begin{proof} Since $\sub_q(F)\leqs \dim q$, the result follows from invoking Theorem~\ref{subvalues}.
\end{proof}


For the forms $q$ treated in Theorem~\ref{subvalues}, the following example demonstrates that we do not necessarily have set equality when $\sub_q(F)>\dim q$.

\begin{example} Let $q$ be a $4$-dimensional form over a formally real field $F$ such that $| \sgn_P(q)| =2$ for some $P\in X_F$ and $\sub_q(F)\geqs 5$ (for example, for $F_0$ a formally real field, one can let $F=F_0(X_1,X_2,X_3,X_4)$ and $q\simeq \< X_1,X_2,X_3,X_4\>$, whereby we have that $\sub_q(F)=\infty$ and $| \sgn_P(q)| =2$ for $P\in X_F$ such that $\{X_1,-X_2,-X_3,-X_4\}\subset P$). By Theorem~\ref{subvalues}, we have that $\{ 0,1,2,3\}\cup\{ \sub_q(F)\}\subseteq \{\sub_{q}(K)\mid K/F \textrm{ field extension}\}$. For $K=F(5\x q)$, it follows from Theorem~\ref{H95} that $4\x q$ is anisotropic over $K$, whereby $\sub_q(K)=4$. Hence, we can conclude that $\{ 0,1,2,3\}\cup\{ \sub_q(F)\}\subsetneq \{\sub_{q}(K)\mid K/F \textrm{ field extension}\}$.
\end{example}

While equality in Theorem~\ref{subvalues} does not generally hold in the case where $\sub_q(F)>\dim q$, it can occur.


\begin{example} Let $q$ be a $3$-dimensional form over a formally real field $F$ such that $q$ is indefinite at $P\in X_F$ and $\sub_q(F)=4$. This can be achieved, for example, by letting $F=F_0(X)$ and $q\simeq \< 1,-a,X\>$ for $F_0$ a formally real field such that $p(F_0)\geqs 5$ and $a\in {F_0}^{\x}$ such that $\ell_{F_0}(a)=5$ (whereby $\< 1\>\perp 4\x \< -a\>$ is anisotropic over $F_0$, implying that its associated Pfister form $4\x\< 1,-a\>$ is anisotropic over $F_0$, and thus that $4\x q$ is anisotropic over $F$ by \cite[Exercise IX.$1$]{LAM}). By Theorem~\ref{subvalues}, $\{ 0,1,2,4\}\subseteq \{\sub_{q}(K)\mid K/F \textrm{ field extension}\}$. Let $K$ be any extension of $F$ such that $4\x q$ is isotropic over $K$. Since $4\x q\simeq (4\x\< 1\>)\otimes q$, we have that $i_W(({4\x q})_{K})\geqs 4$ by Theorem~\ref{WS}. Hence, $3\x q$ is isotropic over $K$ by Lemma~\ref{star}, whereby $\sub_q(K)\leqs 2$. Thus, $\{\sub_q(K)\mid K/F \textrm{ field extension}\}=\{ 0,1,2,4\}$.
\end{example}


For forms $q$ with maximal splitting, we can apply our determination of the first Witt index of their product with Pfister forms, Proposition~\ref{i1maxspl}, to establish the containment of additional integers within the set $\{\sub_q(K)\mid K/F \textrm{ field extension}\}$.





\begin{prop}\label{maxsub} Let $q$ be an anisotropic form over $F$ with maximal splitting. Then $\left\lfloor 2^n-\frac {2^ni_1(q)} {\dim q}\right\rfloor\in \{\sub_q(K)\mid K/F \textrm{ field extension}\}$ for all $n\in\mathbb{N}$ such that $2^n\leqs \sub_q(F)$.
\end{prop}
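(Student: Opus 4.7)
The plan is to realise, for each $n$ with $2^n \leqs \sub_q(F)$, the integer $m := \left\lfloor 2^n - \frac{2^n i_1(q)}{\dim q}\right\rfloor$ as the $q$-sublevel of a specific extension of $F$. The crucial preliminary observation is that, since $q$ has maximal splitting, $\dim q - i_1(q) = 2^k$ for some $k\in\mathbb{N}$, and hence $m = \left\lfloor \frac{2^{n+k}}{\dim q}\right\rfloor$; in particular $m < 2^n$, since $i_1(q)\geqs 1$. I propose the extension $K = F(2^n \x q)$, which is a genuine function field because the hypothesis $2^n \leqs \sub_q(F)$ ensures that $2^n \x q$ is anisotropic over $F$.

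The first step is to show $\sub_q(K) \leqs m$, i.e.\ that $(m+1)\x q$ is isotropic over $K$. Applying Proposition~\ref{i1maxspl} with $\pi \simeq 2^n \x \<1\>$ (so that $\pi\otimes q \simeq 2^n \x q$) yields $i_1(2^n \x q) = 2^n i_1(q)$, whence $i_W((2^n \x q)_K) \geqs 2^n i_1(q)$. By Lemma~\ref{star}, every subform of $2^n \x q$ of dimension exceeding $2^n \dim q - 2^n i_1(q) = 2^{n+k}$ must be isotropic over $K$. Since $m+1 \leqs 2^n$, we have $(m+1)\x q \subset 2^n \x q$, and by the definition of $m$ we obtain $(m+1)\dim q > 2^{n+k}$; hence $(m+1)\x q$ is isotropic over $K$.

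The second step is to show $\sub_q(K) \geqs m$, i.e.\ that $m \x q$ is anisotropic over $K$. Here I would invoke Theorem~\ref{H95}: the chain $\dim(m \x q) = m \dim q \leqs 2^{n+k} < 2^{n+k} + 2^n i_1(q) = \dim(2^n \x q)$ places us directly in the situation of Hoffmann's theorem, yielding the required anisotropy of $m \x q$ over $K = F(2^n \x q)$. Combining the two bounds gives $\sub_q(K) = m$, as desired.

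The main obstacle, such as it is, lies in recognising that the quantity $m$ in the statement is precisely $\lfloor 2^{n+k}/\dim q\rfloor$; this identification is made possible by the maximal splitting hypothesis, which forces $\dim q - i_1(q)$ to be a power of two. Once that rewriting is in hand, the dimension counts required by Lemma~\ref{star} and Theorem~\ref{H95} balance neatly, with Proposition~\ref{i1maxspl} supplying the exact first Witt index that bridges the two inequalities.
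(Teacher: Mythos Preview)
Your proof is correct and follows the same overall strategy as the paper: both take $K = F(2^n \times q)$, use Proposition~\ref{i1maxspl} together with Lemma~\ref{star} to show that $(m+1)\times q$ is isotropic over $K$, and then establish the anisotropy of $m\times q$ over $K$. The one genuine difference lies in this last step: the paper invokes Theorem~\ref{km}~$(i)$ (Karpenko--Merkurjev), whereas you use the more elementary Theorem~\ref{H95} (Hoffmann), which becomes available precisely because your rewriting $m = \lfloor 2^{n+k}/\dim q\rfloor$ exhibits the threshold $2^n\dim q - 2^n i_1(q)$ as the $2$-power $2^{n+k}$. Your route is thus slightly more economical and does not require knowing $i_1(2^n\times q)$ for the anisotropy direction, whereas the paper's use of Theorem~\ref{km}~$(i)$ would continue to work even if that threshold were not a $2$-power.
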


%

\begin{proof} For convenience, let $m=\left\lfloor 2^n-\frac {2^ni_1(q)} {\dim q}\right\rfloor$, whereby $m\leqs 2^n-1$ and thus $(m+1)\subset 2^n\x q$. We note that $2^n\x q$ is anisotropic over $F$, as $\sub_q(F)\geqs 2^n$. Consider the field extension $K=F(2^n\x q)$. Proposition~\ref{i1maxspl} implies that $i_W({2^n\x q}_K)=i_1(2^n\x q)= 2^ni_1(q)$. As $m+1>2^n-\frac {2^ni_1(q)} {\dim q}$, we have that $\dim((m+1)\x q)> 2^n\dim q-2^ni_1(q)$, whereby Lemma~\ref{star} implies that $(m+1)\x q$ is isotropic over $K$. However, $\dim(m\x q)\leqs 2^n\dim q-2^ni_1(q)$, whereby Theorem~\ref{km}~$(i)$ implies that $m\x q$ is anisotropic over $K$. Thus, $\sub_q(K)=m$.
\end{proof}

For an arbitrary form $q$ over $F$, in order to establish that certain integers less than $\sub_q(F)$ do not belong to the set $\{\sub_q(K)\mid K/F \textrm{ field extension}\}$, function fields of associated quadratic forms are once again the appropriate extensions to consider. 

For $q$ an anisotropic form over $F$, let $n\in\mathbb{N}$ be such that $2^n\leqs\sub_q(F)$. Corollary~\ref{2powersub} states that $2^n\in\{\sub_q(K)\mid K/F \textrm{ field extension}\}$ whenever the dimension of $q$ is a $2$-power. Moreover, $\left\lfloor 2^n-\frac {2^ni_1(q)} {\dim q}\right\rfloor\in \{\sub_q(K)\mid K/F \textrm{ field extension}\}$ for all forms $q$ with maximal splitting, by Proposition~\ref{maxsub}. Interestingly, our next result suggests that these classes of forms represent extreme cases, as we can invoke our lower bound on the first Witt index of multiples of Pfister forms, Theorem~\ref{i1bound}, to show that there do not exist any forms $q$ whose sublevel lies between these integers.






%
%

\begin{theorem}\label{inadmissiblesub} Let $q$ be an anisotropic form over F. If $m\in\left(2^n-\frac {2^ni_1(q)} {\dim q},2^n\right)$ for some $n\in\mathbb{N}$ such that $2^n\leqs \sub_q(F)$, then $m\notin \{\sub_{q}(K)\mid K/F \textrm{ field extension}\}$.
\end{theorem}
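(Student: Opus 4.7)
The plan is to argue by contradiction: suppose there is some extension $K/F$ with $\sub_q(K) = m$ for $m$ in the given open interval, and derive a contradiction from the lower bound on $i_1$ of a multiple of a Pfister form.

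First I would make the reduction to $2^n \times q$. Since $m < 2^n \leqs \sub_q(F)$, the form $2^n \times q$ is anisotropic over $F$. On the other hand, $\sub_q(K) = m$ forces $(m+1)\times q$ to be isotropic over $K$; as $m+1 \leqs 2^n$, the form $(m+1)\times q$ is a subform of $2^n \times q$, so $2^n \times q$ is isotropic over $K$. This is the natural setup for bringing the Pfister-multiple theory into play: $2^n \times q$ is the tensor product of $q$ with the Pfister form $2^n \times \langle 1 \rangle$.

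Next I would invoke Theorem~\ref{i1bound} applied to the Pfister form $\pi = 2^n \times \langle 1 \rangle$ to obtain
\[
i_1(2^n \times q) \geqs (\dim \pi)\, i_1(q) = 2^n i_1(q).
\]
Since $2^n \times q$ is anisotropic over $F$ but isotropic over $K$, the standard inequality $i_W((2^n \times q)_K) \geqs i_1(2^n \times q)$ yields
\[
i_W\bigl((2^n \times q)_K\bigr) \geqs 2^n i_1(q).
\]

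Finally I would apply Lemma~\ref{star} to the subform $m\times q \subset 2^n \times q$ over $K$. The hypothesis $m > 2^n - \tfrac{2^n i_1(q)}{\dim q}$ rearranges to $m \dim q > 2^n \dim q - 2^n i_1(q)$, and since all quantities involved are integers this sharpens to
\[
m \dim q \geqs 2^n \dim q - 2^n i_1(q) + 1 \geqs \dim(2^n \times q) - i_W\bigl((2^n\times q)_K\bigr) + 1.
\]
Thus Lemma~\ref{star} forces $m \times q$ to be isotropic over $K$, contradicting $\sub_q(K) = m$.

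The only subtle point is the integrality step that turns the strict inequality on $m$ into the $+1$ needed for Lemma~\ref{star}; once Theorem~\ref{i1bound} is in hand, the rest is bookkeeping. The substantive input — and the main obstacle overcome elsewhere in the paper — is precisely the improved lower bound $i_1(\pi \otimes q) \geqs (\dim \pi) i_1(q)$ from Theorem~\ref{i1bound}; without it, only the weaker bound $i_W \geqs \dim \pi = 2^n$ would be available, and that would not suffice to rule out $m$ in the stated interval.
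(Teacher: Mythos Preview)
Your proof is correct and follows essentially the same approach as the paper: reduce to $2^n\times q$, invoke Theorem~\ref{i1bound} to get $i_W((2^n\times q)_K)\geqs 2^n i_1(q)$, and then apply Lemma~\ref{star} to the subform $m\times q$ to force its isotropy. The paper phrases the conclusion directly (showing $\sub_q(K)\leqs m-1$ for any $K$ with $(m+1)\times q$ isotropic) rather than by contradiction, but the argument is the same.
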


\begin{proof} Since $2^n\leqs \sub_q(F)$, we have that $2^n\x q$ is anisotropic over $F$. Moreover, as $m< 2^n$, we have that $(m+1)\x q\subset 2^n\x q$. Let $K/F$ be such that $(m+1)\x q$ is isotropic over $K$. As $2^n\x q\simeq (2^n\x\< 1\>)\otimes q$, we have that $i_W((2^n\x q)_K)\geqs 2^ni_1(q)$ by Theorem~\ref{i1bound}. Since $m>2^n-\frac {2^ni_1(q)} {\dim q}$, it follows that $m\x q\subset 2^n\x q$ of codimension less than $i_W((2^n\x q)_K)$. Thus, Lemma~\ref{star} implies that $m\x q$ is isotropic over $K$, whereby $\sub_q(K)\leqs m-1$.
\end{proof}



As a consequence of Theorem~\ref{inadmissiblesub}, one can see that Theorem~\ref{subvalues} does not hold for arbitrary forms.


\begin{example} Let $\pi$ be a $3$-fold Pfister form over $F$ such that $4\x \pi$ is anisotropic. Let $q$ be a $6$-dimensional neighbour of $\pi$, whereby $i_1(q)=2$ and $\sub_q(F)\geqs 4$. Hence, we can conclude that $3\notin\{\sub_q(K)\mid K/F \textrm{ field extension}\}$ by Theorem~\ref{inadmissiblesub}. Similarly, if we have that $\sub_q(F)\geqs 32$ for $q$ as above, then $3,6,7,11,\ldots ,15,22, \ldots, 31 \notin\{\sub_q(K)\mid K/F \textrm{ field extension}\}$.
\end{example}

\section{Values of the $q$-level}

In analogy with the preceding section, we study the behaviour of the $q$-level with respect to field extensions. As before, we will seek to determine the entries of the set $\{s_q(K)\mid K/F \textrm{ field extension}\}$. As with the $q$-sublevel, $s_q(F)$ always belongs to this set, with the remaining entries being less than $s_q(F)$. 


Proposition~\ref{hsub} is an analogue of \cite[Proposition 3.13~(2)]{BGBM}, where Theorem~\ref{H95} was invoked to obtain a criterion for inclusion in $\{s_q(K)\mid K/F \textrm{ field extension}\}$. We state a slightly-modified version of this result below.


\begin{prop}\label{hs} (Berhuy, Grenier-Boley, Mahmoudi) Let $q$ be an anisotropic form over $F$. If an integer $m\leqs s_q(F)$ is such that $1+(m-1)\dim q\leqs 2^n< 1+m\dim q$ for some $n\in\mathbb{N}$, then we have that $m\in\{s_q(K)\mid K/F \textrm{ field extension}\}$.
\end{prop}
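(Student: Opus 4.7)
The plan is to mimic the proof of Proposition~\ref{hsub}, with the form $(m+1)\x q$ replaced by $\<1\>\perp m\x q$. First I would dispose of the trivial case $m=s_q(F)$ by taking $K=F$. For $m<s_q(F)$, the definition of the $q$-level guarantees that $\<1\>\perp m\x q$ is anisotropic over $F$, so one may set $K=F(\<1\>\perp m\x q)$. By construction, $\<1\>\perp m\x q$ is isotropic over $K$, which immediately gives $s_q(K)\leqs m$.

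The heart of the argument is the reverse inequality $s_q(K)\geqs m$, i.e.\ the anisotropy of $\<1\>\perp(m-1)\x q$ over $K$. This is exactly the situation of Theorem~\ref{H95}: the form $\<1\>\perp(m-1)\x q$ is anisotropic over $F$ (since $m-1<s_q(F)$), its dimension equals $1+(m-1)\dim q$, and the form defining the function field has dimension $1+m\dim q$. The hypothesis $1+(m-1)\dim q\leqs 2^n<1+m\dim q$ is precisely the dimension-gap condition required by Hoffmann's theorem, so anisotropy is preserved in passing from $F$ to $K$. Combining both inequalities gives $s_q(K)=m$.

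I anticipate no substantive obstacle here; the argument is a line-for-line analogue of the proof of Proposition~\ref{hsub}. The only point requiring a little care is to correctly pair the two forms so that their dimensions straddle the prescribed $2$-power: anisotropy is asserted for $\<1\>\perp(m-1)\x q$, while the function field is formed over $\<1\>\perp m\x q$.
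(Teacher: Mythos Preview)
Your proposal is correct and matches the approach the paper indicates: the paper does not supply its own proof but attributes the result to \cite{BGBM}, noting explicitly that Theorem~\ref{H95} was invoked there, and your argument is a direct analogue of the paper's proof of Proposition~\ref{hsub} with $\<1\>\perp m\x q$ in place of $(m+1)\x q$. The only minor remark is that anisotropy of $\<1\>\perp(m-1)\x q$ over $F$ already follows from $m\leqs s_q(F)$ (not just $m-1<s_q(F)$), but this is cosmetic.
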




In our next result, we determine the integers to which Proposition~\ref{hs} applies.


\begin{prop}\label{sfigure} Let $q$ be an anisotropic form over $F$. An integer $m\leqs s_q(F)$ is such that $1+(m-1)\dim q\leqs 2^n< 1+m\dim q$ if and only if $m=\left\lceil\frac {2^n} {\dim q}\right\rceil$ for some $n\in\mathbb{N}$. In particular, $m=\left\lceil\frac {2^n} {\dim q}\right\rceil\in\{s_q(K)\mid K/F \textrm{ field extension}\}$.
\end{prop}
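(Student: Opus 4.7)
The plan is to model the proof on that of Proposition~\ref{subfigure}, exploiting the identity $\lceil n/m\rceil = \lfloor (n-1)/m\rfloor+1$ recalled in the preliminaries. The first step is to rewrite the condition $1+(m-1)\dim q\leqs 2^n<1+m\dim q$ in the equivalent, more transparent form $m-1\leqs \frac{2^n-1}{\dim q}< m$, obtained by subtracting $1$ throughout and dividing by $\dim q$.

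For the forward implication, the non-strict left-hand inequality combined with the integrality of $m$ yields $m-1\leqs \lfloor(2^n-1)/\dim q\rfloor$, while the strict right-hand inequality $\frac{2^n-1}{\dim q}< m$ gives $\lfloor(2^n-1)/\dim q\rfloor\leqs m-1$. Together these force $m-1 = \lfloor(2^n-1)/\dim q\rfloor$, and hence $m = \lceil 2^n/\dim q\rceil$ by the aforementioned identity. Conversely, writing $m=\lceil 2^n/\dim q\rceil = \lfloor(2^n-1)/\dim q\rfloor+1$, the defining inequality for the floor gives $(m-1)\dim q\leqs 2^n-1<m\dim q$, which is exactly $1+(m-1)\dim q\leqs 2^n<1+m\dim q$.

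The ``in particular'' assertion is then immediate from Proposition~\ref{hs}, applied with $m = \lceil 2^n/\dim q\rceil$ (provided this integer does not exceed $s_q(F)$; otherwise there is nothing to check).

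I do not anticipate any substantive obstacle here, as the argument amounts to arithmetic bookkeeping between the floor, ceiling, and the two-sided inequality about $2^n$. The only minor point requiring a little care is keeping strict versus non-strict inequalities aligned when passing to the floor function, but this is precisely handled by the identity $\lceil n/m\rceil = \lfloor (n-1)/m\rfloor+1$ cited in the preliminaries.
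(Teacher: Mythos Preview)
Your proof is correct and follows essentially the same route as the paper's: both arguments are arithmetic manipulations of the double inequality using the identity $\lceil n/m\rceil=\lfloor(n-1)/m\rfloor+1$, and both conclude the ``in particular'' clause by invoking Proposition~\ref{hs}. Your rewriting of the inequality as $m-1\leqs\frac{2^n-1}{\dim q}<m$ is marginally tidier than the paper's version (which bounds $m$ from above by $\frac{2^n+\dim q-1}{\dim q}$ and from below by $\frac{2^n-1}{\dim q}$ before squeezing), but the two are otherwise interchangeable.
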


\begin{proof} Let $m\leqs s_q(F)$ be such that $1+(m-1)\dim q\leqs 2^n< 1+m\dim q$ for some $n\in\mathbb{N}$. Since $1+(m-1)\dim q\leqs 2^n$, it follows that $m\leqs \frac {2^n+\dim q-1} {\dim q}$. Moreover, as $2^n< 1+m\dim q$, we have that $m>\frac {2^n-1} {\dim q}$. Thus, since $m\in\mathbb{N}$, it follows that $m\geqs \left\lfloor \frac {2^n-1} {\dim q}\right\rfloor+1=\left\lfloor \frac {2^n+\dim q-1} {\dim q}\right\rfloor$. Hence, combining the above, we have that $m=\left\lfloor \frac {2^n+\dim q-1} {\dim q}\right\rfloor=\left\lceil\frac {2^n} {\dim q}\right\rceil$.

Conversely, letting $m=\left\lceil\frac {2^n} {\dim q}\right\rceil$, we clearly have that $\dim\left(\< 1\>\perp m\x q\right)> 2^n$. Moreover, as $\left\lceil \frac {2^n} {\dim q}\right\rceil=\left\lfloor \frac {2^n-1} {\dim q}\right\rfloor+1$, we also have that $\dim\left(\< 1\>\perp \left(m-1\right)\x q\right)\leqs 2^n$. 


The last statement follows from applying Proposition~\ref{hs} to $m=\left\lceil\frac {2^n} {\dim q}\right\rceil$.
\end{proof}




\begin{cor}\label{lows} Letting $q$ be an anisotropic form over $F$, we have that $\{1,2\}\subseteq\{s_q(K)\mid K/F \textrm{ field extension}\}$.
\end{cor}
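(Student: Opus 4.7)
The plan is to reduce the corollary to two applications of Proposition~\ref{sfigure}, by exhibiting $n_1,n_2\in\mathbb{N}$ for which $\left\lceil 2^{n_1}/\dim q\right\rceil = 1$ and $\left\lceil 2^{n_2}/\dim q\right\rceil = 2$. The ``in particular'' clause of Proposition~\ref{sfigure} will then yield the desired containment directly, and the whole argument will consist of two elementary arithmetic verifications together with a check that the resulting values of $m$ lie within the admissible range.

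For the value $1$, the natural choice is $n_1 = 0$: since $\dim q\geqs 1$, one has $\left\lceil 1/\dim q\right\rceil = 1$, and the bound $1\leqs s_q(F)$ is automatic from the definition of the $q$-level as a positive integer (or $\infty$). For the value $2$, I would take $n_2$ to be the least positive integer satisfying $2^{n_2} > \dim q$. Then $2^{n_2-1}\leqs \dim q$, which gives $\dim q < 2^{n_2}\leqs 2\dim q$, and hence $\left\lceil 2^{n_2}/\dim q\right\rceil = 2$. Invoking Proposition~\ref{sfigure} at $n_1$ and $n_2$ then places both $1$ and $2$ in $\{s_q(K)\mid K/F\textrm{ field extension}\}$.

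The only delicate point, and the main obstacle, is ensuring the applicability of Proposition~\ref{sfigure} in the case $m=2$, which requires $s_q(F)\geqs 2$. This is tacit in the corollary, since otherwise $\langle 1\rangle\perp q$ would already be isotropic over $F$ and would remain so over every extension, forcing $\{s_q(K)\mid K/F\} = \{1\}$ by monotonicity of the $q$-level. Once this is granted, both applications of Proposition~\ref{sfigure} go through without further complication.
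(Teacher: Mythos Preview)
Your argument follows the same route as the paper's: both reduce to two applications of Proposition~\ref{sfigure}, taking $n=0$ to obtain the value $1$ and choosing $n$ with $\dim q < 2^{n}\leqs 2\dim q$ to obtain the value $2$. Your caveat that the case $m=2$ requires the tacit hypothesis $s_q(F)\geqs 2$ is correct and is not made explicit in the paper's proof either; without it, an anisotropic $q$ with $-1\in D_F(q)$ would give $\{s_q(K)\mid K/F\}=\{1\}$ and furnish a counterexample to the corollary as stated.
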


\begin{proof} For $n=0$, Proposition~\ref{sfigure} implies that $1\in\{s_q(K)\mid K/F \textrm{ field extension}\}$.

For $\dim q= 2^n+k$ for some $n,k\in\mathbb{N}$ where $0\leqs k < 2^n$, Proposition~\ref{sfigure} implies that $\left\lceil\frac {2^n} {\dim q}\right\rceil=2\in\{s_q(K)\mid K/F \textrm{ field extension}\}$.
\end{proof}



\begin{cor}\label{2powers} Let $q$ be an anisotropic form over $F$ of dimension $2^n$ for some $n\in\mathbb{N}$. Then $2^k\in\{s_q(K)\mid K/F \textrm{ field extension}\}$ for all integers $k\geqs 0$ such that $2^k\leqs s_q(F)$.
\end{cor}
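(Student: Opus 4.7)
The plan is to verify directly that each power of two meeting the size constraint falls into the scope of Proposition~\ref{sfigure} (equivalently, Proposition~\ref{hs}). Specifically, for a fixed $k \geqs 0$ with $2^k \leqs s_q(F)$, I want to exhibit an integer $N \in \mathbb{N}$ such that $\left\lceil 2^N / \dim q\right\rceil = 2^k$; Proposition~\ref{sfigure} then delivers the desired containment $2^k \in \{s_q(K)\mid K/F \textrm{ field extension}\}$.

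The choice is forced by the hypothesis $\dim q = 2^n$: taking $N = n+k$ gives
\[
\left\lceil\frac{2^{N}}{\dim q}\right\rceil \;=\; \left\lceil\frac{2^{n+k}}{2^{n}}\right\rceil \;=\; 2^k.
\]
Since $2^k \leqs s_q(F)$ by hypothesis, Proposition~\ref{sfigure} applies to the integer $m = 2^k$ and yields $2^k \in \{s_q(K)\mid K/F \textrm{ field extension}\}$. For a reader preferring the more elementary formulation, the same choice $N = n+k$ can be checked directly against the hypothesis of Proposition~\ref{hs}: one needs $1 + (m-1)\dim q \leqs 2^N < 1 + m\dim q$, which here reads $1 + (2^k - 1)2^n \leqs 2^{n+k} < 1 + 2^{n+k}$, and both inequalities are trivial (the left is $1 \leqs 2^n$, the right is automatic).

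There is no genuine obstacle here; the corollary is a direct specialisation. The only thing worth flagging is that having $\dim q$ be a power of two is precisely what makes the function $N \mapsto \lceil 2^N / \dim q \rceil$ hit every non-negative power of two, so the conclusion cannot be expected to persist without this dimensional hypothesis.
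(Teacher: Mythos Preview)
Your argument is correct and is precisely the approach the paper takes: the paper's proof is a single line stating that the result follows directly from Proposition~\ref{hs} or Proposition~\ref{sfigure}, and you have simply written out the verification that $m=2^k=\left\lceil 2^{n+k}/\dim q\right\rceil$ explicitly.
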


\begin{proof} This follows directly from Proposition~\ref{hs} or Proposition~\ref{sfigure}.
\end{proof}

With respect to certain forms $q$ and integers $m$, Proposition~\ref{sfigure} enables us to determine whether or not $m$ is in $\{s_q(K)\mid K/F\text{ field extension}\}$.

\begin{prop}\label{modmax} Let $q$ be a form over $F$ such that $\< 1\>\perp m\x q$ has maximal splitting for some integer $m< s_q(F)$. Then $m\in\{s_q(K)\mid K/F\text{ field extension}\}$ if and only if $m=\left\lceil \frac {2^n} {\dim q}\right\rceil$ for some $n\in\mathbb{N}$.
\end{prop}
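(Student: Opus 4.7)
The plan is to mirror the proof of Proposition~\ref{submodmax}, translating from the sublevel $\sub_q$ to the $q$-level $s_q$ and invoking Proposition~\ref{sfigure} in place of Proposition~\ref{subfigure}. The forward direction is immediate: if $m=\left\lceil 2^n/\dim q\right\rceil$ for some $n\in\mathbb{N}$, Proposition~\ref{sfigure} directly produces a field extension $K/F$ with $s_q(K)=m$, so $m\in\{s_q(K)\mid K/F \textrm{ field extension}\}$.

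For the converse, suppose $K/F$ satisfies $s_q(K)=m$, and set $\varphi:=\<1\>\perp m\x q$ and $\tau:=\<1\>\perp (m-1)\x q$. Since $m<s_q(F)$, the form $\varphi$ is anisotropic over $F$, so our maximal splitting hypothesis means $\dim\varphi-i_1(\varphi)=2^n$ for some $n\in\mathbb{N}$, i.e., $i_1(\varphi)=1+m\dim q-2^n$. By definition of the $q$-level, $\varphi_K$ is isotropic while $\tau_K$ is anisotropic. The contrapositive of Lemma~\ref{star} applied to $\tau\subset\varphi$ over $K$ gives $\dim\tau\leqs\dim\varphi-i_W(\varphi_K)$, that is, $i_W(\varphi_K)\leqs \dim q$. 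On the other hand, the isotropy of $\varphi_K$ supplies the lower bound $i_W(\varphi_K)\geqs i_1(\varphi)=1+m\dim q-2^n$.

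Combining these two bounds yields $1+(m-1)\dim q\leqs 2^n$, while the definition of maximal splitting forces $2^n<\dim\varphi=1+m\dim q$. Thus $m$ satisfies $1+(m-1)\dim q\leqs 2^n<1+m\dim q$, and Proposition~\ref{sfigure} gives $m=\left\lceil 2^n/\dim q\right\rceil$, as required. The only conceptual step is the double bound on $i_W(\varphi_K)$, where Lemma~\ref{star} supplies the upper bound from the anisotropy of $\tau_K$ and the maximal splitting of $\varphi$ supplies the lower bound from the isotropy of $\varphi_K$; once this is in place, the rest is a direct transcription of Proposition~\ref{submodmax}.
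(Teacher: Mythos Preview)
Your proof is correct and follows essentially the same approach as the paper: both use Proposition~\ref{sfigure} for the forward direction, and for the converse both combine the contrapositive of Lemma~\ref{star} (giving $\dim\tau\leqs\dim\varphi-i_W(\varphi_K)$) with the bound $i_W(\varphi_K)\geqs i_1(\varphi)$ from isotropy to trap $2^n$ between $\dim\tau$ and $\dim\varphi$. The only cosmetic difference is that you write out $i_1(\varphi)$ numerically and phrase the argument as a double bound on $i_W(\varphi_K)$, whereas the paper chains the inequalities directly on dimensions.
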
 

\begin{proof} For $m=\left\lceil \frac {2^n} {\dim q}\right\rceil$ for some $n\in\mathbb{N}$, Proposition~\ref{sfigure} implies that we have $m\in\{s_q(K)\mid K/F\text{ field extension}\}$. 

Conversely, let $K/F$ be such that $s_q(K)=m$. As $\< 1\>\perp (m-1)\x q$ is anisotropic over $K$, Lemma~\ref{star} implies that $\dim(\< 1\>\perp (m-1)\x q)\leqs \dim(\< 1\>\perp m\x q)-i_W({\< 1\>\perp m\x q}_K)$. We recall that $i_W({\< 1\>\perp m\x q}_K)\geqs i_1(\< 1\>\perp m\x q)$, as the form $\< 1\>\perp m\x q$ is isotropic over $K$. Thus, we have that $\dim(\< 1\>\perp (m-1)\x q)\leqs \dim(\< 1\>\perp m\x q)-i_1(\< 1\>\perp m\x q)$. Since $\< 1\>\perp m\x q$ has maximal splitting, $\dim(\< 1\>\perp m\x q)-i_1(\< 1\>\perp m\x q)= 2^n$ for some $n\in\mathbb{N}$. Hence, it follows that $\dim(\< 1\>\perp (m-1)\x q)\leqs 2^n <  \dim(\< 1\>\perp m\x q)$. Invoking Proposition~\ref{sfigure}, we have that $m=\left\lceil \frac {2^n} {\dim q}\right\rceil$.
\end{proof}



If $q$ is an $n$-fold Pfister form over $F$ for $n\geqs 0$, then for all $m\in\mathbb{N}\setminus \{ 0\}$ we have that $\< 1\>\perp m\x q$ is a neighbour of the Pfister form $2^k\x q$, where $2^{k-1}\leqs m< 2^k$. Thus, for all $m\in\mathbb{N}\setminus \{ 0\}$, the form $\< 1\>\perp m\x q$ has maximal splitting, whereby Proposition~\ref{modmax} can be applied to all positive integers $m< s_q(F)$ in this case (see Remark~\ref{6.1rec}). Unlike the situation with respect to the $q$-sublevel however, we note that this observation does not apply to forms that are similar to $q$ (in particular, see Remark~\ref{section3.3}).

As with the $q$-sublevel, Theorem~\ref{km}~$(i)$ enables us to record another criterion for the admissibility of a prescribed integer in the set $\{s_q(K)\mid K/F \textrm{ field extension}\}$.

\begin{prop}\label{KMusages} Let $q$ be an anisotropic form over $F$ and $n\in\mathbb{N}$. If $\< 1\>\perp n\x q$ is anisotropic over $F$ and $\dim q > i_1(\< 1\>\perp n\x q)- i_1(\< 1\>\perp (n-1)\x q)$, then $s_q(F( \< 1\>\perp n\x q))=n$.
\end{prop}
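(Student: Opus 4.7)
The plan is to mirror the proof of Proposition~\ref{KMusagesub} very closely, with the forms $(n+1)\x q$ and $n\x q$ replaced by $\< 1\>\perp n\x q$ and $\< 1\>\perp (n-1)\x q$ respectively.

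First I would set $K=F(\< 1\>\perp n\x q)$. The inequality $s_q(K)\leqs n$ is immediate, since $\< 1\>\perp n\x q$ is isotropic over its own function field. The task is therefore to show $s_q(K)\geqs n$, i.e.\ that $\< 1\>\perp (n-1)\x q$ remains anisotropic over $K$ (note that this form is anisotropic over $F$ as a subform of $\< 1\>\perp n\x q$, so there is no issue applying Theorem~\ref{km} to it).

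To this end, I would argue by contradiction. Suppose $\< 1\>\perp (n-1)\x q$ is isotropic over $K = F(\< 1\>\perp n\x q)$. Then Theorem~\ref{km}~$(i)$, applied with $p=\< 1\>\perp (n-1)\x q$ and with $\< 1\>\perp n\x q$ playing the role of ``$q$'' in the statement of that theorem, yields
\[
\dim\bigl(\< 1\>\perp (n-1)\x q\bigr)-i_1\bigl(\< 1\>\perp (n-1)\x q\bigr)\geqs \dim\bigl(\< 1\>\perp n\x q\bigr)-i_1\bigl(\< 1\>\perp n\x q\bigr).
\]
Since the two dimensions differ by exactly $\dim q$, this rearranges to
\[
i_1\bigl(\< 1\>\perp n\x q\bigr)-i_1\bigl(\< 1\>\perp (n-1)\x q\bigr)\geqs \dim q,
\]
contradicting the hypothesis. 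Hence $\< 1\>\perp (n-1)\x q$ is anisotropic over $K$, giving $s_q(K)\geqs n$, and combined with the upper bound we conclude $s_q(K)=n$.

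There is no serious obstacle: the argument is a direct Karpenko–Merkurjev computation, and the only thing to verify carefully is that the subform $\< 1\>\perp (n-1)\x q$ is anisotropic over $F$ (which is immediate) so that Theorem~\ref{km} applies. The proof should occupy only a few lines, paralleling Proposition~\ref{KMusagesub} almost verbatim.
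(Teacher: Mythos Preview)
Your proposal is correct and takes essentially the same approach as the paper: invoke Theorem~\ref{km}~$(i)$ to conclude that $\< 1\>\perp (n-1)\x q$ remains anisotropic over $F(\< 1\>\perp n\x q)$, from which $s_q(F(\< 1\>\perp n\x q))=n$ follows. The paper states this in a single sentence, but your spelled-out contradiction argument is exactly the reasoning underlying it.
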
  

\begin{proof} Given our assumptions, Theorem~\ref{km}~$(i)$ allows us to conclude that $\< 1\>\perp (n-1)\x q$ is anisotropic over $F( \< 1\>\perp n\x q)$, whereby the results follow.
\end{proof}


For $q$ a form over $F$ of dimension $1$ or $2$ (respectively $3$) such that $s_q(F)=\infty$, \cite[Question 6.1]{BGBM} asks whether all elements of $\{s_{q}(K)\mid K/F \textrm{ field extension}\}$ are of the form $2^k$ (respectively $\frac {2^{2k}+2}{3}$ or $\frac {2^{2k+1}+1}{3}$) where $k\in\mathbb{N}$. Our next result shows that this is not the case. Indeed, for all $n\in\mathbb{N}\setminus \{0\}$, there exist $n$-dimensional forms $q$ over ordered fields $F$ that can attain any prescribed integer less than their level over $F$ as their level over a suitable extension.


\begin{theorem}\label{svalues} Let $q$ be a form over a formally real field $F$ such that $\sgn_P(q)=-\dim q$ for some $P\in X_F$. Then $\{1,\ldots ,s_q(F)\}=\{s_q(K)\mid K/F \textrm{ field extension}\}$.
\end{theorem}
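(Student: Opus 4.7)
My plan is to establish the non-trivial inclusion $\{1,\ldots ,s_q(F)\}\subseteq\{s_q(K)\mid K/F \textrm{ field extension}\}$ by exhibiting, for each $m$ in this range, a witness extension $K/F$. The case $m = s_q(F)$ is settled by $K = F$, so suppose $1\leqs m < s_q(F)$. Then $\<1\>\perp m\x q$ is anisotropic over $F$ and of dimension at least two, so I take $K := F(\<1\>\perp m\x q)$; over $K$ this form becomes isotropic, giving $s_q(K)\leqs m$. The task reduces to showing that $\<1\>\perp (m-1)\x q$ remains anisotropic over $K$.

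The first step is to transport the ordering $P$ to $K$. Setting $\vf := \<1\>\perp m\x q$, the signature hypothesis $\sgn_P(q) = -\dim q$ gives $\sgn_P(\vf) = 1 - m\dim q$, and hence $|\sgn_P(\vf)|\leqs m\dim q -1 < 1+m\dim q = \dim\vf$. Thus $\vf$ is indefinite at $P$, and Theorem~\ref{ELW} yields an extension $P'\in X_K$ of $P$, with $\sgn_{P'}(\vf) = \sgn_P(\vf)$.

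The second step is to compute $i_1(\vf)$. Since $\vf_K$ is isotropic and the standard signature bound gives $2i_W(\vf_K)\leqs \dim\vf - |\sgn_{P'}(\vf)|\leqs 2$, we deduce $i_1(\vf) = 1$. For $m\geqs 2$, the form $\<1\>\perp (m-1)\x q$ is anisotropic over $F$ of dimension at least two, so $i_1(\<1\>\perp (m-1)\x q)\geqs 1$; consequently $i_1(\<1\>\perp m\x q) - i_1(\<1\>\perp (m-1)\x q)\leqs 0 < \dim q$, and Proposition~\ref{KMusages} delivers $s_q(K) = m$. The case $m = 1$ is immediate, since $\<1\>\perp q$ is isotropic over $K = F(\<1\>\perp q)$, forcing $s_q(K)=1$.

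The argument is largely a matter of signature bookkeeping, and I do not anticipate substantive difficulty. The only minor subtlety is ensuring that the signature bound holds uniformly across the small-dimensional edge cases ($\dim q = 1$, $m = 1$, or $m\dim q = 1$), which is absorbed by the strict inequality $|\sgn_P(\vf)|<\dim\vf$; once $i_1(\vf) = 1$ is secured, Proposition~\ref{KMusages} closes the proof.
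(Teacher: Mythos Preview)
Your proof is correct and follows essentially the same approach as the paper: both take $K = F(\<1\>\perp m\x q)$, extend $P$ to $K$ via Theorem~\ref{ELW} using the signature computation $|\sgn_P(\<1\>\perp m\x q)| = m\dim q - 1$, deduce $i_1(\<1\>\perp m\x q) = 1$, and then apply the Karpenko--Merkurjev bound (you via Proposition~\ref{KMusages}, the paper directly via Theorem~\ref{km}~$(i)$) to conclude that $\<1\>\perp (m-1)\x q$ stays anisotropic over $K$. Your treatment of the boundary case $m=1$ and the signature bound is slightly more explicit, but the argument is the same.
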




\begin{proof} Clearly $\{s_q(K)\mid K/F \textrm{ field extension}\}\subseteq\{1,\ldots ,s_q(F)\}$, as $s_q(K)\leqs s_q(F)$.

Let $n\in\mathbb{N}\setminus \{ 0\}$ be such that $n< s_q(F)$, whereby $\<1\>\perp n\x q$ is anisotropic over $F$. Let $K=F(\<1\>\perp n\x q)$. Since $q$ is indefinite with respect to $P$, Theorem~\ref{ELW} implies that $P$ extends to $K$. Moreover, as $|\sgn_P(\snq) | =n\dim q-1$, we have that $i_1(\snq)=1$. Thus, Theorem~\ref{km}~$(i)$ implies that $\snlq$ is anisotropic over $K$, whereby $s_q(K)=n$.
\end{proof}


\begin{remark}\label{remsandsub} One can invoke the above proof to establish that, in general, the $q$-level does not impose an upper bound on the $q$-sublevel. Let $q$ be a form over a formally real field $F$ such that $\sgn_P(q)=-\dim q$ for some $P\in X_F$. As per the above proof, for a positive integer $n< s_q(F)$ and $K=F(\<1\>\perp n\x q)$, one has that $s_q(K)=n$. As $\sgn_P(q)= -\dim q$ and $P$ extends to $K$, it follows that $\sub_q(K)=\infty$. Furthermore, there exist field extensions $L/K$ such that $s_q(L)=n$ and $\sub_q(L)$ takes arbitrarily larger finite values. In particular, letting $m\in\mathbb{N}$ be such that $1+(n-1)\dim q\leqs m\dim q\leqs 2^{r}< (m+1)\dim q$ for some $r\in\mathbb{N}$, if one sets $L=K((m+1)\x q)$, then Theorem~\ref{H95} implies that $s_q(L)=n$ and $\sub_q(L)=m$.
\end{remark}

The following remark addresses a comment at the end of Section $3.3$ in \cite{BGBM}.

\begin{remark}\label{section3.3}
Since the sums of squares in a field $F$ are precisely the elements that are positive at all orderings of $F$ (see \cite[Theorem VIII.$1.12$]{LAM}), the existence of an ordering of $F$ at which $a\in F^{\x}$ is negative implies that the length of $a$ is infinite. Let $F$ be a field such that there exists $a\in F^{\x}$ with $a$ positive at some $P\in X_F$ and $a$ negative at some $Q\in X_F$, whereby $\ell_F(a)=\infty$. Applying Theorem~\ref{svalues} to $q\simeq \< -a\>$, one sees that the length of $a$ can equal any prescribed integer over a suitable extension (in particular, for all $n\in\mathbb{N}\setminus \{ 0\}$, we have that $\ell_K(a)= s_q(K)=n$ for $K=F(\< 1\>\perp n\x q)$).
\end{remark}

As above, by considering certain forms over certain fields, we can answer \cite[Question 6.1]{BGBM} in the negative. In \cite[Corollary 3.14]{BGBM}, it is shown that all integers $2^k$ (respectively $\frac {2^{2k}+2}{3}$ and $\frac {2^{2k+1}+1}{3}$) belong to $\{s_{q}(K)\mid K/F \textrm{ field extension}\}$ for all forms $q$ over $F$ of dimension $1$ or $2$ (respectively $3$) such that $s_q(F)=\infty$ (we note that these values can be recovered by invoking Proposition~\ref{sfigure}). For $n\in\mathbb{N}\setminus \{ 0\}$, let $A_n^{\infty}=\{\text{forms }q\text{ over }F\mid \dim q=n\text{ and }s_q(F)=\infty\}$. Thus, in the spirit of \cite[Question 6.1]{BGBM}, we will also address the following related question.


%

\begin{question}\label{modified} What are the values of $\underset{q\in A_n^{\infty}}{\bigcap} \{s_q(K)\mid K/F\text{ field extension}\}$?
\end{question}

For all $n,m\in\mathbb{N}\setminus \{ 0\}$, the following example shows the existence of forms $q\in A_n^{\infty}$ such that $\< 1\>\perp m\x q$ has maximal splitting.

\begin{example}\label{nones} As $s_q(F)=\infty$ for all $q\in A_n^{\infty}$, we have that $\sub_q(F)=\infty$ for all $q\in A_n^{\infty}$. Hence, we can conclude that $F$ is formally real. Thus, $n\x\< 1\>\in A_n^{\infty}$ for all $n\in\mathbb{N}\setminus \{ 0\}$. Moreover, for all $m\in\mathbb{N}\setminus \{ 0\}$, the form $\< 1\>\perp m\x(n\x\< 1\>)$ is a Pfister neighbour of $2^r\x\< 1\>$, for $2^{r-1}\leqs mn<2^r$, whereby it has maximal splitting.
\end{example}

Thus, invoking our earlier results, we have the following answer to Question~\ref{modified}:

\begin{prop}\label{mset} A positive integer $m$ is in $\underset{q\in A_n^{\infty}}{\bigcap} \{s_q(K)\mid K/F\text{ field extension}\}$ if and only if $m=\left\lceil \frac {2^r} {\dim q}\right\rceil$ for some $r\in\mathbb{N}$.
\end{prop}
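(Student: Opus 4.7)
The plan is to test the intersection against the single well-chosen form $q_0\simeq n\x\<1\>$ exhibited in Example~\ref{nones}, which lies in $A_n^{\infty}$ whenever $A_n^{\infty}$ is non-empty and has the additional feature that $\<1\>\perp m\x q_0$ is a Pfister neighbour (hence has maximal splitting) for every $m\in\mathbb{N}\setminus\{0\}$. These two properties make Proposition~\ref{modmax} directly applicable to $q_0$ for every value of $m$.

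For the forward direction, suppose $m=\left\lceil\frac{2^r}{n}\right\rceil$ for some $r\in\mathbb{N}$. Given any $q\in A_n^{\infty}$, we have $\dim q=n$ and $s_q(F)=\infty$, so the inequality $m\leqs s_q(F)$ is automatic. Proposition~\ref{sfigure} then immediately yields $m\in\{s_q(K)\mid K/F\text{ field extension}\}$. Since this holds for every $q\in A_n^{\infty}$, we conclude that $m$ lies in the intersection.

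For the reverse direction, let $m$ be in the intersection. If $A_n^{\infty}$ is empty there is nothing to prove, so assume $A_n^{\infty}\neq\emptyset$; combining Proposition~\ref{sandsub} with Proposition~\ref{basicsub}~$(i)$ then forces $F$ to be formally real, and hence $q_0\simeq n\x\<1\>\in A_n^{\infty}$ by Example~\ref{nones}. Specialising the hypothesis to $q_0$ gives $m\in\{s_{q_0}(K)\mid K/F\text{ field extension}\}$. Since $s_{q_0}(F)=\infty$ we have $m<s_{q_0}(F)$, and since $\<1\>\perp m\x q_0$ has maximal splitting, Proposition~\ref{modmax} forces $m=\left\lceil\frac{2^r}{\dim q_0}\right\rceil=\left\lceil\frac{2^r}{n}\right\rceil$ for some $r\in\mathbb{N}$.

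No substantive obstacle arises: the argument is essentially an exercise in picking an appropriate witness form, and both directions fall out of the machinery already packaged in Propositions~\ref{sfigure} and~\ref{modmax}. The only judgement call is identifying $q_0\simeq n\x\<1\>$ as the correct witness, which is handed to us on a platter by Example~\ref{nones}.
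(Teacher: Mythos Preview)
Your proposal is correct and follows essentially the same approach as the paper: both directions rest on Proposition~\ref{sfigure} (to show each $m=\lceil 2^r/n\rceil$ lies in every $\{s_q(K)\}$) and on the witness $q_0\simeq n\x\<1\>$ from Example~\ref{nones} together with Proposition~\ref{modmax} (to rule out all other values of $m$). Your version is slightly more explicit about the empty case and about why $F$ must be formally real, but the argument is the same.
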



\begin{proof} Let $m\in\underset{q\in A_n^{\infty}}{\bigcap} \{s_q(K)\mid K/F\text{ field extension}\}$. Since $m=s_q(K)$ for some $K/F$ and $q\in A_n^{\infty}$ such that $\< 1\>\perp m\x q$ has maximal splitting, Proposition~\ref{modmax} implies that $m=\left\lceil \frac {2^r} {\dim q}\right\rceil$ for some $r\in\mathbb{N}$.

Conversely, invoking Proposition~\ref{sfigure}, we see that $m=\left\lceil \frac {2^r} {\dim q}\right\rceil$ belongs to the set $\underset{q\in A_n^{\infty}}{\bigcap} \{s_q(K)\mid K/F\text{ field extension}\}$.
\end{proof}

Whereas $\sub_q(F)\leqs s_q(F)+s_{-q}(F)-1$ for $q$ an anisotropic form over $F$, Remark~\ref{remsandsub} demonstrates that finiteness of $s_q(F)$ does not imply that of $\sub_q(F)$. Thus, in general, we cannot hope to invoke Theorem~\ref{i1bound} to show that certain integers do not belong to $\{s_q(K)\mid K/F \textrm{ field extension}\}$ (indeed, as per Theorem~\ref{svalues}, there exist anisotropic forms $q$ that can take any prescribed integer as their level over a suitable extension). For those forms $q$ such that finiteness of their level implies finiteness of their sublevel, one can attempt to establish an analogue of Theorem~\ref{inadmissiblesub}. In this regard, we formulate the following result with respect to forms that represent one.





\begin{prop}\label{inadmissibles} Let $q$ be an anisotropic form over F such that $1\in D_F(q)$ and $\sub_q(F)\geqs 2^n$ for some $n\in\mathbb{N}$. If $\dim q\leqs 2^{n-1}i_1(q)$, then for all integers $m\in\left( 2^n+1-\left(\frac {2^ni_1(q)+1} {\dim q}\right), 2^n\right)$ we have that $m\notin \{s_{q}(K)\mid K/F \textrm{ field extension}\}$.
\end{prop}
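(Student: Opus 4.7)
The plan is to mirror the proof of Theorem~\ref{inadmissiblesub}, replacing $m\x q$ and $(m+1)\x q$ there by $\<1\>\perp (m-1)\x q$ and $\<1\>\perp m\x q$ respectively. The key structural input is that $1\in D_F(q)$, which allows a decomposition $q\simeq\<1\>\perp q'$ and so an embedding $\<1\>\perp r\x q\subset 2^n\x q$ as a subform for every $r\leqs 2^n-1$. The condition $\sub_q(F)\geqs 2^n$ guarantees that $2^n\x q$ is anisotropic over $F$, while $\dim q\leqs 2^{n-1}i_1(q)$ forces $i_1(q)\geqs 1$ and hence $\dim q\geqs 2$, so that Theorem~\ref{i1bound} will be applicable.

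The argument would proceed as follows. Assume, toward a contradiction, that $s_q(K)=m$ for some extension $K/F$ and some integer $m$ in the stated open interval. Then $\<1\>\perp m\x q$ is isotropic over $K$, and via the embedding above this forces $2^n\x q\simeq(2^n\x\<1\>)\otimes q$ to be isotropic over $K$. Applying Theorem~\ref{i1bound} with the $n$-fold Pfister form $2^n\x\<1\>$ gives $i_W((2^n\x q)_K)\geqs i_1(2^n\x q)\geqs 2^n i_1(q)$. I would then apply Lemma~\ref{star} to the subform $\<1\>\perp (m-1)\x q\subset 2^n\x q$: clearing denominators in $m>2^n+1-\frac{2^n i_1(q)+1}{\dim q}$ and invoking integrality yields $(2^n-m+1)\dim q\leqs 2^n i_1(q)$, which is precisely the inequality $\dim(\<1\>\perp (m-1)\x q)\geqs\dim(2^n\x q)-i_W((2^n\x q)_K)+1$. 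Lemma~\ref{star} then gives isotropy of $\<1\>\perp (m-1)\x q$ over $K$, and hence $s_q(K)\leqs m-1$, contradicting $s_q(K)=m$.

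The main (and essentially only) delicate point is the integer-arithmetic translation of the open-interval condition on $m$ into the clean inequality that feeds Lemma~\ref{star}; beyond this, the proof is structurally identical to that of Theorem~\ref{inadmissiblesub}. The role of the extra hypothesis $\dim q\leqs 2^{n-1}i_1(q)$ appears to be to guarantee that the stated open interval has length at least one, so that it actually contains an integer and the conclusion is non-vacuous.
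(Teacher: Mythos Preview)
Your proposal is correct and follows essentially the same route as the paper's own proof: use $1\in D_F(q)$ to embed $\<1\>\perp m\x q$ (and $\<1\>\perp(m-1)\x q$) into $2^n\x q$, invoke Theorem~\ref{i1bound} to get $i_W((2^n\x q)_K)\geqs 2^n i_1(q)$, and then apply Lemma~\ref{star} to force isotropy of $\<1\>\perp(m-1)\x q$. Your reading of the hypothesis $\dim q\leqs 2^{n-1}i_1(q)$ as a non-vacuousness condition on the interval matches the paper's explanation as well.
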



\begin{proof} The condition that $\dim q\leqs 2^{n-1}i_1(q)$ guarantees the existence of at least one integer $m\in\left( 2^n+1-\left(\frac {2^ni_1(q)+1} {\dim q}\right), 2^n\right)$.

Let $m$ be an integer such that $m\in\left( 2^n+1-\left(\frac {2^ni_1(q)+1} {\dim q}\right), 2^n\right)$. Since $m\leqs 2^n-1$ and $1\in D_F(q)$, it follows that $\< 1\>\perp m\x q\subset 2^n\x q$. As $2^n\leqs \sub_q(F)$, we have that $2^n\x q$ is anisotropic over $F$.  Let $K/F$ be such that $\< 1\>\perp m\x q$ is isotropic over $K$, whereby $2^n\x q$ is isotropic over $K$. Since $2^n\x q\simeq (2^n\x\< 1\>)\otimes q$, we have that $i_W((2^n\x q)_K)\geqs 2^ni_1(q)$ by Theorem~\ref{i1bound}. Since $m> 2^n+1-\left(\frac {2^ni_1(q)+1} {\dim q}\right)$, we have that $\dim(\< 1\>\perp (m-1)\x q)> 2^n\dim q-2^n i_1(q)$. Hence, Lemma~\ref{star} implies that $\< 1\>\perp (m-1)\x q$ is isotropic over $K$, whereby $s_q(K)\leqs m$.
\end{proof}

As was the case with Theorem~\ref{inadmissiblesub}, through the consideration of such forms with maximal splitting and such forms of dimension a power of two, one can show that the interval in the above result cannot be extended in either direction.

\section{The weak-isotropy index of round and Pfister forms}

In \cite[Proposition $4.1$]{BGBM}, it was shown that $\sub_q(F)=s_q(F)$ for $q$ an anisotropic group form over $F$. Thus, in our considerations of Pfister forms $q$, we will state all results in terms of $s_q(F)$, bearing in mind that the same statements hold for $\sub_q(F)$.

In \cite[Proposition $4.3$]{BGBM}, it was shown that the $s_q(F)$ is either a power of two or is infinite in the case where $q$ is a round form over $F$. Furthermore, in the case where $q$ is a Pfister form over $F$, the following result \cite[Theorem $4.4$]{BGBM} was established.
 
\begin{theorem}\label{pfisters} (Berhuy, Grenier-Boley, Mahmoudi) If $q$ is an anisotropic Pfister form over $F$, then $\{s_q(K)\mid K/F \textrm{ field extension}\}=\{1,\ldots ,2^{i},\ldots ,s_q(F)\}$.
\end{theorem}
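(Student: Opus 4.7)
The plan is to establish the two inclusions separately, with each being a clean consequence of results already available by this point in the paper. The essential observation is that, since $q$ is an $n$-fold Pfister form, $\dim q = 2^n$, so $q$ sits precisely within the hypothesis of Corollary~\ref{2powers}. Moreover, by \cite[Proposition $4.1$]{BGBM}, $\sub_q(K) = s_q(K)$ for any extension $K/F$, so results about the $q$-sublevel translate directly.

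For the forward inclusion $\{1,\ldots,2^i,\ldots,s_q(F)\} \subseteq \{s_q(K)\mid K/F \text{ field extension}\}$, I would apply Corollary~\ref{2powers} directly: it yields $2^k \in \{s_q(K)\mid K/F \text{ field extension}\}$ for every $k \geqs 0$ with $2^k \leqs s_q(F)$. When $s_q(F)$ is finite, \cite[Proposition $4.3$]{BGBM} forces it to be of the form $2^r$, so Corollary~\ref{2powers} covers all of $\{1,2,\ldots,2^r\}$; when $s_q(F) = \infty$, it covers every power of two. The endpoint $s_q(F)$ itself is realised trivially by $K = F$.

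For the reverse inclusion, let $K/F$ be an arbitrary field extension. The bound $s_q(K) \leqs s_q(F)$ is immediate, since the isotropy of $\langle 1\rangle \perp s_q(F) \times q$ over $F$ is preserved under base change. To pin $s_q(K)$ down to a power of two, I would split into two cases. If $q_K$ is isotropic, then as a Pfister form it is hyperbolic by \cite[Theorem X.$1.7$]{LAM}, hence universal, so $-1 \in D_K(q)$ and $s_q(K) = 1 = 2^0$. Otherwise $q_K$ is an anisotropic Pfister form over $K$, so \cite[Proposition $4.3$]{BGBM} forces $s_q(K)$ to be a power of two (the infinite case cannot arise when $s_q(F)$ is finite, and when $s_q(F) = \infty$ it is accommodated by the right-hand side, with $K = F$).

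The theorem is thus essentially the marriage of two ingredients: Corollary~\ref{2powers}, which furnishes every power of two below $s_q(F)$ as an attained $q$-level, and \cite[Proposition $4.3$]{BGBM}, which constrains all attained values to lie among the powers of two. I do not foresee a significant obstacle, since the substantive work has been carried out earlier, in Corollary~\ref{2powers} and ultimately in Theorem~\ref{H95}; the present statement is essentially a consolidation.
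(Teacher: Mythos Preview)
Your proof is correct. For the forward inclusion, your use of Corollary~\ref{2powers} is effectively what the paper does as well: both routes pass through Proposition~\ref{hs} and ultimately Theorem~\ref{H95}. For the reverse inclusion, however, you take a genuinely different path. The paper (in Remark~\ref{6.1rec}) observes that for a Pfister form $q$ the form $\langle 1\rangle\perp m\times q$ is a Pfister neighbour for every $m$, hence has maximal splitting, and then invokes Proposition~\ref{modmax} (the reference to Proposition~\ref{submodmax} in the remark is a small slip) to obtain both inclusions at once via its ``if and only if''. The ``only if'' direction there works by reading off $i_1(\langle 1\rangle\perp m\times q)$ from maximal splitting and applying Lemma~\ref{star}. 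You instead note that $q_K$ remains a Pfister form over every extension $K$ and apply \cite[Proposition~$4.3$]{BGBM} directly over $K$. Your argument is cleaner and more conceptual---it exploits the stability of the Pfister property under base change rather than an explicit Witt-index computation---while the paper's route showcases the maximal-splitting criterion developed earlier and keeps the argument internal to its own toolkit. One minor quibble: your opening remark that $\sub_q(K)=s_q(K)$ for every extension $K$ fails when $q_K$ is isotropic (then $\sub_q(K)=0$ but $s_q(K)=1$), though this equality plays no role in your actual argument.
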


\begin{remark}\label{6.1rec} As before, if $q$ is an anisotropic $n$-fold Pfister form over $F$ for $n\geqs 0$, then for all $m\in\mathbb{N}$ we have that $\< 1\>\perp m\x q$ is a Pfister neighbour, and thus has maximal splitting. Hence, the above result can be recovered by invoking Proposition~\ref{submodmax}. 
\end{remark}

In \cite[Remark $4.5$]{BGBM}, the question of whether Theorem~\ref{pfisters} holds for round forms was considered. We next show that this is not the case.



\begin{example} Consider the round form $q\simeq\< 1,1,1\>$ over $F=\mathbb{R}$. Theorem~\ref{H95} implies that $\< 1\>\perp 2\x q$ is anisotropic over $F(\< 1\>\perp 3\x q)$, whereby we can conclude that $3\in\{s_q(K)\mid K/F \textrm{ field extension}\}$. Moreover, letting $K/F$ be such that $\< 1\>\perp 4\x q$ is isotropic over $K$, it follows that the Pfister form $16\x \< 1\>$ is hyperbolic over $K$. Hence, Lemma~\ref{star} implies that $\< 1\>\perp 3\x q$ is isotropic over $K$, and thus that $4\notin\{s_q(K)\mid K/F \textrm{ field extension}\}$. Since group forms do not necessarily remain group forms after passing to a field extension, one must differentiate between $\sub_q(K)$ and $s_q(K)$ for a general extension $K/F$. By employing the same arguments as above however, one can establish that $4\notin\{\sub_q(K)\mid K/F \textrm{ field extension}\}$ and $5\in\{\sub_q(K)\mid K/F \textrm{ field extension}\}$.
\end{example}


For $\tau$ a neighbour of a Pfister form $q$ over $F$, it was shown in \cite[Corollary $4.6$]{BGBM} that $s_q(F)\leqs s_{\tau}(F)\leqs 2s_q(F)$. As a Pfister neighbour is not a group form in general, its sublevel need not equal its level, and thus the value of its sublevel merits a separate treatment. In this regard, we can establish the following relations between the sublevel of a Pfister neighbour and that of its associated Pfister form.




\begin{prop}\label{subPN} Let $q$ be an anisotropic Pfister form over $F$ and let $\tau$ be a neighbour of $q$. Then $\sub_{q}(F)\leqs \sub_{\tau}(F)\leqs \left\lfloor\frac {\sub_{q}(F)\dim q} {\dim\tau}\right\rfloor$.
\end{prop}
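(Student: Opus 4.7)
The plan is to prove the two inequalities separately. For the lower bound $\sub_q(F) \leq \sub_\tau(F)$: since $\tau$ is a neighbour of $q$, we have $\tau \subset aq$ for some $a \in F^\times$. Applying Proposition~\ref{basicsub}~(ii) to this inclusion yields $\sub_{aq}(F) \leq \sub_\tau(F)$, and scaling-invariance of the sublevel (noted immediately after Proposition~\ref{basicsub}) gives $\sub_{aq}(F) = \sub_q(F)$, establishing the first inequality.

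For the upper bound $\sub_\tau(F) \leq \lfloor \sub_q(F)\dim q / \dim \tau \rfloor$, set $s = \sub_q(F)$. The case $s = \infty$ is vacuous, so assume $s < \infty$. Since $q$ is an anisotropic Pfister form (hence a round group form), the results of Berhuy--Grenier-Boley--Mahmoudi recalled at the start of Section~6 give that $\sub_q(F) = s_q(F) = 2^k$ for some $k \geqs 0$. Let $n = \lfloor s \dim q / \dim \tau \rfloor$; the constraints $\dim \tau \leqs \dim q$ and $\dim \tau > \tfrac{1}{2} \dim q$ force $s \leqs n \leqs 2s - 1$, so that $n + 1 \in [s+1,\, 2s]$. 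The goal is to show that $(n+1)\x \tau$ is isotropic over $F$.

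The crucial step is to compute $i_W((n+1)\x aq)$ exactly by embedding it into the Pfister form $\Pi := (2^{k+1}\x \< 1\>) \otimes aq \simeq (2s)\x aq$. Since $2s \geqs s+1$, the form $\Pi$ is isotropic and, being a Pfister form, is hyperbolic. The decomposition $\Pi \simeq (n+1)\x aq \perp (2s-n-1)\x aq$ has anisotropic second summand, since $2s-n-1 \leqs s-1 < s$ means it is a subform of the anisotropic form $s\x aq$. Hyperbolicity of $\Pi$ then forces the Witt-ring identity $[(n+1)\x aq] = -[(2s-n-1)\x aq]$, and since the negative of an anisotropic form is still anisotropic, the anisotropic part of $(n+1)\x aq$ is isometric to $-(2s-n-1)\x aq$, of dimension $(2s-n-1)\dim q$. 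Hence $i_W((n+1)\x aq) = (n+1-s)\dim q$.

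To finish, the defining inequality $n+1 > s\dim q/\dim \tau$ gives $(n+1)\dim \tau > s\dim q$, which rearranges precisely to $(n+1)\dim \tau \geqs \dim((n+1)\x aq) - i_W((n+1)\x aq) + 1$. Lemma~\ref{star} applied to the subform inclusion $(n+1)\x \tau \subset (n+1)\x aq$ then yields that $(n+1)\x \tau$ is isotropic, so $\sub_\tau(F) \leqs n$. The main obstacle is obtaining the exact Witt-index formula for $(n+1)\x aq$: the generic bounds from Theorem~\ref{WS} or Theorem~\ref{i1bound} would only give $i_W((n+1)\x aq) \geqs \dim q$, which falls well short of what is needed; it is essential to exploit the power-of-two structure of $s$ for a Pfister form, which makes $(2s)\x aq$ itself a Pfister form and hence hyperbolic, so that Witt cancellation against an anisotropic complement pins down the anisotropic part of $(n+1)\x aq$ exactly.
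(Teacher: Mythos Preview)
Your proof is correct and follows essentially the same approach as the paper: both arguments exploit that $\sub_q(F)=2^k$ for a Pfister form $q$, so that $(2s)\times q$ is a hyperbolic Pfister form, and then apply Lemma~\ref{star} to the subform $(n+1)\times\tau$. The paper's version is marginally more direct in that it embeds $(n+1)\times\tau$ straight into the hyperbolic form $(2s)\times q$ (with $i_W = s\dim q$), thereby bypassing your intermediate computation of $i_W((n+1)\times aq)$; the numerical condition $(n+1)\dim\tau \geqs s\dim q + 1$ needed for Lemma~\ref{star} is identical either way.
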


\begin{proof} Proposition~\ref{basicsub}~$(ii)$ implies that $\sub_{q}(F)\leqs \sub_{\tau}(F)$. 

In order to prove the remaining inequality, we may assume that $\sub_{q}(F)<\infty$, whereby \cite[Proposition $4.3$]{BGBM} implies that $\sub_{q}(F)=2^n$ for some $n\in\mathbb{N}$. Thus, the Pfister form $2^{n+1}\x q$ is hyperbolic over $F$. Since $\left\lfloor\frac {\sub_{q}(F)\dim q} {\dim\tau}\right\rfloor\leqs 2^{n+1}-1$, we have that $\left(\left\lfloor\frac {\sub_{q}(F)\dim q} {\dim\tau}\right\rfloor +1\right)\x\tau\subset 2^{n+1}\x q$. Since $\left\lfloor\frac {\sub_{q}(F)\dim q} {\dim\tau}\right\rfloor =\left\lceil\frac {\sub_{q}(F)\dim q +1} {\dim\tau}\right\rceil-1$, we moreover have that $\dim\left(\left(\left\lfloor\frac {\sub_{q}(F)\dim q} {\dim\tau}\right\rfloor +1\right)\x\tau\right) \geqs\sub_{q}(F)\dim q +1=2^n\dim q+1$, whereby Lemma~\ref{star} implies that $\vartheta$ is isotropic, establishing the result.
\end{proof}

Clearly, the above bounds are attained in the case where $\tau$ is similar to $q$.

As a corollary of the above result, we can make the following statement regarding the level of a Pfister neighbour. 

\begin{cor}\label{sPN} Let $q$ be an anisotropic Pfister form over $F$ and let $\tau$ be a neighbour of $q$. Then $s_q(F)\leqs s_{\tau}(F)\leqs \left\lceil\frac {s_{q}(F)\dim q+1} {\dim\tau}\right\rceil$.
\end{cor}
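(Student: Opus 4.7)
My plan is to derive Corollary~\ref{sPN} directly from Proposition~\ref{subPN} by combining it with the general level/sublevel inequality and an arithmetic conversion between floors and ceilings. The lower bound $s_q(F)\leqs s_\tau(F)$ is already on record, having been established in \cite[Corollary $4.6$]{BGBM} (which the paper cites just before stating Proposition~\ref{subPN}), so my attention is focused entirely on the new upper bound.

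The key observation is that Proposition~\ref{subPN} is phrased in terms of sublevels, whereas the desired conclusion is phrased in terms of levels; passing between the two is routine because $q$ is a group form and $\tau$ has finite sublevel once $s_q(F)$ is finite. If $s_q(F)=\infty$ the bound is vacuous, so assume $s_q(F)<\infty$. Since $q$ is an anisotropic Pfister (in particular group) form, \cite[Proposition $4.1$]{BGBM} gives $\sub_q(F)=s_q(F)$, and then Proposition~\ref{subPN} yields $\sub_\tau(F)\leqs\left\lfloor s_q(F)\dim q/\dim\tau\right\rfloor$, which is finite. Hence the general inequality $s_\alpha(F)\leqs\sub_\alpha(F)+1$ (noted at the start of Section~$3$, following \cite[Lemma~$3.1~(7)$]{BGBM}) applies to $\tau$, giving $s_\tau(F)\leqs\left\lfloor s_q(F)\dim q/\dim\tau\right\rfloor+1$.

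To convert this to the form stated in the corollary, I would invoke the identity $\lceil n/m\rceil=\lfloor (n-1)/m\rfloor+1$ recorded in the preliminaries, applied with $n=s_q(F)\dim q+1$ and $m=\dim\tau$. This rewrites $\left\lfloor s_q(F)\dim q/\dim\tau\right\rfloor+1$ as $\left\lceil(s_q(F)\dim q+1)/\dim\tau\right\rceil$, producing the bound $s_\tau(F)\leqs\left\lceil(s_q(F)\dim q+1)/\dim\tau\right\rceil$ exactly as claimed.

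I do not anticipate any real obstacle: the whole argument is a short chain of previously established inequalities followed by a standard floor/ceiling manipulation. The only minor bookkeeping point is the finiteness reduction at the outset, needed before invoking $s_\alpha(F)\leqs\sub_\alpha(F)+1$, but this is handled immediately by dispensing with the trivial case $s_q(F)=\infty$.
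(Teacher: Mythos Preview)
Your proposal is correct and follows essentially the same route as the paper's proof: cite \cite[Corollary $4.6$]{BGBM} for the lower bound, combine $s_\tau(F)\leqs\sub_\tau(F)+1$ with Proposition~\ref{subPN} and the equality $\sub_q(F)=s_q(F)$ for Pfister $q$, and convert floors to ceilings via the identity $\lceil n/m\rceil=\lfloor(n-1)/m\rfloor+1$. Your explicit handling of the case $s_q(F)=\infty$ is a small point of extra care not spelled out in the paper's version.
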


\begin{proof} Given \cite[Corollary $4.6$]{BGBM}, we need only establish the upper bound on $s_{\tau}(F)$. Since $s_{\tau}(F)\leqs \sub_{\tau}(F)+1$, Proposition~\ref{subPN} implies that $s_q(F)\leqs s_{\tau}(F)\leqs \left\lfloor\frac {\sub_{q}(F)\dim q} {\dim\tau}\right\rfloor+1$. Since $\left\lfloor\frac {\sub_{q}(F)\dim q} {\dim\tau}\right\rfloor = \left\lceil\frac {s_{q}(F)\dim q+1} {\dim\tau}\right\rceil -1$, the result follows.
\end{proof}

Clearly, the lower bound in the above result is attained in the case where $\tau$ is similar to $q$. As per \cite[Example $4.7$]{BGBM}, for $p\neq 2$ a prime number and $F=\mathbb{Q}_p$, the upper bound can be attained by letting $q$ be the unique anisotropic $4$-dimensional form over $F$ with pure subform $\tau$, whereby $s_{\tau}(F)=2$ and $s_q(F)=1$.

In \cite[Lemma $4.8$]{BGBM}, for $q$ a Pfister form over $F$, the following lower bound on the $q$-level over a quadratic extension of $F$ was established. Addressing \cite[Remark $4.23$]{BGBM}, we show that this result holds for round forms. Additionally, we establish a related upper bound.



%

\begin{prop}\label{4.8} Let $q$ be an anisotropic round form over $F$ and let $K=F(\sqrt{d})$ be a quadratic field extension of $F$. Then we have that $s_q(K)\leqs\ell_{q}(-d)\leqs 2s_{q}(K)$.
\end{prop}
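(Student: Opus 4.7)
The plan is to establish the two inequalities $s_q(K) \leqs \ell_q(-d)$ and $\ell_q(-d) \leqs 2 s_q(K)$ separately.

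The lower bound is immediate. If $\ell_q(-d) = n$, write $-d = \sum_{i=1}^n q(y_i)$ with $y_i \in F^{\dim q}$, and set $z_i := y_i/\sqrt{d} \in K^{\dim q}$. Then $\sum q(z_i) = d^{-1}\sum q(y_i) = -1$ over $K$, whence $s_q(K) \leqs n = \ell_q(-d)$. This step uses no special property of $q$.

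For the upper bound, set $m = s_q(K)$ and fix an isotropy witness of $\< 1 \> \perp m \x q$ over $K$. Writing the coordinates of the witness in the $F$-basis $\{1,\sqrt{d}\}$ and separating the real and $\sqrt{d}$-parts, I would extract an identity $-1 = U + dV$ over $F$ with $U, V \in D_F(m \x q) \cup \{0\}$, together with an orthogonality relation coming from the vanishing of the $\sqrt{d}$-coefficient. The argument then splits on whether $V$ vanishes.

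If $V = 0$, then $U = -1 \in D_F(m \x q)$, so $s_q(F) \leqs m$. Since $q$ is round, \cite[Proposition $4.1$]{BGBM} gives $\sub_q(F) = s_q(F)$, and Proposition~\ref{subpq} then yields $p_q(F) \leqs \sub_q(F) + 1 \leqs m + 1$. By Proposition~\ref{sandsub} the form $p_q(F) \x q$ is universal, so $-d$ has finite $q$-length and $\ell_q(-d) \leqs p_q(F) \leqs m + 1 \leqs 2m$, using that $m \geqs 1$ (which holds as $q$ is anisotropic). If $V \ne 0$, the strategy is to manipulate $-1 = U + dV$ by exploiting the roundness of $q$: the inclusion $F^{\times 2} \subseteq D_F(q)$ (arbitrary squares are $q$-values), the stability of $D_F(m \x q)$ under multiplication by $D_F(q)$, and the fact that $\< 1, -d \> \ox q$ is round by Witt's Round Form Theorem. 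The target is a representation of $-d$ as a sum of at most $2m$ values of $q$.

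The principal obstacle is this last step. The direct manipulation, multiplying $-1 = U + dV$ by $-d$ to obtain $-d = dU + d^2V$, immediately writes $d^2V = \sum q(db_i)$ as $m$ values of $q$, but leaves $dU = \sum d q(a_i)$ as a sum of values of the twisted form $dq$, which is isometric to $q$ precisely when $d \in D_F(q)$---a condition not available in general. To bridge this gap, I plan to combine the identity with the orthogonality relation and to pass through the round form $\< 1, -d \> \ox q$ (hyperbolic over $K$), invoking Theorem~\ref{WS}, together with the observation that squares of elements of $D_F(m \x q)$ lie in $D_F(q)$ (since $F^{\times 2} \subseteq D_F(q)$), so as ultimately to place $-d$ inside $D_F(2m \x q)$.
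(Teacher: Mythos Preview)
Your lower bound is fine, and your treatment of the case $V=0$ is correct (if more roundabout than necessary: once $-1\in D_F(m\x q)$ and $1\in D_F(q)$, the form $(m+1)\x q$ is isotropic, hence universal, and you are done).

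The genuine gap is the case $V\ne 0$. You have correctly diagnosed that the naive manipulation $-d=dU+d^2V$ fails because $dU$ lies in $D_F(m\x dq)$, not $D_F(m\x q)$, and you list several relevant ingredients (the orthogonality relation, Witt's Round Form Theorem, Theorem~\ref{WS}). But you never assemble them into an argument, and it is not at all clear how Theorem~\ref{WS} would be invoked here. As written, this case is a plan rather than a proof.

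The paper takes a cleaner route that bypasses the coordinate identity entirely. It splits instead on whether $s_q(K)=s_q(F)$ or $s_q(K)<s_q(F)$. In the second case, $\<1\>\perp m\x q$ is anisotropic over $F$ but isotropic over $K$, so the classical descent theorem for quadratic extensions (\cite[Theorem~VII.$3.1$]{LAM}) yields a subform $a\<1,-d\>\subset\<1\>\perp m\x q$ for some $a\in F^{\x}$. Embedding further into $2^{n+1}\x q$ (where $2^n\leqs m<2^{n+1}$) and using that this larger form is round by Witt's Round Form Theorem, the scalar $a$ can be absorbed: $\<1,-d\>\subset 2^{n+1}\x q$, giving $\ell_q(-d)\leqs 2^{n+1}\leqs 2m$. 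Note that this case split is essential: the descent theorem requires anisotropy over $F$, which your $V\ne0$ case does not guarantee.

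Your orthogonality relation $B(a,b)=0$ together with $(m\x q)(a)=-d\,(m\x q)(b)$ is in fact the computational content of the descent theorem, so you were close; but you would still need to pass to the round form $2^{n+1}\x q$ to dispose of the scalar, and you have not indicated how.
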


\begin{proof} As $q$ is a round form over $F$, we have that $1\in D_F(q)$. Hence, we have that $-d\in D_F(\ell_{q}(-d)\x q)$, and thus that $-1\in D_K(\ell_{q}(-d)\x q)$. Thus, $s_q(K)\leqs\ell_{q}(-d)$.


To establish the lower bound on $s_q(K)$, we will consider two cases: that where $s_q(K)=s_q(F)$, and that where $s_q(K)< s_q(F)$.

Since $1\in D_F(q)$, we note that $(s_q(K)+1)\x q$ is isotropic over $F$ (and hence universal) in the case where $s_q(K)=s_q(F)$. Thus, $\ell_q(-d)\leqs s_q(K) +1\leqs 2s_q(K)$. 

If $s_q(K)< s_q(F)$, then $\< 1\>\perp s_q(K)\x q$ is anisotropic over $F$ and isotropic over $K$. Thus, there exists $a\in F^{\x}$ such that $a\< 1,-d\>\subset \< 1\>\perp s_q(K)\x q$ by \cite[Theorem VII.$3.1$]{LAM}. Letting $s_q(K)=2^n+k$ for $0\leqs k<2^n$, we have that $a\< 1,-d\>\subset 2^{n+1}\x q$ since $1\in D_F(q)$. By Witt's Round Form Theorem, $2^{n+1}\x q$ is a round form. Hence, since $a\in D_F(2^{n+1}\x q)$, we can conclude that $\< 1,-d\>\subset a(2^{n+1}\x q)\simeq 2^{n+1}\x q$. Thus, $\ell_q(-d)\leqs 2^{n+1}\leqs 2s_q(K)$. 
\end{proof}





Thus, we have the following analogue of \cite[Proposition $4.10$]{BGBM} for round forms.




\begin{prop}\label{4.10} Let $q$ be an anisotropic round form over $F$ and let $d\in F$ be an element such that $\ell_{q}(-d)=n$. If $K=F(\sqrt{d})$, we have that $2^{r-1}\leqs s_q(K)< 2^{r+1}$, where $r$ is determined by $2^r\leqs n< 2^{r+1}$.
\end{prop}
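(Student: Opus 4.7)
The plan is to derive the statement as an immediate corollary of Proposition~\ref{4.8}. That result provides the two-sided estimate $s_q(K)\leqs\ell_q(-d)\leqs 2s_q(K)$, which rearranges to pin $s_q(K)$ to the band $\frac{1}{2}\ell_q(-d)\leqs s_q(K)\leqs \ell_q(-d)$, an interval whose endpoints differ by a multiplicative factor of exactly $2$. Since the dyadic interval $[2^{r-1},2^{r+1})$ appearing in the conclusion likewise has multiplicative width $4$ and is determined by the condition $2^r\leqs n<2^{r+1}$, the two estimates match up naturally.

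Substituting the hypothesis $\ell_q(-d)=n$ into the band yields $n/2\leqs s_q(K)\leqs n$. The upper bound of the conclusion then follows directly from $n<2^{r+1}$, which gives $s_q(K)\leqs n<2^{r+1}$. For the lower bound, I would combine $s_q(K)\geqs n/2$ with $n\geqs 2^r$ to obtain $s_q(K)\geqs 2^{r-1}$; no rounding subtlety intervenes since $2^{r-1}$ is already an integer.

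Since the statement is essentially just a re-expression of Proposition~\ref{4.8} on the dyadic scale, there is no genuine obstacle to overcome. The only conceptual point worth underscoring is that the factor-of-$2$ gap inherent in Proposition~\ref{4.8} is precisely what forces the conclusion to take the form of a dyadic band of width two powers rather than an exact determination of $s_q(K)$; any attempt to tighten the conclusion would require strengthening Proposition~\ref{4.8} itself.
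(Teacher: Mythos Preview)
Your proposal is correct and matches the paper's approach exactly: the paper simply states that the result follows as an immediate corollary of Proposition~\ref{4.8}, and you have spelled out precisely how the inequalities $n/2\leqs s_q(K)\leqs n$ combine with $2^r\leqs n<2^{r+1}$ to yield the desired bounds.
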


\begin{proof} This follows as an immediate corollary of Proposition~\ref{4.8}.
\end{proof}

In analogy with the $q$-length of $a\in F^{\x}$, for $q$ and $\varphi$ forms over $F$, we define the \emph{$q$-length of $\varphi$} to be $\ell_q(\varphi)=\min\{n\in\mathbb{N}\mid \varphi\subset n\x q\}$ if such numbers $n$ exist, and set it to be infinite otherwise. Returning to the case where $q$ is a Pfister form over $F$, we can strengthen the above results and establish these generalised statements with respect to arbitrary function fields of quadratic forms.


\begin{prop}\label{4.8Pfister} Let $q$ be an anisotropic Pfister form over $F$ and $\varphi$ an anisotropic form over $F$ such that $\dim\varphi\leqs (s_q(F))\dim q$. Then $s_{q}(F(\varphi))\leqs\ell_{q}(a\varphi)\leqs 2s_{q}(F(\varphi))$ for every $a\in D_F(\varphi)$.
\end{prop}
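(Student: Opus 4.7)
The plan is to follow the two-case structure of the proof of Proposition~\ref{4.8}, but with the quadratic subform theorem \cite[Theorem VII.$3.1$]{LAM} (which applies only to extensions $F(\sqrt{d})$) replaced by the subform theorem for Pfister forms, applied throughout to $2m\x q$, where $m:=s_{q}(F(\varphi))$.

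The lower bound is immediate: $\varphi$ is isotropic over $F(\varphi)$, so $a\varphi$ is isotropic and hence universal there, yielding $-1\in D_{F(\varphi)}(a\varphi)\subseteq D_{F(\varphi)}(\ell_{q}(a\varphi)\x q)$, whence $s_{q}(F(\varphi))\leqs\ell_{q}(a\varphi)$.

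For the upper bound, I may assume $m<\infty$. As $q$ is round, \cite[Proposition~$4.3$]{BGBM} gives $m=2^{n}$ for some $n\in\mathbb{N}$, making $2m\x q$ a Pfister form. In the case $m=s_{q}(F)$, the form $\<1\>\perp m\x q$ is isotropic over $F$, so the Pfister form $2m\x q$ is isotropic, and hence hyperbolic over $F$; the hypothesis $\dim\varphi\leqs s_{q}(F)\dim q=m\dim q$ is precisely what is required to embed $a\varphi$ as a subform of $2m\x q$. In the case $m<s_{q}(F)$, the form $\<1\>\perp m\x q$ is anisotropic over $F$ but isotropic over $F(\varphi)$; as a Pfister neighbor of $2m\x q$, this forces the anisotropic Pfister form $2m\x q$ to become hyperbolic over $F(\varphi)$. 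Theorem~\ref{H95} applied to $\<1\>\perp m\x q$ then yields $\dim\varphi\leqs 2m\dim q$, and the subform theorem for Pfister forms (cf.\ \cite[Chapter~$23$]{EKM}) produces $b\in F^{\x}$ with $b\varphi\subset 2m\x q$. To upgrade this to an arbitrary $a\in D_{F}(\varphi)$, I exploit the roundness of $2m\x q$: from $ab\in bD_{F}(\varphi)=D_{F}(b\varphi)\subseteq D_{F}(2m\x q)$ together with $b^{-2}\in D_{F}(2m\x q)$, the similarity group $D_{F}(2m\x q)$ contains $a/b$, so $(a/b)(2m\x q)\simeq 2m\x q$ and hence $a\varphi=(a/b)(b\varphi)\subset 2m\x q$. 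Thus $\ell_{q}(a\varphi)\leqs 2m$ in both cases.

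The principal obstacle is the appeal to the subform theorem for Pfister forms in the second case: Proposition~\ref{4.8} invoked \cite[Theorem VII.$3.1$]{LAM}, which is valid only for quadratic extensions, and extending to arbitrary function field extensions relies on this deeper embedding result together with the roundness-based upgrade to every $a\in D_F(\varphi)$.
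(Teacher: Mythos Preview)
Your proof is correct and follows essentially the same two-case structure as the paper's, with the subform theorem for Pfister forms playing the role of \cite[Theorem VII.$3.1$]{LAM} in Case~2. The only differences are cosmetic: the paper cites \cite[Theorem~X.$4.5$]{LAM} directly, which already yields $a\varphi\subset 2m\x q$ for \emph{every} $a\in D_F(\varphi)$ (so your roundness upgrade, while correct, is packaged into that citation), and your appeal to Theorem~\ref{H95} to bound $\dim\varphi\leqs 2m\dim q$ is redundant since the subform theorem delivers this inequality as a consequence rather than requiring it as a hypothesis.
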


\begin{proof}  Since $a\varphi\subset \ell_{q}(a\varphi)\x q$, the form $ \ell_{q}(a\varphi)\x q$ is isotropic over $F(\varphi)$, whereby $s_{q}(F(\varphi))\leqs\ell_{q}(a\varphi)$.

To establish the lower bound on $s_q(F(\varphi))$, we will separately consider the case where $s_{q}(F(\varphi))=s_q(F)$ and that where $s_q(F(\varphi))< s_q(F)$.

If $s_q(F(\varphi))= s_q(F)$, then the Pfister form $2s_{q}(F(\varphi))\x q$ is hyperbolic over $F$. Thus, for $a\in D_F(\varphi)$, we have that $a\varphi\subset a\varphi\perp -a\varphi\subset 2s_{q}(F(\varphi))\x q$ since $\dim\varphi\leqs (s_q(F))\dim q$, establishing the result in this case.

If $s_q(F(\varphi))< s_q(F)$ on the other hand, then we have that $2s_q(F(\varphi))\leqs s_q(F)$ by \cite[Proposition $4.3$]{BGBM}. As $2s_q(F(\varphi))\x q$ is a Pfister form, it follows that it is anisotropic over $F$, as otherwise Lemma~\ref{star} would imply that $\< 1\>\perp s_q(F(\varphi))\x q$ is isotropic over $F$, a contradiction in this case. Since $\< 1\>\perp s_q(F(\varphi))\x q$ is isotropic over $F(\varphi)$, it follows that $2s_q(F(\varphi))\x q$ becomes hyperbolic over $F(\varphi)$. Thus, invoking \cite[Theorem X.$4.5$]{LAM}, we have that $a\varphi\subset 2s_q(F(\varphi))\x q$ for every $a\in D_F(\varphi)$, establishing the result.
\end{proof}



The following example shows that the above bounds can be attained.

\begin{example} If $a\varphi\subset q$, then clearly $s_{q}(F(\varphi))=\ell_{q}(a\varphi)=1$. Next, let $F$ be a field of $q$-level at least two. If $a\varphi\simeq 2\x q$, then $\ell_{q}(a\varphi)=2$. Moreover, as the Pfister form $2\x q$ is hyperbolic over $F(\varphi)$ in this case, we have that $\< 1\>\perp q$ is isotropic over $F(\varphi)$ by Lemma~\ref{star}. Thus, we have that $\ell_{q}(a\varphi)= 2s_{q}(F(\varphi))$ for $a\varphi\simeq 2\x q$.
\end{example}



\begin{prop}\label{4.10Pfister} Let $q$ be an anisotropic Pfister form over $F$ and $\varphi$ an anisotropic form over $F$ such that $\dim\varphi\leqs (s_q(F))\dim q$. If $\ell_{q}(a\varphi)=n$ for some $a\in D_F(\varphi)$ where $2^{r}< n\leqs 2^{r+1}$, then $s_q(F(\varphi))=2^{r}$.
\end{prop}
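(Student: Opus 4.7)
My plan is to sandwich $s_q(F(\varphi))$ using Proposition~\ref{4.8Pfister}, invoke the power-of-two dichotomy for the $q$-level of a round form to reduce to two candidate values, and then exclude the larger of the two using the fact that $\varphi$ is isotropic over $F(\varphi)$. Write $K = F(\varphi)$ for brevity. Applying Proposition~\ref{4.8Pfister} to the hypothesis $\ell_q(a\varphi) = n$ yields $n/2 \leqs s_q(K) \leqs n$; in particular $s_q(K) \leqs n < \infty$.

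Since $q_K$ is a Pfister form and therefore round, \cite[Proposition $4.3$]{BGBM} forces $s_q(K)$ to be a power of two. Combined with $s_q(K) \geqs n/2 > 2^{r-1}$ and $s_q(K) \leqs n \leqs 2^{r+1}$, this leaves only $s_q(K) \in \{2^r, 2^{r+1}\}$. To rule out $s_q(K) = 2^{r+1}$, I would argue by contradiction. In that case $\<1\>\perp 2^r\x q$ is anisotropic over $K$, and I claim the Pfister form $2^{r+1}\x q \simeq \<1,1\>\otimes (2^r\x q)$ is anisotropic over $K$ as well. Indeed, otherwise it would be hyperbolic, and roundness of the Pfister form $2^r\x q$ would yield $-1 \in D_K(2^r\x q)$, contradicting the anisotropy of $\<1\>\perp 2^r\x q$. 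Since $n \leqs 2^{r+1}$, the inclusion $a\varphi \subset n\x q \subset 2^{r+1}\x q$ holds over $F$ and hence over $K$, exhibiting $a\varphi$ as a subform of a form anisotropic over $K$. Thus $\varphi$ is anisotropic over $K = F(\varphi)$, contradicting the standard fact that $\varphi$ is isotropic over its own function field (note that $n \geqs 2$ forces $\dim\varphi \geqs 2$, since $\dim\varphi = 1$ would give $a\varphi \simeq \<1\> \subset q$, whence $n = 1$).

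The main obstacle I anticipate is the step upgrading anisotropy of $\<1\>\perp 2^r\x q$ over $K$ to anisotropy of the larger Pfister form $2^{r+1}\x q$ over $K$. Once that is secured, the subform relation $a\varphi \subset 2^{r+1}\x q$ together with the isotropy of $\varphi$ over $F(\varphi)$ delivers the contradiction without further work.
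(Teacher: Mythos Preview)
Your proof is correct and uses essentially the same approach as the paper: Proposition~\ref{4.8Pfister} together with the $2$-power constraint from \cite[Proposition~4.3]{BGBM} for the lower bound, and the subform relation $a\varphi\subset 2^{r+1}\times q$ combined with the isotropy of $\varphi$ over $F(\varphi)$ for the upper bound. The only cosmetic differences are that you organize the upper bound as a contradiction (ruling out $s_q(K)=2^{r+1}$), which lets you bypass the paper's case split on $s_q(F)$, and you invoke roundness of $2^r\times q$ where the paper appeals to Lemma~\ref{star}.
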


\begin{proof} We will first prove that $s_q(F(\varphi))\leqs 2^{r}$. If $s_q(F)\leqs 2^{r}$, then this is clear. Hence, we may assume that $s_q(F)\geqs 2^{r+1}$. In this case, the Pfister form $2^{r+1}\x q$ is anisotropic over $F$, as otherwise Lemma~\ref{star} would imply that $\< 1\>\perp 2^{r}\x q$ is isotropic over $F$, a contradiction. Since $a\varphi\subset n\x q$ for some $a\in D_F(\varphi)$, we have that $2^{r+1}\x q$ is hyperbolic over $F(\varphi)$, and thus that $\< 1\>\perp 2^{r}\x q$ is isotropic over $F(\varphi)$ by Lemma~\ref{star}. Thus, we have that $s_q(F(\varphi))\leqs 2^{r}$.

Proposition~\ref{4.8Pfister} implies that $n\leqs 2s_{q}(F(\varphi))$. As $s_{q}(F(\varphi))$ is necessarily a $2$-power by \cite[Proposition $4.3$]{BGBM}, we can conclude that $2^{r+1}\leqs 2s_{q}(F(\varphi))$.

Combining the above, it follows that $s_q(F(\varphi))=2^{r}$.
\end{proof}

{\large \textbf{Acknowledgements.}} I gratefully acknowledge the support I received through an International Mobility Fellowship from the Irish Research Council, co-funded by Marie Curie Actions under FP7.



%

{


}


\begin{thebibliography}{BG-BM}
\bibitem[B]{B} {K. J. Becher}, Minimal weakly isotropic forms, {\em Mathematische Zeitschrift} {\boldmath $252$}, no. 1, 91--102 (2006).
\bibitem[BG-BM]{BGBM} {G. Berhuy, N. Grenier-Boley, M. G. Mahmoudi}, Sums of values represented by a quadratic form, Linear Algebraic Groups and Related Structures (preprint server) 447
(2011). 
\bibitem[EKM]{EKM} {R. Elman, N. A. Karpenko, A. S. Merkurjev},
The algebraic and geometric theory of quadratic forms, American Mathematical Society Colloquium Publications {\boldmath $56$}, 
{\em American Mathematical Society} (2008).
\bibitem[EL]{EL} {R. Elman, T. Y. Lam}, Pfister forms and K-theory of fields, {\em Journal of Algebra} {\boldmath $23$}, $181-213$ (1972).
\bibitem[ELW]{ELW} {R. Elman, T. Y. Lam, A. R. Wadsworth}, Orderings under field extensions,
{\em Journal f\"ur die reine und angewandte Mathematik} {\boldmath $306$}, $7-27$ (1979).
\bibitem[F]{F} {R. W. Fitzgerald}, Function fields of quadratic forms,
{\em Mathematische Zeitschrift} {\boldmath $178$}, $63-76$ (1981).
\bibitem[GS]{GS} {E. R. Gentile, D. B. Shapiro}, Conservative quadratic forms,
{\em Mathematische Zeitschrift} {\boldmath $163$}, $15-23$ (1978).
\bibitem[H3]{H3} {D. W. Hoffmann}, Isotropy of quadratic forms over the function field of a quadric,
{\em Mathematische Zeitschrift} {\boldmath $220$}, no. 3, $461-476$ (1995).
\bibitem[K]{K} {N. A. Karpenko}, On the first Witt index of quadratic forms, {\em Inventiones mathematicae} {\boldmath $153$},
$455-462$ (2003).
\bibitem[KM]{KM} {N. A. Karpenko, A. S. Merkurjev}, Essential dimension of quadrics, {\em Inventiones mathematicae} {\boldmath $153$},
$361-372$ (2003).
\bibitem[Kn]{Kn1} {M. Knebusch}, Generic Splitting of Quadratic Forms, I, {\em Proceedings of the London Mathematical Society} $(3)$
{\boldmath  $33$}, $65-93$ (1976).
\bibitem[L]{LAM} {T. Y. Lam},
Introduction to Quadratic Forms over Fields, {\em American Mathematical Society}
(2005).
\bibitem[P]{P} {A. Pfister}, Quadratic forms with applications to algebraic geometry and topology, {\em London
Math. Soc. Lect. Notes} {\boldmath $217$}, Cambridge University Press (1995).
\bibitem[R]{R} {S. Roussey}, Isotropie, corps de fonctions et \'equivalences birationnelles des
formes quadratiques, PhD thesis, Universit\'e de Franche-Comt\'e (2005).
\bibitem[WS]{WS} {A. R. Wadsworth, D. B. Shapiro}, On multiples of round and Pfister forms, {\em Mathematische Zeitschrift} {\boldmath $157$}, $53-62$ (1977).
\end{thebibliography}
\end{document}